\newtheorem{Th}{Theorem}
\newtheorem{Lem}[Th]{Lemma}
\newtheorem*{Lem*}{Lemma}
\newtheorem{Cor}[Th]{Corollary}
\newtheorem{Prop}[Th]{Proposition}
 \newtheorem*{Conj*}{Lefschetz standard conjecture $B(X)$}
\newtheorem*{theorem*}{Theorem}
 \theoremstyle{definition}
 \newtheorem{Ex}{Example}
 \newtheorem*{Prob*}{Problem}
 \newtheorem*{Ex*}{Example}
 \newtheorem*{Def*}{Definition}
 \newtheorem{Rem}[Th]{Remark}
\newtheorem{Def}[Th]{Definition}
\begin{document}
\setlength{\marginparwidth}{2cm}

\title[The Lefschetz standard conjectures for generalized Kummers]{\textbf{The Lefschetz standard conjectures for IHSMs of generalized Kummer deformation type in certain degrees}}

\author*[1]{\fnm{Josiah} \sur{Foster}}\email{jrfoster@math.umass.edu}

\affil*[1]{\orgdiv{Department of Mathematics}, \orgname{University of Massachusetts Amherst}, \orgaddress{\street{710 North Pleasant Street}, \city{Amherst},  \state{MA}, \postcode{01003}, \country{United States of America}}}

\abstract{For a projective $2n$-dimensional irreducible holomorphic symplectic manifold $Y$ of generalized Kummer deformation type and $j$ the smallest prime number dividing $n+1$, we prove the Lefschetz standard conjectures in degrees $<2(n+1)(j-1)/j$. We show that the restriction homomorphism from the cohomology of a projective deformation of a moduli space of Gieseker-stable sheaves on an Abelian surface to the cohomology of $Y$ is surjective in these degrees. An immediate corollary is that the Lefschetz standard conjectures hold for $Y$ when $n+1$ is prime. The proofs rely on Markman's description of the monodromy of generalized Kummer varieties and construction of a universal family of moduli spaces of sheaves, Verbitsky's theory of hyperholomorphic sheaves, and the cohomological decomposition theorem.}
\keywords{Generalized Kummer varieties, standard conjectures, algebraic cycles, hyperholomorphic sheaves}

\maketitle

\section{Introduction}

\subsection{Overview}
  Let $X$ be a smooth projective variety of dimension $n$ over $\mathbb{C}$, and let the cohomological operator $L$ be given by taking the cup product with $c_1(\mathcal{O}_X(1))$. By the Hard Lefschetz theorem, there is an isomorphism
$$
L^{n-k}: H^k(X, \mathbb{Q})\xrightarrow{\sim} H^{2n-k}(X, \mathbb{Q})
$$
for all $k\leq n$. The Lefschetz standard conjecture $B(X)$ (LSC) is the following:

\begin{Conj*}(\cite[p. 196]{Gr})\label{BX}
For each $k\leq n$, there exists an algebraic self-correspondence $[\mathcal{Z}]\in H^{2k}(X\times X, \mathbb{Q})$, arising from a codimension-$k$ cycle $\mathcal{Z}\in \mathrm{CH}^k(X\times X)$, such that the isomorphism 
$$
[\mathcal{Z}]^*: H^{2n-k}(X, \mathbb{Q})\xrightarrow{\sim} H^{k}(X, \mathbb{Q}),
$$
is the inverse of the isomorphism $L^{n-k}$.
\end{Conj*}

If the correspondence $[\mathcal{Z}]$ exists for a particular $k\leq n$, the Lefschetz standard conjecture $B(X)$ is said to hold for $X$ in degree $k$, and the Lefschetz standard conjecture $B(X)$ is said to hold for $X$ if it holds in all degrees. Over $\mathbb{C}$, the standard conjecture $B(X)$ implies all the standard conjectures (see \cite{K1} and \cite{K2} for a detailed background discussion). We note also that the statement of the LSC is independent of the choice of polarization on $X$ (\cite[Proposition 4.3(1)]{K1}).

The main objective of this article is to prove the following:
\begin{Th}(Theorem \ref{proof of main theorem})\label{maintheorem}
    Let $Y$ be a projective irreducible holomorphic symplectic manifold (IHSM) of generalized Kummer deformation type of dimension $2n$. Let $j$ be the smallest prime number dividing $n+1$. Then the Lefschetz standard conjectures hold for $Y$ in degrees $<2(n+1)(j-1)/j$.
\end{Th}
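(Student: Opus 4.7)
The plan is to deduce the Lefschetz standard conjectures for $Y$ from an analogous statement on a higher-dimensional moduli-theoretic space, in which $Y$ appears as a fiber of a natural morphism. By Markman's description of the monodromy group and universal family for generalized Kummer deformation type, one first realizes $Y$ as a smooth projective fiber of (a deformation of) the Albanese-type morphism $M(v) \to A \times \hat{A}$, where $M(v)$ is a moduli space of Gieseker-stable sheaves of Mukai vector $v$ on an Abelian surface $A$. Markman's universal family construction then furnishes a projective deformation $\widetilde{M}$ of $M(v)$ together with a fibration one of whose fibers is $Y$, reducing the problem to producing enough algebraic self-correspondences on $\widetilde{M}$ and controlling their behavior under restriction to $Y \times Y$.

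To produce such correspondences on $\widetilde{M}$, the next step is to invoke Verbitsky's theory of hyperholomorphic sheaves. The (twisted) universal sheaf on $A \times M(v)$ is hyperholomorphic, so its Chern classes remain algebraic on every Kähler deformation of $M(v)$, and in particular on $\widetilde{M}$. Following the Charles--Markman approach to LSC for the K3 type, one would express the projectors onto graded pieces of $H^*(\widetilde{M})$ and the inverse of $L^{n-k}$ as polynomial expressions in these hyperholomorphic Chern classes and in pullbacks of cycle classes from $A \times \hat{A}$, thereby obtaining on $\widetilde{M}$ algebraic self-correspondences that realize LSC for $\widetilde{M}$ in all relevant degrees.

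The third and most delicate step is to show that the restriction map
\[
r^k : H^k(\widetilde{M}, \mathbb{Q}) \longrightarrow H^k(Y, \mathbb{Q})
\]
is surjective for $k < 2(n+1)(j-1)/j$. Once this is known, restricting the correspondences on $\widetilde{M} \times \widetilde{M}$ to $Y \times Y$ produces the algebraic correspondences required by LSC on $Y$, and the polarization-independence of LSC removes any compatibility concern coming from the fact that a polarization on $Y$ need not extend to $\widetilde{M}$. I would attack the surjectivity of $r^k$ by applying the cohomological decomposition theorem to the Albanese-type fibration and analyzing the action of Markman's monodromy group, which acts through a finite quotient whose order divides a power of $n+1$. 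The image of $r^k$ sits inside the invariants of a natural Künneth-type summand of $H^*(\widetilde{M})$, while the obstruction to surjectivity in degree $k$ is carried by nontrivial characters of cyclic subgroups of order $j$; below degree $2(n+1)(j-1)/j$ no such character can appear, by a weight-counting argument.

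The main obstacle is precisely this third step: identifying the correct summand of the decomposition and verifying that the threshold $2(n+1)(j-1)/j$ is exactly what the representation theory of the monodromy group forces. The arithmetic role of the smallest prime $j \mid n+1$ enters here in an essential way, and it is what ultimately produces the announced corollary that LSC holds in full whenever $n+1$ is prime. Secondary technical issues include making precise the family structure that connects $Y$ to $M(v)$ through a smooth projective base (to legitimately speak of a restriction map between cohomologies of different fibers) and showing that the hyperholomorphic Chern classes genuinely extend to the specific deformation $\widetilde{M}$ relevant for $Y$.
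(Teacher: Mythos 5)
Your overall architecture matches the paper's: realize $Y$ as an Albanese fiber inside a projective fiber of Markman's universal family $\Pi: \tilde{\mathcal{M}}\rightarrow \mathfrak{M}^0_{v^{\perp}}$, prove LSC for those ambient fibers via hyperholomorphic deformation and the Charles--Markman machinery, and reduce the threshold to surjectivity of the restriction $r^k$. But two of your concrete choices would fail as stated. First, the object you propose to deform in step 2 --- the (twisted) universal sheaf on $A\times \mathcal{M}(v)$ --- does not exist over a generic fiber of $\Pi$: projective fibers of $\Pi$ move in a four-dimensional family, while moduli spaces of sheaves only account for a three-dimensional family of polarized Abelian surfaces, so there is no Abelian surface (hence no universal sheaf) accompanying a general deformation. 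The paper instead deforms the relative extension sheaf $E^1 = \mathcal{E}xt^1_{\pi_{13}}(\pi_{12}^*\mathcal{E}, \pi_{23}^*\mathcal{E})$, which lives on $\mathcal{M}(v)\times\mathcal{M}(v)$, is slope-stable with monodromy-invariant $c_2(\mathcal{E}nd(E^1_F))$, hence hyperholomorphic; and since it deforms only as a \emph{twisted} sheaf, raw Chern classes are ambiguous and one must pass to the normalized classes $\theta_i([E^1])$, whose monodromy invariance (Proposition \ref{monodromy invariance of the theta class}) is exactly what makes the deformed classes algebraic (Proposition \ref{algebraicity of the theta class}).

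Second, in step 3 you apply the decomposition theorem to the Albanese-type fibration and to a finite quotient of the monodromy group; neither is the right object. The Albanese fibration is smooth and isotrivial, so the decomposition theorem applied to it sees nothing of the partition combinatorics; the paper applies it in G\"{o}ttsche--Soergel form to the semismall Hilbert--Chow contraction $b: Kum_n(A)\rightarrow W = \sigma^{-1}(0)$, stratified by partitions $\nu\in P(n+1)$. The finite group controlling $\mathrm{Im}(i^*)$ is not a monodromy quotient but the deformation-invariant group $\Gamma\cong A[n+1]$ of translation automorphisms acting trivially on $H^2$ (Lemma \ref{deformation invariant}); one has $\mathrm{Im}(i^*) = H^*(Kum_n(A), \mathbb{Q})^{\Gamma}$ because $\mathcal{M}(v)\cong (T\times Kum_a(v))/\Gamma_v$ (Lemma \ref{Markman 10.1(3)}). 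The threshold is then forced by partition arithmetic, not by characters of cyclic subgroups of order $j$: non-invariant summands in the equivariant isomorphism (\ref{GS equivariant}) carry regular representations of $A[d(\nu)]\cong (\mathbb{Z}/d(\nu)\mathbb{Z})^4$ and occur only when $d(\nu)>1$, which by Lemma \ref{smallest factor} forces $|\nu|\leq (n+1)/j$; the degree shift $i+2|\nu|\geq 0$ with $k = i+2n+2$ then yields $k\geq 2(n+1)(j-1)/j$. Finally, even granting surjectivity of $r^k$, restricting an LSC correspondence from the ambient space does not by itself verify LSC on $Y$: the restriction of $c_{2n+4}(\mathcal{E})$ is no longer the class of the diagonal, so one must separately check that the restricted self-adjoint correspondence surjects onto $\overline{H}^k(Y_t, \mathbb{Q})$ and that the Mukai pairing is nondegenerate on its image --- the paper gets this from Meachan's computation that $\Psi_K\Phi_K$ has kernel $\bigoplus_{i = 0}^{n+1}\mathcal{O}_{\Delta}[-2i]$, i.e. $(n+2)\mathrm{Id}$ on K-theory (Proposition \ref{phi_K pulls back the Mukai pairing}) --- before closing with Charles's criterion (Corollary \ref{CharlesCorollary}) and induction on degree; your plan of writing down the inverse of $L^{n-k}$ as a polynomial in Chern classes is not how that machinery concludes.
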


When $n+1$ is prime, $j = n+1$ and we have the following immediate corollary:

\begin{Cor}\label{maincorollary}
For $Y$ a projective IHSM of generalized Kummer deformation type of dimension $2n$ for which $n+1$ is prime, the Lefschetz standard conjectures hold for $Y$.
\end{Cor}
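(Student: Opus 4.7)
The plan is to specialize Theorem \ref{maintheorem} to the arithmetic case $j = n+1$ and then to handle the single remaining middle cohomological degree by hand.

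First, since $n+1$ is prime by hypothesis, its smallest prime divisor $j$ is $n+1$ itself, and so
\[
\frac{2(n+1)(j-1)}{j} \;=\; \frac{2(n+1)\,n}{n+1} \;=\; 2n,
\]
which equals the complex dimension of $Y$. Theorem \ref{maintheorem} therefore already furnishes algebraic correspondences realizing the inverse of $L^{2n-k}$ in every degree $k$ with $k < 2n$, i.e.\ for $k = 0, 1, \ldots, 2n-1$.

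It remains only to treat the middle degree $k = 2n$ of the $2n$-dimensional variety $Y$, since the Lefschetz standard conjecture asks for an algebraic self-correspondence for each $k \leq 2n$. But in this degree the operator $L^{2n-k} = L^0$ is simply the identity on $H^{2n}(Y, \mathbb{Q})$, and its inverse is manifestly realized by the class $[\Delta_Y] \in H^{4n}(Y \times Y, \mathbb{Q})$ of the diagonal, which is the cohomology class of an algebraic cycle. Combining this with the previous step yields the Lefschetz standard conjecture for $Y$ in all admissible degrees.

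There is no real obstacle in this argument: the corollary is the combination of a purely arithmetic specialization of the main theorem with the trivially handled middle-degree case. Its interest lies entirely in the resulting statement, namely that the full set of Lefschetz standard conjectures are unconditionally known for projective generalized-Kummer-type IHSMs of dimension $2n$ whenever $n+1$ is prime (covering, for instance, $n = 1, 2, 4, 6, 10, \ldots$).
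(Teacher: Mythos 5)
Your proposal is correct and matches the paper's own argument: the paper likewise notes that $j=n+1$ makes the bound $2(n+1)(j-1)/j$ equal to $2n$, and its closing remark handles the remaining middle degree $k=2n$ by taking $[\mathcal{Z}] = [\Delta]\in CH^*(Y\times Y)$, since $L^{0}$ is the identity there (the paper frames this via the $\Gamma$-regular representation spanning $H^{2n}(Y,\mathbb{Q})$, but the correspondence used is the same diagonal class). No gaps.
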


On the other extreme, when $n+1$ is even, $j=2$ and we have the following:

\begin{Cor}\label{othermaincorollary}
For $Y$ a projective IHSM of generalized Kummer deformation type of dimension $2n$ for which $n+1\geq 4$ is even, then the Lefschetz standard conjectures hold for $Y$ in degrees $< n+1$.
\end{Cor}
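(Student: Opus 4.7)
The plan is to derive the corollary as an immediate arithmetic specialization of Theorem \ref{maintheorem}, so no new machinery beyond the main theorem is required. The only step I need to carry out is to pin down the value of $j$ under the hypothesis that $n+1$ is even.

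Concretely, I would first note that by definition $j$ is the smallest prime dividing $n+1$, so if $n+1$ is even then $j=2$. Substituting this into the degree bound from Theorem \ref{maintheorem} yields
$$
\frac{2(n+1)(j-1)}{j} \;=\; \frac{2(n+1)\cdot 1}{2} \;=\; n+1,
$$
and the main theorem therefore asserts that the Lefschetz standard conjectures hold for $Y$ in all degrees strictly less than $n+1$. This is exactly the content of Corollary \ref{othermaincorollary}.

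There is no real obstacle in this deduction itself: the entire weight of the argument is carried by Theorem \ref{maintheorem}. The auxiliary hypothesis $n+1 \geq 4$ is included only to exclude the degenerate case $n=1$, in which $Y$ is a Kummer K3 surface; there the Lefschetz standard conjectures are already known unconditionally (they hold for any smooth projective surface), and the bound $k < n+1 = 2$ coming from Theorem \ref{maintheorem} would give no new information.
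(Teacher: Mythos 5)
Your proof is correct and takes essentially the same route as the paper, which states the corollary as an immediate specialization of Theorem \ref{maintheorem}: when $n+1$ is even the smallest prime divisor is $j=2$, and $2(n+1)(j-1)/j = n+1$. Your reading of the hypothesis $n+1\geq 4$ is also sound, since the only excluded even case is $n=1$, where $Kum_1(A)$ is a Kummer K3 surface for which the Lefschetz standard conjecture is classical and the bound $k<2$ would carry no new content.
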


If the LSC holds for a complex variety $X$, then the K\"{u}nneth components of the diagonal of $X\times X$ are algebraic, and homological and numerical equivalence coincide for algebraic cycles on $X$. The LSC is also essential for the theory of motives (see \cite[Chapter 5]{Andre}) and implies the variational Hodge conjecture (\cite[Theorem 7]{Voisin1}).
In \cite{Voisin2}, Voisin proves a series of results related to the LSC in degree $2$ for a projective irreducible holomorphic symplectic manifold that admits a Lagrangian fibration. Additionally, recent work of 
de Jong and Perry shows that the LSC in degree $2$ implies a weakened version of the period-index conjecture (\cite{dJP} Theorem 1.6). Consequently, Theorem \ref{maintheorem} adds projective IHSMs of generalized Kummer deformation type to the list of examples in \cite[Corollary 1.8]{dJP}.

In general, there are few examples for which the LSC is known to hold. It holds for varieties $X$ for which the cycle class map $Ch(X)\otimes \mathbb{Q}\rightarrow H^*(X, \mathbb{Q})$ is an isomorphism (e.g. Grassmanians, flag varieties, etc.), and it holds for Abelian varieties. The recent results of \cite{ACLS} demonstrate the LSC for IHSMs admitting a Lagrangian fibration and for particular examples of IHSMs of OG10-type. In \cite{Ar}, Arapura has proven the LSC for moduli spaces of stable vector bundles over a smooth projective curve and for the Hilbert scheme of points on any smooth projective surface, as well as for uniruled threefolds and unirational fourfolds. Furthermore, the result \cite[Corollary 7.9]{Ar} proves the LSC for the moduli space of Gieseker-stable sheaves $\mathcal{M}_H(v)$ on an Abelian surface $A$, where $v$ is a primitive, positive, and effective Mukai vector and $H$ is a $v$-generic polarization (see subsection \ref{KTheory} for definitions). A generalization of this last result (Theorem \ref{LSCforM}) is essential for the proof of Theorem \ref{maintheorem}. 

Additionally in \cite{CM}, Charles and Markman have proven the LSC for IHSMs of $K3^{[n]}$-deformation type. The following recent observation of Markman simplifies the proof of their main result:

\begin{Th}(cf. \cite[Theorem 1.5.]{Markman4}\label{Ma4, Theorem 1.5} and \cite[Section 1.2.]{Markman4}) 
Let $X$ and $Y$ be deformation equivalent irreducible holomorphic symplectic manifolds. If there exists a (twisted) derived equivalence $D^b(X)\rightarrow D^b(Y)$ with Fourier-Mukai kernel of nonzero rank, then the LSC holds for both $X$ and $Y$.
\end{Th}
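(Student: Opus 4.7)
The plan is to view the Fourier-Mukai kernel $\mathcal{E}$ as an algebraic correspondence whose nonzero rank allows it to serve as a bridge for transferring the LSC between $X$ and $Y$, while deforming to a case where the LSC is already established.

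I would begin by recording the key algebraicity statement: the (twisted, rationalized) Chern character $v(\mathcal{E}) = \mathrm{ch}(\mathcal{E})\sqrt{\mathrm{td}(X\times Y)} \in H^*(X\times Y, \mathbb{Q})$ is an algebraic class, and $v(\mathcal{E})$ induces an isomorphism $\Phi^H_\mathcal{E}: H^*(X,\mathbb{Q}) \xrightarrow{\sim} H^*(Y,\mathbb{Q})$ of graded rational vector spaces because $\Phi_\mathcal{E}$ is a derived equivalence. The nonzero rank hypothesis ensures that $v(\mathcal{E})$ has a nonzero leading component in $H^0(X) \otimes H^0(Y)$, so that $\Phi^H_\mathcal{E}$ intertwines the subalgebras generated by $H^2$ up to a controlled normalization. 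Composing with the analogous correspondence coming from the (algebraic) inverse kernel shows that the Künneth components of the diagonal of $X$, and symmetrically of $Y$, are already algebraic.

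Next, I would exploit the deformation equivalence hypothesis to connect $X$ to an IHSM for which the LSC is established. By Markman's universal family construction for moduli spaces of Gieseker-stable sheaves on K3 or Abelian surfaces, one can deform $X$ within its deformation class to such a moduli space $\mathcal{M}$, for which the LSC is known by Arapura's result cited above. Using the extendability of Fourier-Mukai equivalences over an appropriate deformation base, $\mathcal{E}$ deforms along with $X$ to a (possibly twisted) derived equivalence $D^b(\mathcal{M}) \to D^b(\mathcal{N})$, where $\mathcal{N}$ is a deformation of $Y$. The algebraicity and nondegeneracy of the deformed Chern character then transfers the LSC from $\mathcal{M}$ to $\mathcal{N}$, and the deformation invariance of the LSC for IHSMs of a fixed deformation type, together with the LLV-algebra action that makes this invariance meaningful on total cohomology, gives the LSC for $Y$; the symmetric argument gives the LSC for $X$.

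The main obstacle will be the deformation argument: one must rigorously extend the Fourier-Mukai equivalence $\Phi_\mathcal{E}$ over a family $\mathcal{X} \to B$ in such a way that both the algebraicity of the kernel and the nonzero-rank condition are preserved at the moduli-space fiber, a point which requires Markman's detailed analysis of the moduli of derived equivalences between IHSMs. A secondary technical difficulty is the twisted setting: one must track the Brauer classes consistently under deformation and verify that the rationalized twisted Chern character descends to an honest class in $\mathrm{CH}^*(X\times Y)_\mathbb{Q}$ via a Brauer-Severi uniformization, which is where the "nonzero rank" hypothesis enters most essentially to guarantee that the untwisting procedure produces a well-defined algebraic correspondence with the desired properties.
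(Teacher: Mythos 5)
Note first that the paper does not prove this statement at all: it is imported verbatim from \cite{Markman4} (Theorem 1.5 and the sketch in Section 1.2 there), so your attempt must be measured against Markman's argument, which is an adaptation of the machinery of \cite{CM} that this paper reworks in Sections \ref{correspondence}--\ref{LCS}. Against that standard, your proposal contains two load-bearing errors. First, a cohomological Fourier--Mukai transform is \emph{not} an isomorphism of \emph{graded} vector spaces; it only respects total (ungraded) cohomology and mixes degrees substantially. This is exactly why the genuine argument never works with $\Phi^H_{\mathcal{E}}$ degree by degree directly, but instead forms the self-adjoint composition $f = \phi\circ\psi$ of the correspondence with its adjoint, extracts graded pieces $f_i: H^{2d-i}\rightarrow H^i$, and proves only surjectivity onto the quotients $\overline{H}^i(X) = H^i(X)/[R^i(X)+Alg^i(X)]$, feeding this into Charles' criterion (Proposition \ref{Charles prop 8}, Corollary \ref{CharlesCorollary}) in an induction on the degree $i$; the degree-mixing terms are absorbed precisely because they land in $R^i+Alg^i$. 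Relatedly, your claim that composing the kernel with its inverse kernel shows the K\"{u}nneth components of the diagonal are algebraic is a non sequitur: the composition produces the \emph{full} diagonal class, and splitting off individual K\"{u}nneth components is (a weak form of) the standard conjecture $C(X)$, which is a \emph{consequence} of $B(X)$, not a route to it --- and $C(X)$ does not imply $B(X)$ in any case.

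Second, and more seriously, you invoke ``deformation invariance of the LSC for IHSMs of a fixed deformation type.'' No such theorem is known; it is essentially the open problem. Algebraicity of cycle classes is not preserved under deformation (the generic fiber of the universal family over $\mathfrak{M}^0_{\Lambda}$ is not even projective), and were your assertion true, the main theorem of \cite{CM} and Theorem \ref{maintheorem} of this paper would follow trivially from Arapura's results \cite{Ar}, which they manifestly do not. The entire role of the nonzero-rank hypothesis in the actual proof is to permit the ``untwisting'' that defines the $\kappa$- and $\theta$-classes of the kernel and to make the kernel (projectively) hyperholomorphic in Verbitsky's sense: one deforms the \emph{sheaf}, not the derived equivalence, along twistor paths, shows the relevant characteristic classes are monodromy invariant and hence remain of Hodge type, and then concludes they remain algebraic on every projective fiber (the analogues here are Propositions \ref{monodromy invariance of the theta class}, \ref{twistor family} and \ref{algebraicity of the theta class}). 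Your plan to rigorously extend the Fourier--Mukai \emph{equivalence} over the deformation base has no available general theory behind it and is not what the cited proof does. Your opening instinct --- nonzero rank enabling an untwisted algebraic characteristic class of the kernel --- is the correct mechanism, but as written the two steps above are gaps that the proposal cannot survive without being rebuilt around the $\overline{H}^i$-quotient induction and the hyperholomorphic deformation of the kernel.
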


It is not known in general whether for every IHSM $X$ of generalized Kummer deformation type there exists another IHSM $Y$ and a Fourier-Mukai kernel as in Theorem \ref{Ma4, Theorem 1.5}. Therefore, the results and methods found in \cite{CM} remain relevant for our present situation. However, we are optimistic that an expansion of the results in this article is possible via Theorem \ref{Ma4, Theorem 1.5}.

Let $Kum_n(A)$ denote the generalized Kummer variety associated to an Abelian surface $A$ (Definition \ref{kummer def}). Fix a lattice $\Lambda$ isometric to $H^2(Kum_n(A))$ with respect to the Beauville-Bogomolov-Fujiki and Mukai pairings (Remark \ref{BBF remark} and (\ref{cohomological Mukai pairing})) and fix a $\Lambda$-marking $$\eta: H^2(Kum_n(A)) \rightarrow \Lambda $$ (Definition \ref{marking}). The strategy of proof for Theorem \ref{maintheorem} utilizes Markman's construction in \cite{Markman1} and \cite{Markman6} of a universal family
$$
\mathcal{Y}\longrightarrow \mathfrak{M}_{\Lambda}^0
$$
 over a connected component of the moduli space of $\Lambda$-marked IHSMs (see Remark \ref{remark on moduli space}). One considers the five dimensional period domain $\Omega_{\Lambda}$ of generalized Kummer varieties, and the surjective and generically injective period map 
$$
Per: \mathfrak{M}^{0}_{\Lambda}\longrightarrow \Omega_{\Lambda}
$$
defined by sending a marked IHSM $(Y, \eta)$ to $\eta(H^{2,0}(Y))$. A key result in \cite{Markman1} yields that there exists a universal family $\mathcal{T}$ of complex tori  over the period domain $\Omega_{\Lambda}$. Pulling back along the period map $Per$ and taking the universal fiber product by the antidiagonal action (\ref{antidiagonal def}) of $\Gamma\cong A[n+1]$ provides a universal family of moduli spaces 
$$
\Tilde{\mathcal{M}}: = Per^*\mathcal{T}\times_{\underline{\Gamma}} \mathcal{Y}, 
$$
one fiber of which is the moduli space $\mathcal{M}_H(w)$ of Gieseker-stable sheaves on $A$ (see subsection \ref{Moduli Space}). The construction of the universal torus $\mathcal{T}$ is natural; there is a global isogeny between $Per^*\mathcal{T}$ and $Jac^3(\mathcal{Y})$, where the latter is the family of third intermediate Jacobians of generalized Kummer varieties \cite[Lemma 12.15]{Markman1}. We introduce these results in subsection \ref{universal deformation}. 

The main ingredients involved in proving Theorem \ref{maintheorem} are as follows: In subsection \ref{LSCM}, we prove the LSC for every projective fiber of the universal family
$$
\Pi: \Tilde{\mathcal{M}}\longrightarrow \mathfrak{M}_{\Lambda}^0 
$$
(\ref{universal family}), generalizing Arapura's result in \cite{Ar}. Furthermore, Yoshioka in \cite{Yoshioka} demonstrates that the Albanese variety of $\mathcal{M}_H(w)$ is isomorphic to $A\times \mathrm{Pic}^0(A)$ and that each fiber of the Albanese morphism
\begin{equation}\label{yoshioka albanese}
alb: \mathcal{M}_H(w)\longrightarrow A\times \mathrm{Pic}^0(A)
\end{equation}
is deformation equivalent to the generalized Kummer variety $Kum_n(A)$. We denote by $Kum_a(w)$ a fiber of the Albanese morphism (\ref{yoshioka albanese}) over $a\in A\times \mathrm{Pic}^0(A)$ and we let $$i:Kum_a(w)\hookrightarrow \mathcal{M}_H(w)$$ denote the natural inclusion morphism.
In subsection \ref{The decomposition theorem and proofs of the main results} the cohomological decomposition theorem and intersection cohomology are used to compute the degrees $i$ for which the composition of the adjoint Lefschetz correspondence with the pullback of the inclusion
$$
f: H^{2n+2-i}(\mathcal{M}_H(w), \mathbb{Q})\rightarrow H^{i}(\mathcal{M}_H(w), \mathbb{Q})\xrightarrow{i^*} H^{i}(Kum_a(w), \mathbb{Q})
$$ 
is surjective. The nondegeneracy of the Mukai pairing on the image of $f$, demonstrated in subsection \ref{restriction of the correspondence}, yields the Lefschetz standard
conjecture in these degrees $i$, hence this computation produces the degree condition in the statement of Theorem \ref{maintheorem}. 

The algebraic cycles satisfying the assumptions of the Lefschetz standard conjecture arise by deforming a particular coherent sheaf over generic twistor lines in $\mathfrak{M}_{\Lambda}^0$ (the notion of twistor lines in the moduli space of marked IHSMs is developed in \cite{Verbitsky2}). In subsection \ref{Moduli Space} we define special characteristic classes associated to these coherent sheaves. The coherent sheaves in question deform to twisted sheaves, as introduced by C\u{a}ld\u{a}raru in \cite{Caldararu}. However, intrinsic in the definition of the associated characteristic classes is an ``untwisting" designed to compensate and to ensure that the characteristic classes remain of Hodge-type along K\"{a}hler deformations in moduli. Verbitsky's theory of hyperholomorphic sheaves (\cite{Verbitsky1}, \cite{Verbitsky2}) plays a fundamental role in this strategy.

Let us give a brief outline of the organization of this article: In section \ref{Background} we collect some important background results. In particular, we relay Markman's description of the monodromy of generalized Kummer varieties and construction of a universal family of moduli spaces. In section \ref{correspondence} we construct an algebraic correspondence on every projective deformation of a moduli space of stable sheaves on an Abelian surface parametrized by the moduli space of marked IHSMs of generalized Kummer deformation type. In section \ref{LCS} we describe a decomposition of the cohomology of generalized Kummers with respect to a relative stratification and prove Theorem \ref{maintheorem}.

\section{Conventions}
\begin{enumerate}

    \item \textbf{Varieties.} Throughout this article, the term ``variety" shall refer to a separated integral scheme of finite type over the field of complex numbers $\mathbb{C}$.

    \item \textbf{Irreducible holomorphic symplectic manifolds.} The objects of study in this article are irreducible holomorphic symplectic manifolds, otherwise known as hyperk\"{a}hler manifolds. We abbreviate irreducible holomorphic symplectic manifold as IHSM.

    \item \textbf{The Lefschetz standard conjecture.} Since we always work over $\mathbb{C}$, we will throughout this article refer to the Lefschetz standard conjecture $B(X)$ (\cite[p. 196]{Gr}) as \textit{the} Lefschetz standard conjecture (LSC).
    
    \item \textbf{Rational coefficients.} Unless otherwise specified, invariants such as cohomology and the Grothendieck group will be assumed to have rational coefficients. For instance, we  write $K^0_{top}(X)$ to denote $$K^0_{top}(X,\mathbb{Z})\otimes_{\mathbb{Z}} \mathbb{Q},$$ and $H^i(X)$ to denote the rational cohomology $H^i(X, \mathbb{Q})$. 

    \item \textbf{Even and odd cohomology}
    For a variety $X$, we denote by $H^{ev}(X, \mathbb{Z})$ the ring of even cohomology of $X$. We denote by $H^{odd}(X, \mathbb{Z})$ the odd cohomology.

    \item \textbf{Derived Categories.} For a smooth projective variety $X$, we denote by $D^b(X): = D^b(Coh(X))$ the bounded derived category of coherent sheaves on $X$.

    \item \textbf{Mukai vectors.}  We will assume that the Mukai vector $$v([F]): = ch([F])\sqrt{td_X},$$ for a class $[F]\in K_{top}^0(X, \mathbb{Z})$, is primitive, positive, and of Hodge-type $(1, 1)$ (see subsection \ref{Moduli Space}). Since the Todd character of an Abelian variety $A$ is trivial, we will refer to the Mukai vector of a class $[F]\in K_{top}^0(A, \mathbb{Z})$ simply as the Chern character $ch([F])$. 

    \item \textbf{Moduli spaces of sheaves on Abelian surfaces.} We write $\mathcal{M}(w): = \mathcal{M}_H(w_{n+1})$ to denote the moduli space of $H$-generic Gieseker-stable sheaves on an Abelian surface, where $w: = w_{n+1}$ denotes the Chern character of length-$(n+1)$ zero dimensional subschemes of $A$. (The definitions are reviewed in subsection \ref{Moduli Space}.) For $v$ a Mukai vector as above, we will assume that $\mathrm{dim}(\mathcal{M}(v))\geq 8$.

    \item \textbf{Twistor lines.} We will frequently (for example in Proposition \ref{twistor family}) refer to a reduced connected projective curve $C$ within a connected component of the moduli spaces of marked generalized Kummer varieties. Such a curve is chosen to be a generic twistor line (see for example \cite[Definition 5.13]{Markman4}), but we will not utilize this fact per se.

    \item \textbf{Quotient cohomology groups.} For a smooth projective variety $X$, we define a quotient space $\overline{H}^i(X)$ of $ H^i(X)$, where $$\overline{H}^i(X): = H^{i}(X)/[R^i(X)+Alg^i(X)],$$ for which $R^i(X)$ consists of degree $i$ classes of $X$ generated by classes of degree $<i$ and $Alg^i(X)$ consists of algebraic classes of degree $i$.
\end{enumerate}

\textbf{\section{Background and preliminary results}\label{Background}}

\subsection{IHSMs of generalized Kummer type}\label{IHSMs}

  Let us define the objects of study and fix some notation.

\begin{Def}
An \textit{irreducible holomorphic symplectic manifold (IHSM)} is a simply connected compact K\"{a}hler manifold $X$, such that $H^{2,0}(X)$ is spanned by a unique (up to scale) nowhere vanishing holomorphic 2-form. 
\end{Def}
IHSMs admit hyperk\"{a}hler metrics; that is, for an IHSM of real dimension $4n$, we have a Riemannian metric with holonomy $Sp(n)$. They are therefore examples of compact hyperk\"{a}hler manifolds. Though IHSMs in general do not admit a global Torelli theorem in analogy to $K3$ surfaces, they do form a distinguished class of Calabi-Yau manifolds whose geometric properties are tightly controlled by their second cohomology (see for example \cite{Verbitsky1} and \cite{Hu3}; for a survey that describes these results in terms of Hodge-theoretic data, see \cite{Markman5}). 

\begin{Rem}\label{BBF remark}
The cohomology group $H^2(X, \mathbb{Z})$ of an IHSM $X$ of complex dimension $2n$ admits a canonical, symmetric, non-degenerate, integral, primitive bilinear pairing called the \textit{Beauville-Bogomolov-Fujiki (BBF)} pairing $q_X$ (\cite[Theorem 5]{Beauville}). For all $\alpha\in H^2(X, \mathbb{C})$, we have the equation
\begin{equation}
 \int_X \alpha^{2n} = c_Xq_X(\alpha)^n
\end{equation}
where $c_X\in \mathbb{Q}_{\geq 0}$ is the Fujiki constant. The pairing $q_X$ has signature $(3, b_2-3)$ (\cite[Theorem 5(a)]{Beauville}). This generalizes the intersection pairing on the second cohomology of $K3$ surfaces. 

\end{Rem}
Now we describe the construction of IHSMs of generalized Kummer type. Let $A$ be a complex Abelian surface and $A^{[n+1]}$ the Hilbert scheme of zero dimensional subschemes of length-$(n+1)$ on $A$. Consider
\begin{equation}\label{albanese}
A^{[n+1]}\xrightarrow{s_{n+1}} A
\end{equation}
where $s_{n+1}$ is the composition of the Hilbert-Chow morphism $h: A^{[n+1]}\rightarrow A^{(n+1)}$ sending a length-$(n+1)$ subscheme to its weighted support in $A$ and summation $\sigma: A^{(n+1)}\rightarrow A$ in the group $A$. 
\begin{Def}\label{kummer def}
The variety $Kum_n(A): = s^{-1}_{n+1}(0)$ is an IHSM, called a \textit{generalized Kummer variety} (\cite{Beauville}).
\end{Def}

The morphism $s_{n+1}$ is an isotrivial fibration, i.e. there exists a commutative diagram
\begin{center}
\begin{tikzcd}
{A\times_A A^{[n+1]}} \arrow[rrd, "\pi_2", bend left, shift left] \arrow[rdd, "\pi_1"', bend right] \arrow[rd, "\cong"] &                                                   &                                  \\
                                                                                                                        & A\times Kum_n(A) \arrow[r, "t_K"] \arrow[d, "p_1"'] & {A^{[n+1]}} \arrow[d, "s_{n+1}"] \\
                                                                                                                        & A \arrow[r, "n+1"']                                 & A                               
\end{tikzcd}                                                     
\end{center}
where $p_1$ is the projection onto the first factor and $n+1$ denotes multiplication by $n+1$. Let $t_a: A\rightarrow A$ be the translation map, defined by $t_a(x) = x+a$ for $a\in A$. The map $s_{n+1}$ is equivariant with respect to this translation action, since we let $a$ act on $A^{[n+1]}$ by $t_a$ and $a$ act on $A$ by $t_{(n+1)a}$. The isomorphism in the diagram above is given by $(a, \xi)\mapsto (a, t_{-a}(\xi))$. Therefore, the map $t_K$ is the restriction of the translation map on $A^{[n+1]}$ to $Kum_n(A)$. Since $t_a$ acts transitively on $A$, all fibers of $s_{n+1}$ are isomorphic.
\begin{Ex}
If $n = 1$, then the fiber
$s^{-1}_2(0) = Kum_1(A)$ is the Kummer surface associated to $A$, i.e. a $K3$ surface obtained from the resolution of the quotient of $A$ by the involution $\iota(x) = -x$ by blowing up at the $16$ ordinary double points. Such a $K3$ surface contains 16 $(-2)$-curves. This explains the naming convention.
\end{Ex}

\begin{Def}
If $X$ is deformation equivalent to $Kum_n(A)$, we say $X$ is of \textit{generalized Kummer type}. 
\end{Def}

\vspace{5mm}
\subsection{The Mukai lattice and $K$-theory}\label{KTheory}
For $X$ a smooth projective surface, recall we denote by $H^{ev}(X, \mathbb{Z})$ the graded ring of even cohomology of $X$.
\begin{Def}
Let $A$ be an Abelian surface. Let $K^0(A, \mathbb{Z})$ be the Grothendieck group of $A$ generated by holomorphic vector bundles on $A$, and let $K^0_{top}(A, \mathbb{Z})$ be its topological analogue. The \textit{Mukai lattice} $\Lambda(A)$ consists of the group $K^0_{top}(A, \mathbb{Z})$ along with the \textit{Mukai pairing}
\begin{equation}\label{Mukai pairing}
(x, y): = -\chi(x^{\vee}\otimes y), 
\end{equation}
for $x, y\in K^0_{top}(A, \mathbb{Z})$ and where $x^{\vee} = (x_0, -x_1, x_2)$ is the class dual to $x = (x_0, x_1, x_2)$. The Mukai pairing is given by the pushforward of $x^{\vee}\otimes y$ along the constant map $A\rightarrow \{pt\}$.
\end{Def}

\begin{Rem}
There is an isomorphism \begin{equation}\label{mukai vector isomorphism}K^0_{top}(A, \mathbb{Z})\xrightarrow[]{\sim} H^{ev}(A, \mathbb{Z})\end{equation} given by sending a class $[F]$ to its \textit{Mukai vector} \begin{equation}v([F]): = ch([F])\sqrt{td_{A}}.\end{equation} Since the Todd character of an Abelian surface is trivial, we may identify the Mukai vector of a class $[F]\in K^0_{top}(A, \mathbb{Z})$ with the Chern character of $[F]$, as per the conventions. The Chern character is given, with respect to the grading on $H^{ev}(A, \mathbb{Z})$, by $ch([F]) = (rk([F]), c_1([F]), \chi([F]))$. Moreover, by the Hirzebruch-Riemann-Roch theorem, the isomorphism (\ref{mukai vector isomorphism}) is an isometry with respect to the Mukai pairing on $K^0_{top}(A, \mathbb{Z})$ and the pairing
\begin{equation}\label{cohomological Mukai pairing}
((x_0, x_1, x_2), (y_0, y_1, y_2))_{H^{ev}} := \int_Ax_1y_1-x_0y_2-x_2y_0
\end{equation}
on $H^{ev}(A, \mathbb{Z})$.
\end{Rem}

\begin{Rem}\label{Mukai Hodge structure}
Following Mukai in \cite{Mukai2}, there is a natural weight-two Hodge structure on $H^{ev}_{\mathbb{C}}: = H^{ev}(A, \mathbb{Z})\otimes_{\mathbb{Z}}\mathbb{C}$, for which

\begin{equation}
    \begin{split}
    & H^{2,0}(H^{ev}_{\mathbb{C}}): = H^{2,0}(A)\\ & H^{0,2}(H^{ev}_{\mathbb{C}}): = H^{0,2}(A)\\
    & H^{1, 1}(H^{ev}_{\mathbb{C}}): = H^0(A)\oplus H^{1,1}(A)\oplus H^4(A).\\
    \end{split}
\end{equation}
\end{Rem}

Since $H^1(A, \mathbb{Z})\neq 0$, we need also to consider the odd $K$-groups of both $A$ and, eventually, moduli spaces of sheaves on $A$ (see subsection \ref{Moduli Space}). We therefore recall the necessary construction in $K$-theory following \cite{At} and \cite{AH}.

\begin{Def}
Let $X$ be a complex algebraic variety. The \textit{reduced} $K$-group $\tilde{K}^0_{top}(X)$ is the kernel of the restriction homomorphism $K^0_{top}(X)\rightarrow K^0_{top}(x_0)$ for $x_0\in X$. 
\end{Def}

\begin{Def}
Denote by $SX$ the topological suspension of $X$ and define the odd $K$-group as $$K^1_{top}(X, \mathbb{Z}):= \tilde{K}^0(SX, \mathbb{Z}),$$ and $K^1_{top}(X): = K^1_{top}(X,\mathbb{Z})\otimes \mathbb{Q}$ (cf. \cite[pg. 199]{AH}). We define the \textit{$K$-ring} as

$$K^{\bullet}_{top}(X): = K^0_{top}(X)\oplus K^1_{top}(X).$$
Define the reduced cohomology group $\tilde{H}^*(X, \mathbb{Q})$ analogously to the reduced $K$-group.
\end{Def}

We have, in addition, the suspension isomorphism $$\sigma^n: \Tilde{H}^*(X, \mathbb{Q})\xrightarrow[]{\sim} \Tilde{H}^*(S^n(X), \mathbb{Q})$$ given by taking the cup product of $a\in \Tilde{H}^*(X, \mathbb{Q})$ with the canonical generator of $H^n(S^n, \mathbb{Z})$.

Suppose that $X$ and $Y$ have the structure of finite $CW$-complexes and assume that $H^*(X, \mathbb{Z})$ is torsion-free. By the K\"{u}nneth theorem (see \cite[Theorem 1]{At}), there is an isomorphism

\begin{equation}\label{KKunneth}
[K^0_{top}(X)\otimes K^0_{top}(Y)]\oplus [K^1_{top}(X)\otimes K^1_{top}(Y)]\xrightarrow[]{\sim} K^0_{top}(X\times Y).
\end{equation}

We can now define the odd Chern classes in terms of the odd $K$-group: 
\begin{Def}\label{odd Chern classes}
Let $x$ be a class in $K^1_{top}(X)$, for $X$ a complex projective variety. We denote by $\Tilde{x}$ the corresponding class in $\Tilde{K}^0(SX^+)$, where for $x_0\in X$, $SX^+$ denotes the suspension of $X^+: = X\sqcup \{x_0\}$. Let $i$ be a half-integer, i.e. a rational number such that $2i$ is an odd integer. The Chern class $c_i(x)$ of $x$ is defined as the image in $H^{2i}(X, \mathbb{Z})$ of $c_{i+\frac{1}{2}}(\Tilde{x})$, via the isomorphism $H^{2i}(X, \mathbb{Z})\cong H^{2i+1}(SX^+, \mathbb{Z})$.
\end{Def}

\begin{Lem}(\cite[pg. 209]{AH})\label{AH isomorphism}
There is a ring isomorphism 
$$
ch: K^{\bullet}_{top}(X)\rightarrow H^*(X, \mathbb{Q})
$$
which sends $K^0_{top}(X)$ to $H^{ev}(X, \mathbb{Q})$ and $K^1_{top}(X)$ to $H^{odd}(X, \mathbb{Q})$. 
\end{Lem}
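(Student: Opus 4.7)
The plan is to construct the Chern character on both the even and odd components of $K^{\bullet}_{top}(X)$, verify multiplicativity, and then prove the bijection after tensoring with $\mathbb{Q}$ via the Atiyah-Hirzebruch spectral sequence. First I would define $ch$ on $K^0_{top}(X,\mathbb{Z})$ by the splitting principle: for a complex vector bundle $E$ of rank $r$ with Chern roots $\alpha_1,\ldots,\alpha_r$, set $ch(E):=\sum_{i=1}^r e^{\alpha_i}\in H^{ev}(X,\mathbb{Q})$. This is well-known to descend to a ring homomorphism $K^0_{top}(X,\mathbb{Z})\to H^{ev}(X,\mathbb{Q})$, and after tensoring with $\mathbb{Q}$ it yields a $\mathbb{Q}$-algebra map $K^0_{top}(X)\to H^{ev}(X,\mathbb{Q})$.

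Next I would define $ch$ on $K^1_{top}(X)$ by leveraging Definition \ref{odd Chern classes}. Since $K^1_{top}(X)=\tilde K^0(SX^+)$, one takes $x\in K^1_{top}(X)$, passes to $\tilde x\in \tilde K^0(SX^+)$, applies the even Chern character to obtain a class in $\tilde H^{ev}(SX^+,\mathbb{Q})$, and transports back to $H^{odd}(X,\mathbb{Q})$ via the inverse of the suspension isomorphism $\sigma$. Concretely this gives components $c_{i}(x)\in H^{2i}(X,\mathbb{Z})$ for half-integers $i$, and a class $ch(x)\in H^{odd}(X,\mathbb{Q})$. I would then glue the two pieces into a single $\mathbb{Q}$-linear map $ch:K^{\bullet}_{top}(X)\to H^*(X,\mathbb{Q})$ respecting the $\mathbb{Z}/2$-grading.

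To check that $ch$ is a ring homomorphism, I would verify multiplicativity separately for the three types of products: even-even (classical), odd-even (immediate from the compatibility of the splitting principle with suspension), and odd-odd. For odd-odd, I would use the external suspension product $SX^+\wedge SY^+\to S^2(X^+\wedge Y^+)$ together with the Künneth-type isomorphism (\ref{KKunneth}); naturality of $ch$ under smash products, combined with the fact that the suspension isomorphism is realized by cup product with the generator of $H^1(S^1,\mathbb{Z})$, then forces $ch(x\cdot y)=ch(x)\cup ch(y)$.

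Finally, I would prove the bijectivity via the Atiyah-Hirzebruch spectral sequence $E_2^{p,q}=H^p(X, K^q_{top}(\mathrm{pt}))\Rightarrow K^{p+q}_{top}(X)$. Since $K^{\bullet}_{top}(\mathrm{pt})$ is concentrated in even degrees with value $\mathbb{Z}$, and all the higher differentials are known to be torsion-valued, tensoring with $\mathbb{Q}$ collapses the spectral sequence at $E_2$, producing an abstract graded isomorphism $K^{\bullet}_{top}(X)\cong H^{ev}(X,\mathbb{Q})\oplus H^{odd}(X,\mathbb{Q})$ of the correct parities. To identify this abstract isomorphism with $ch$, I would argue by induction on a CW-decomposition of $X$: the result is trivial for a point, true for spheres by direct computation (on $S^{2n}$, Bott periodicity gives $\tilde K^0_{top}\cong \mathbb{Z}$ generated by $[H]-1$ whose Chern character hits the orientation class, and similarly on $S^{2n+1}$ via suspension), and the inductive step follows from the five lemma applied to the long exact sequences in $K$-theory and cohomology associated to a cofibration $X^{(n-1)}\hookrightarrow X^{(n)}\to X^{(n)}/X^{(n-1)}$, with naturality of $ch$ providing the commuting squares. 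The main obstacle is the odd-odd multiplicativity step, which requires tracking the sign conventions in the suspension isomorphism and in the $K$-theoretic Künneth pairing carefully enough that the sign agrees with the graded-commutative cup product on $H^{odd}\otimes H^{odd}$.
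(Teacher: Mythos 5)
Your proposal is correct and is essentially the same argument as in the source the paper cites: the paper gives no proof of this lemma, deferring to \cite{AH}, and that reference's proof is precisely your combination of the rational degeneration of the Atiyah--Hirzebruch spectral sequence (all differentials are torsion-valued) with the cell-by-cell induction via the five lemma, the suspension definition of $ch$ on $K^1_{top}$, and naturality of $ch$ as a transformation of cohomology theories. One cosmetic slip: for $n>1$ the generator of $\tilde{K}^0(S^{2n},\mathbb{Z})$ is the image under Bott periodicity of the $n$-fold external power of $[H]-1$, not $[H]-1$ itself, but its Chern character is still the orientation class by exactly the external multiplicativity you invoke, so the computation stands.
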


\begin{Rem}
It is necessary for us to generalize the Mukai pairing to a pairing on $K^{\bullet}(A)$. Define the operator $\tau: H^i(A, \mathbb{Z})\rightarrow H^i(A, \mathbb{Z})$ where $\tau$ acts by $(-1)^{i(i-1)/2}$. Note that $\tau$ restricts to a map on $H^2(A, \mathbb{Z})$ via $\tau(x) = -x$. The generalized Mukai pairing on $H^*(A, \mathbb{Z})$ may be expressed as 
\begin{equation}\label{Generalized Mukai pairing}
( v, w ) : = -\int_A \tau(v)\cup w.
\end{equation}
The above pairing pulls back to the pairing (\ref{Mukai pairing}) via the Chern character map.
\end{Rem}
\vspace{5mm}

\subsection{Moduli spaces of sheaves on an Abelian surface and the $\theta$-classes}\label{Moduli Space}

The purpose of this subsection is to introduce certain characteristic classes on moduli spaces of sheaves on an Abelian surface. After relating the cohomology rings of projective deformations of moduli spaces of sheaves to the cohomology rings of projective deformations of generalized Kummer varieties, such characteristic classes will be our source of algebraic cycles in the proof of the LSC.  

 \begin{Def}
 We say a Mukai vector $v = ch([F])\in H^{ev}(A, \mathbb{Z})$ is \textit{primitive} if it is not a multiple of some other class in $H^{ev}(A, \mathbb{Z})$.
 \end{Def}
 \begin{Rem}
  Let $F$ be a Gieseker-stable sheaf on $A$, for which $v = ch([F])$ is primitive and $c_1([F])$ is of Hodge-type (1,1). There is a system of hyperplanes in the ample cone $Amp(A)$ called $v$-walls, which is countable but locally finite (see \cite[section 4C]{HL}).
  \end{Rem}

 \begin{Def}
 A class $H\in Amp(A)$ is said to be a \textit{$v$-generic polarization} if it does not belong to any $v$-wall.
 \end{Def}
 
 Let $\mathcal{M}(v): = \mathcal{M}_H(v)$ denote the moduli space of Gieseker-stable sheaves with Chern character $v$ and $H$ a $v$-generic polarization. It has been shown in \cite{Mukai1} that $\mathcal{M}_H(v)$ is a smooth and symplectic quasi-projective variety and that dim$(\mathcal{M}(v)) = 2+(v^{\vee}, v)$, where $v^{\vee}$ is the class dual to $v$.
 
Identifying $H^4(A, \mathbb{Z})$ with $\mathbb{Z}$, we may write $\chi(F) = a\omega$, where $\omega$ is the fundamental class of $A$. We make the following definition following \cite{Yoshioka}:
 
 \begin{Def}
 A Mukai vector $v := ch(F) = (rk(F),\, c_1(F),\, a\omega) \in H^{ev}(A, \mathbb{Z})$ is \textit{positive} if one of the following cases holds: 
 \begin{enumerate}
     \item $rk(F)>0$.
     \item $rk(F) = 0$, $c_1(F)$ is effective, and $a\neq 0$.
     \item $rk(F) = c_1(F) = 0$ and $a<0$.
 \end{enumerate}
 \end{Def}
 
  Let $v$ be primitive, positive, and of Hodge-type $(1, 1)$ with respect to Mukai's Hodge structure on $H^{ev}(A, \mathbb{C}$), and assume that dim$(\mathcal{M}_H(v))\geq 4$. Then the moduli space $\mathcal{M}_H(v)$ is deformation equivalent to $A^{[n+1]}\times \mathrm{Pic}^0(A)$ (\cite[Theorem 0.1]{Yoshioka}).
 
Let $w_{n+1}$ denote the Chern character of the ideal sheaf of a length $(n+1)$-subscheme of $A$ and let $\mathcal{M}_H(w_{n+1})$ be the moduli space of rank one torsion-free sheaves on $A$ with Chern character $w_{n+1}$. As in the conventions, we will identify $w: = w_{n+1}$ and write $\mathcal{M}(w)$ for the moduli space $\mathcal{M}_H(w_{n+1})$.

\begin{Def}
A \textit{universal sheaf} is a coherent sheaf $\mathcal{E}$ over $A\times \mathcal{M}(v)$, flat over $\mathcal{M}(v)$, whose restriction to $A\times \{m\}$, $m\in \mathcal{M}(v)$, is isomorphic to $F_m$, for $F_m$ a sheaf representing the isomorphism class $m$. 
\end{Def}

\begin{Rem}
If there exists a class $x\in K^0(A)$ such that $\chi(x\otimes v) = 1$, then a universal sheaf $\mathcal{E}$ exists. The moduli space $\mathcal{M}(v)$ always admits a (twisted) universal sheaf (see the appendix of \cite{Mukai2}).
\end{Rem}

  Let $\mathcal{E}$ be a universal sheaf over $A\times \mathcal{M}(v)$. Let $\pi_{ij}$ be the projection of $\mathcal{M}(v)\times A\times \mathcal{M}(v)$ to the $i$th and $j$th factors. Consider the following element in the bounded derived category $D^b(\mathcal{M}(v)\times \mathcal{M}(v)$):
  \begin{equation}\label{E bullet}
  E^{\bullet}: = R\pi_{13, *}R\mathcal{H}om(\pi^*_{12}\mathcal{E}, \pi^*_{23}\mathcal{E}).
  \end{equation}
Define
\begin{equation}\label{RelativeExtension}
E^i: = \mathcal{E}xt_{\pi_{13}}^i(\pi_{12}^*\mathcal{E}, \pi_{23}^*\mathcal{E})
\end{equation}
to be the $i$th relative extension sheaf and let 
\begin{equation}\label{KclassRelativeExtension}
    [E^{\bullet}]: = \sum_{i}(-1)^i[\mathcal{E}xt^i_{\pi_{13}}(\pi_{12}^*\mathcal{E}, \pi_{23}^*\mathcal{E})] = [E^2]-[E^1]
\end{equation}    
denote the class of $E^{\bullet}$ in the Grothendieck group of $\mathcal{M}(v)\times \mathcal{M}(v)$. We have that $E^0$ vanishes, while $E^2$ is isomorphic to the structure sheaf of the diagonal of $\mathcal{M}(v)\times \mathcal{M}(v)$. Furthermore, the sheaf $E^1$ is a reflexive torsion-free sheaf of rank $(v, v) = 2n+2$ and is locally free away from the diagonal, by \cite[Remark 4.6]{Markman2}.

\begin{Rem}\label{decomposition of first chern class}
Let $p_i$ denote the projection of $\mathcal{M}(v)\times \mathcal{M}(v)$ onto the $i$th factor. There is a canonical decomposition 
$$
c_1([E^{\bullet}]) = -c_1([E^1]) = p_1^*\alpha + p_2^*\beta + \delta
$$
for which $p_1^*\alpha \in H^2(\mathcal{M}(v))\otimes H^0(\mathcal{M}(v))$, $p_2^*\beta \in H^0(\mathcal{M}(v))\otimes H^2(\mathcal{M}(v))$ and \begin{equation}\delta  = \sum_ip_1^*\delta^{\prime}_i\wedge p_2^*\delta^{\prime\prime}_i \in H^1(\mathcal{M}(v))\otimes H^1(\mathcal{M}(v)).\end{equation}
\end{Rem}
\noindent Define characteristic classes $\theta([E^{\bullet}])$ and $\theta([E^1])$ by 
\begin{equation}\label{theta class 1}
    \theta([E^{\bullet}]): = ch([E^{\bullet}])\mathrm{exp}\left(\frac{-p^*_1\alpha - p_2^*\beta}{2n+2}\right)
\end{equation} and
\begin{equation}\label{theta class 2}
    \theta([E^1]): = ch([E^1])\mathrm{exp}\left(\frac{-p^*_1\alpha - p_2^*\beta}{2n+2}\right).
    \end{equation}
We denote by $\theta_i([E^{\bullet}])$ and $\theta_i([E^1])$ the $i$th summands of (\ref{theta class 1}) and (\ref{theta class 2}) respectively in $H^{2i}(\mathcal{M}(v)\times \mathcal{M}(v), \mathbb{Q})$.

\begin{Rem}\label{description of the kappa class}
Note the close relationship between the characteristic class $\theta([E^{1}])$ and the characteristic class 
$$
\kappa([E^{1}]): = ch([E^{1}])\mathrm{exp}\left(\frac{-c_1([E^{1}])}{2n+2}\right)
$$
defined originally for IHSMs of $K3^{[n]}$-deformation type in \cite{Markman2}. For $X$ a topological space and $[F]$ a class in $K^{\bullet}_{top}(X)$ of rank $r>0$, let $x_j$ denote the Chern roots of $[F]$. The summands $\kappa_i([F])$ are defined as 
$$
\kappa_i([F]): = \left(\sum_{j = 1}^r\frac{x_j-\left(\frac{\sum_{k = 1}^r x_k}{r}\right)}{i!}\right)^i.
$$
\end{Rem}
By construction, $\kappa_i([E^{1}])$ is algebraic for each $i$ as a class in $H^{2i}(\mathcal{M}(v)\times \mathcal{M}(v), \mathbb{Q})$. 
Note that there is the following relationship between the classes $\kappa_i([E^{1}])$ and $\theta_i([E^{1}])$: 
$$
\kappa_i([E^{1}]) = \theta_i([E^{1}])\left(\mathrm{exp}\left(\frac{-\delta}{2n+2}\right)\right),
$$
for $\delta$ in Remark \ref{decomposition of first chern class}. The algebraicity of $\theta_i([E^1])$ is deduced from the algebraicity of $\kappa_i([E^1])$, as the classes differ by taking the cup product with an algebraic class.

\begin{Rem}\label{kappa class is analytic}
For $[\tilde{F}]$ the class of a twisted coherent sheaf (see \cite{Caldararu} in addition to \cite[Section 2.2]{Markman2} for a discussion on twisted sheaves), Markman defines the class $\kappa([\tilde{F}])$ as the Taylor series of the branch of the $r$-th root function \cite[Equation 2.6]{Markman2}:
$$
\kappa([\tilde{F}]): = Sqrt_r(ch([\tilde{F}^{\otimes r}\otimes \mathrm{det}([\tilde{F}^{-1}]])).
$$
We will later utilize this definition to conclude that the classes $\kappa_i([E^{1}_X])$  and $\theta_i([E^1_X])$ are algebraic, where $[E^{1}_X]$ is the twisted coherent sheaf obtained from $[E^{1}]$ via a projective deformation of a product of moduli spaces of stable sheaves on an Abelian surface (we provide details in subsection \ref{universal deformation}). Here we rely fundamentally on Verbitsky's theory of hyperholomorphic sheaves, \cite{Verbitsky1}. 
\end{Rem}
\vspace{5mm}
\subsection{Monodromy of generalized Kummer varieties}\label{monodromy}

The main result we prove in this subsection states that the previously described $\theta$-classes (\ref{theta class 2}) are invariant under the diagonal monodromy action. To put this in the proper context, we need to introduce results from \cite{Markman1}. See in particular \cite[Section 4]{Markman1}. For background results on Clifford algebras, we recommend \cite{Chevalley}.
\begin{Def}
Let $X$ be a smooth projective variety and let $g$ be an automorphism of $H^*(X, \mathbb{Z})$. If there exists a family $\mathcal{X}\rightarrow B$ of compact K\"{a}hler manifolds such that $X$ is the fiber over $b_0$ and if $g$ is in the image of $\pi_1(B, b_0)$ under the monodromy representation, then we say $g$ is a \textit{monodromy operator}. The subgroup $\mathrm{Mon}(X)\subset \mathrm{GL}(H^*(X, \mathbb{Z}))$ generated by all monodromy operators is called the \textit{monodromy group} of $X$.

\end{Def}

For $A$ an Abelian surface, let
$$
V: = H^1(A, \mathbb{Z})\oplus H^1(A,\mathbb{Z})^*,
$$
where we naturally identify $H^1(A, \mathbb{Z})^*$ with $H^1(\mathrm{Pic}^0(A), \mathbb{Z})$. We associate to $V$ a bilinear form $(\cdot, \cdot)_{V}$ given by
\begin{equation}\label{bilinear}
((\alpha_1, \omega_1), (\alpha_2, \omega_2))_{V}: = \omega_2(\alpha_1) + \omega_1(\alpha_2). 
\end{equation}
Then, $V$ is a lattice isometric to the direct sum of four copies of the hyperbolic plane.

The character group of $SO(V)$ is isomorphic to $\mathbb{Z}/2\mathbb{Z}$. It is generated by the orientation (or norm) character
$$
ort: SO(V)\rightarrow \{\pm 1\}.
$$
The character $ort$ is generated by the action of isometries of the top cohomology of the positive cone in $V\otimes_{\mathbb{Z}} \mathbb{R}$ (cf. \cite[section 4]{Markman5}).
Denote by $SO_{+}(V)$ the kernel of $ort$. Up to isomorphism, $\mathrm{Spin}(V): = \mathrm{Spin}(V, (\cdot, \cdot)_{V})$ is the unique double cover of the index two subgroup $SO_+(V)$ of the the special orthogonal group $SO(V, (\cdot, \cdot)_{V})$. The cohomology ring $H^*(A, \mathbb{Z})$ is the spin representation of the arithmetic group $\mathrm{Spin}(V)$.

It is convenient for us to understand the monodromy in the language of Clifford algebras. Recall that the Clifford algebra $C(V)$ is the quotient of the tensor algebra $\oplus_{n\geq 0} V^{\otimes n}$ by the relation 
$$
x\cdot x^{\prime} + x^{\prime}\cdot x = (x, x^{\prime})_{V}.
$$

\begin{Def}
 Set $G(V): = \{x\in C(V)^{\times}: xVx^{-1}\subset V\}.$ The standard representation $V$ is defined by the homomorphism
 \begin{equation}
 \begin{split}
     &\rho: G(V)\rightarrow O(V)\\
     &\rho_x(v) = x\cdot v\cdot x^{-1}.
     \end{split}
 \end{equation}
 
\end{Def}

The group $H^*(A, \mathbb{Z})$ is isomorphic to $\wedge^{*}H^1(A, \mathbb{Z})$ and $H^*(A, \mathbb{Z})$ has a $C(V)$-module structure as follows: $V$ embeds into $C(V)$ and $V$ embeds into $\mathrm{End}[H^*(A, \mathbb{Z})]$, where for $(w, \theta)\in V$, the homomorphism 
\begin{equation}
    m: V\rightarrow \mathrm{End}[H^*(A, \mathbb{Z})]
\end{equation}
is defined by sending $w$ to $w\wedge (\bullet)$ and $\theta$ to the derivation $D_{\theta}$ sending $H^i(A, \mathbb{Z})$ to $H^{i-1}(A, \mathbb{Z})$. The homomorphism $m$ satisfies the Clifford relation, i.e. $m(v)\circ m(w) + m(w)\circ m(v) = (v, w)id_{H^*(A, \mathbb{Z})}$ (cf. \cite[Section 4.1]{Markman1}). The Clifford product defines the homomorphisms $V\otimes S^+\rightarrow S^-$, $V\otimes S^-\rightarrow S^+$. We define the homomorphism $S^{+}\otimes S^-\rightarrow V$ as the composition of the homomorphism $V^*\rightarrow V$ given by the pairing $(\cdot, \cdot)_V$ and the homomorphism $S^+\otimes S^-\rightarrow V$ sending $s\otimes t$ to $((\cdot)s, t)_{S^-}$, where $(\cdot, \cdot)_{S^-}$ is the restriction of the Mukai pairing (\ref{Generalized Mukai pairing}) to $S^-$. The homomorphism $m_{w}$ corresponds to Clifford multplication $(\bullet)\cdot w$ with the Chern character $w = (1, 0, -n-1)\in S^{+}$ of a length-(n+1) subscheme of $A$. We have for $y = (v, \theta) \in V$ 
$$
m_{w}(y) = v\wedge w+D_{\theta}(w).
$$
The cokernel of $m_{w}$ is isomorphic to the the group $\Gamma$ of points of order $n+1$ on $A$ (see the calculation in \cite[pp. 16-17]{Markman1}). Let $m_{w}^{\dagger}: S^{-}\rightarrow V$ be the adjoint of $m_{w}$. Given a primitive element $v\in S^{+}$ such that $(v, v)_{S^+} = -2n$, there exists an element $g\in \mathrm{Spin}(V)$ such that $g(w) = v$. For $x\in V$ and considering $x$ and $g$ as elements of $C(V)$, there are equalities
$$
g(m_{w}(x)) = g\cdot x\cdot w = (g\cdot x\cdot g^{-1})\cdot (g\cdot w) = m_v(\rho_g(x)).
$$
Let $m_v^{\dagger}$ be the adjoint of $m_v$. There is a commutative diagram
\begin{center}
\begin{tikzcd}
V \arrow[d, "\rho(g)"'] \arrow[r, "m_{w}"] & S^- \arrow[r, "m_{w}^{\dagger}"] \arrow[d, "g"] & V \arrow[d, "\rho(g)"'] \\
V \arrow[r, "m_v"']                              & S^- \arrow[r, "m_v^{\dagger}"']                       & V                      
\end{tikzcd}
\end{center}

The cokernels of $m_{w}$ and $m_v$ are isomorphic. Let $m^{-1}_v: S^-_{\mathbb{Q}}\rightarrow V_{\mathbb{Q}}$ be the inverse of $m_{v}: V_{\mathbb{Q}}\rightarrow S^-_{\mathbb{Q}}$. Then the quotient 
\begin{equation}\label{Gamma_v}
    m^{-1}_v(S^-)/V \cong \Gamma_v
\end{equation}
is a subgroup of the compact torus $V_{\mathbb{R}}/V$ isomorphic to $(\mathbb{Z}/(n+1)\mathbb{Z})^4$ and canonically associated to $v$.

\begin{Rem}\label{isomorphism of gamma and torsion points}
For $w$ the Chern character of the ideal sheaf of a length-$(n+1)$ subscheme of $A$, there is a commutative diagram
\begin{center}
    \begin{tikzcd}
\Gamma_w \arrow[d, "\cong"] \arrow[r, hook] & A\times \mathrm{Pic}^0(A)               \\
{A[n+1]} \arrow[r, hook]                    & A\times \{0\} \arrow[u, hook]
\end{tikzcd}
\end{center}
The group $A[n+1]$ of torsion points of $A$ of order $n+1$ acts on $A^{[n+1]}$. The Albanese map (\ref{albanese}) is invariant with respect to this action. Hence, $A[n+1]$ embeds into $\mathrm{Aut}_0(Kum_n(A))$, where $\mathrm{Aut}_0(Kum_n(A))$ is the group of automorphisms of $Kum_n(A)$ acting trivially on $H^2(Kum_n(A))$.
\end{Rem}

The following lemma states that $\mathrm{Aut}_0(Kum_n(A))$ deforms as a local system in the moduli space of marked generalized Kummers (see \cite{Hassett Tschinkel} for details):

\begin{Lem}(\cite[Theorem 2.1, and Proposition 3.1]{Hassett Tschinkel})\label{deformation invariant}
Let $n>1$. $Aut_0(Kum_n(A))$ is a deformation invariant of $Kum_n(A)$. The action of $\Gamma_w$ on $A$ extends to an action on $Kum_n(A)$ and furthermore extends to a natural action on any deformation of $Kum_n(A)$. 
\end{Lem}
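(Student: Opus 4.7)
The plan is to address the three claims in sequence. For the existence of the $\Gamma_w$-action on $Kum_n(A)$, Remark \ref{isomorphism of gamma and torsion points} identifies $\Gamma_w$ canonically with $A[n+1]$, which acts on $A$ by translation and hence on the Hilbert scheme $A^{[n+1]}$. Under the summation morphism $s_{n+1}: A^{[n+1]}\to A$, translation by $t\in A[n+1]$ sends a length-$(n+1)$ subscheme with weighted support summing to $p$ to one summing to $p + (n+1)t = p$. Thus the $A[n+1]$-action preserves every fiber of $s_{n+1}$, in particular $Kum_n(A) = s_{n+1}^{-1}(0)$. Since $H^2(Kum_n(A))$ decomposes via the Beauville isomorphism as the direct sum of the $A[n+1]$-invariant piece of $H^2(A)$ and the class of the exceptional divisor of the Hilbert--Chow resolution, both of which are visibly fixed by translations, this action embeds $\Gamma_w$ in $\mathrm{Aut}_0(Kum_n(A))$.

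For the deformation invariance of $\mathrm{Aut}_0$, I would first show that for any IHSM $X$ the group $\mathrm{Aut}_0(X)$ is finite. Any $g\in \mathrm{Aut}_0(X)$ fixes an ample class $\omega\in H^2(X,\mathbb{Z})$, so $\mathrm{Aut}_0(X)$ sits inside the polarized automorphism group $\mathrm{Aut}(X,\omega)$, which is finite by Matsusaka--Mumford; alternatively one observes that $g$ must also preserve the holomorphic symplectic form generating $H^{2,0}$, so it acts trivially on all of $H^2$ and hence on the moduli of polarized marked IHSMs, forcing finiteness. Next, given a smooth proper family $\pi: \mathcal{X}\to B$ of IHSMs of generalized Kummer type with a distinguished fiber $Kum_n(A)$, I would produce a canonical extension of each $g\in \mathrm{Aut}_0(Kum_n(A))$ to a relative automorphism of $\mathcal{X}/B$. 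The action on the flat local system $R^2\pi_*\mathbb{Z}$ is necessarily trivial, so $g$ preserves the variation of Hodge structure on $H^2$; by the local Torelli theorem for IHSMs, $g$ lifts uniquely to each nearby fiber, defining a section of the sheaf $\underline{\mathrm{Aut}}_0(\mathcal{X}/B)$.

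Global extension then follows because $\underline{\mathrm{Aut}}_0(\mathcal{X}/B)$ is a local system of finite groups, whose parallel transport along any path in $\mathfrak{M}^0_{\Lambda}$ extends $g$ to an automorphism of every marked deformation. The main obstacle is rigidity: one must verify that the local lift produced above is genuinely unique, so that the resulting sheaf of automorphism groups is locally constant with fibers $\mathrm{Aut}_0$, and that a family of automorphisms acting trivially on $H^2$ in every fiber cannot degenerate to the identity on a subfamily without being globally trivial. Once this is established, compatibility with the $\Gamma_w$-action from the first paragraph is automatic, since $\Gamma_w$ is defined purely from the Mukai-theoretic data attached to $H^*(Kum_n(A))$, which itself deforms as a local system along $\mathfrak{M}^0_{\Lambda}$. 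This is the strategy realized in \cite{Hassett Tschinkel}.
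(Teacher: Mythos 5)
The paper itself gives no argument for this lemma: it is imported wholesale from \cite[Theorem 2.1, and Proposition 3.1]{Hassett Tschinkel}, so your proposal can only be measured against the cited proof. Your first paragraph is correct and matches what the paper does record in Remark \ref{isomorphism of gamma and torsion points}: translation by $t\in A[n+1]$ moves the Albanese image by $(n+1)t=0$, hence preserves each fiber of $s_{n+1}$, and for $n>1$ it acts trivially on $H^2(Kum_n(A))\cong H^2(A)\oplus\mathbb{Z}\delta$ (note that translations, being isotopic to the identity on $A$, act trivially on \emph{all} of $H^2(A)$, not merely on an invariant piece), so $\Gamma_w\cong A[n+1]$ embeds into $\mathrm{Aut}_0(Kum_n(A))$. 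It is worth observing that the hypothesis $n>1$ is exactly what makes this step work: for $n=1$ the sixteen $(-2)$-curves of the Kummer K3 are permuted by $A[2]$, the Beauville-type decomposition you invoke fails, and the action is not cohomologically trivial.

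The deformation-invariance half, however, has two genuine gaps. First, your finiteness argument assumes projectivity: the fibers of $p:\mathcal{Y}\to\mathfrak{M}^0_{\Lambda}$ are in general non-projective K\"{a}hler manifolds, so there is no ample class and Matsusaka--Mumford is unavailable, while your fallback (``acts trivially on the moduli of polarized marked IHSMs, forcing finiteness'') is not an argument --- a group can act trivially on a moduli space without being finite. The standard repair: $g\in\mathrm{Aut}_0(X)$ fixes a K\"{a}hler class, hence is an isometry of the unique Ricci-flat metric in that class, so $\mathrm{Aut}_0(X)$ sits in a compact group; since $H^0(X,T_X)\cong H^0(X,\Omega^1_X)=0$ for an IHSM, the automorphism group is discrete, and discrete plus compact gives finite. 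Second, and more seriously, ``by the local Torelli theorem, $g$ lifts uniquely to each nearby fiber'' is not a valid step: local Torelli says the period map is a local immersion and by itself lifts nothing. The actual mechanism --- and the heart of \cite[Theorem 2.1]{Hassett Tschinkel} --- is that $g$ induces an automorphism of the Kuranishi base $\mathrm{Def}(X)$ whose differential at the origin is the action on $H^1(X,T_X)\cong H^1(X,\Omega^1_X)\subset H^2(X,\mathbb{C})$, which is trivial for $g\in\mathrm{Aut}_0(X)$; since $g$ has finite order, its action on the smooth germ $\mathrm{Def}(X)$ is linearizable and hence trivial, and universality of the Kuranishi family then extends $g$ to every nearby fiber, with local constancy of the resulting sheaf of finite groups (no jumping in either direction) giving deformation invariance. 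You explicitly flag this rigidity as ``the main obstacle'' and leave it unverified --- but that obstacle is precisely the content of the theorem being proved, so as it stands the proposal outlines the statement rather than proving it.
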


Let $\mathcal{P}$ be an object in $D^b(A\times A)$ such that $\mathcal{P}$ is the Fourier-Mukai kernel of an autoequivalence $\Phi_{\mathcal{P}}: D^b(A)\rightarrow D^b(A)$. If we consider $ch(\mathcal{P})$ as a correspondence, it induces an automorphism on $H^*(A, \mathbb{Z})$, considered as a spin representation, which is the image of an element $g\in \mathrm{Spin}(V)$ in $\mathrm{GL}(H^*(A, \mathbb{Z}))$ (see for example \cite{Orlov}). 

Following \cite{Markman1}, there is a correspondence $\gamma_g$ associated to $g\in \mathrm{Spin}(V)$ as follows: Let $\mathcal{E}$ be a universal sheaf on $A\times \mathcal{M}(w)$ and let $\pi_{ij}$ denote the projection of $A\times \mathcal{M}(w)\times A\times \mathcal{M}(w)$ onto the $i$th and $j$th factors. Define

\begin{equation}\label{gamma}
\gamma_{g}: = c_{2n+4}(\pi_{24, *}[\pi_{12}^*\mathcal{E}^*\otimes \pi_{34}^*\mathcal{E}\otimes \pi_{13}^*\mathcal{P}][1])
\end{equation}

\noindent where the pushforward, pullback, and dual are taken in the derived category. By the Grothendieck-Riemann-Roch theorem, one can express $\gamma_{g}$ in terms of $ch(\mathcal{P})$. When one identifies $ch(\mathcal{P})$ with $g$ in this expression, there arises a class $$\gamma_g\in H^{4n+8}(\mathcal{M}(w)\times \mathcal{M}(w), \mathbb{Z})$$ for each $g\in \mathrm{Spin}(V)$. 
\begin{Def}
Define $\mathrm{Spin}(V)_{v}$ to be the subgroup of Spin$(V)$ that stabilizes $v$. 
\end{Def}
The following lemma yields a simplified version of several results in \cite{Markman1}. A full description of the interplay between derived autoequivalences of an Abelian surface $A$ and the monodromy of $\mathcal{M}(w)$ is present there.

\begin{Lem}
The correspondence $\gamma_g: H^*(\mathcal{M}(w), \mathbb{Z})\longrightarrow  H^*(\mathcal{M}(w), \mathbb{Z})$ given by (\ref{gamma}) is a graded ring homomorphism, and there is an induced homomorphism
\begin{equation}\label{mon g}
\mathrm{mon}_g: \mathrm{Spin}(V)_{w}\longrightarrow \mathrm{GL}(H^*(\mathcal{M}(w), \mathbb{Z}))
\end{equation}
for each $g\in \mathrm{Spin}(V)_w$.
\end{Lem}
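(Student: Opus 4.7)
The plan is to identify $\gamma_g$ as the cohomology class of the graph of a birational automorphism of $\mathcal{M}(w)$ induced by the Fourier--Mukai transform $\Phi_{\mathcal{P}}$, and then to extract the composition law from the convolution structure of Fourier--Mukai kernels.

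\textbf{Step 1 (induced map on moduli).} Since $g\in \mathrm{Spin}(V)_w$ stabilizes the Mukai vector $w$, the autoequivalence $\Phi_{\mathcal{P}}: D^b(A)\to D^b(A)$ preserves $w$ on the level of Chern characters. Invoking standard results on derived equivalences of abelian surfaces (Mukai, Bridgeland--Maciocia, Orlov) together with the setup of \cite{Markman1}, I would conclude that, possibly after a shift, $\Phi_{\mathcal{P}}$ induces a birational self-map $\varphi_g: \mathcal{M}(w) \dashrightarrow \mathcal{M}(w)$ defined on the open locus where $\Phi_{\mathcal{P}}$ preserves Gieseker-stability with respect to the chosen $v$-generic polarization $H$.

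\textbf{Step 2 (identification of $\gamma_g$ with the graph).} I would then show that $\gamma_g = [\overline{\Gamma_{\varphi_g}}]$ in $H^{4n+8}(\mathcal{M}(w)\times\mathcal{M}(w),\mathbb{Z})$, where $\overline{\Gamma_{\varphi_g}}$ is the closure of the graph. The key input is that for a generic $[F_1]\in \mathcal{M}(w)$ and $[F_2] = [\Phi_{\mathcal{P}}(F_1)]$, stability and the equivalence of categories yield $\mathrm{Ext}^i(F_1,\Phi_{\mathcal{P}}(F_1)) = 0$ for $i\neq 1$. Hence the derived pushforward $\pi_{24,*}[\pi_{12}^*\mathcal{E}^*\otimes\pi_{34}^*\mathcal{E}\otimes\pi_{13}^*\mathcal{P}][1]$ is represented on a dense open subset of $\overline{\Gamma_{\varphi_g}}$ by a locally free sheaf of rank equal to the codimension of the graph; its top Chern class is then Poincar\'e dual to $[\overline{\Gamma_{\varphi_g}}]$ by Grothendieck--Riemann--Roch. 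With this identification, $\gamma_g$ acts on $H^*(\mathcal{M}(w),\mathbb{Z})$ as the pullback $\varphi_g^*$ (up to the transposition convention for correspondences), which is automatically a graded ring homomorphism.

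\textbf{Step 3 (composition law).} To promote the assignment $g\mapsto \gamma_g$ to a homomorphism on $\mathrm{Spin}(V)_w$, I would use the convolution identity
\begin{equation*}
\Phi_{\mathcal{P}_1}\circ \Phi_{\mathcal{P}_2}\ \cong\ \Phi_{\mathcal{P}_1 * \mathcal{P}_2},\qquad \mathcal{P}_1 * \mathcal{P}_2\ :=\ p_{13,*}(p_{12}^*\mathcal{P}_2\otimes^{L} p_{23}^*\mathcal{P}_1),
\end{equation*}
together with Orlov's theorem identifying convolution with multiplication in $\mathrm{Spin}(V)$ under the spinor representation on $H^*(A,\mathbb{Z})$. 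This gives $\varphi_{g_1 g_2} = \varphi_{g_1}\circ \varphi_{g_2}$ birationally, and thus $\gamma_{g_1 g_2} = \gamma_{g_1}\circ \gamma_{g_2}$ as operators on cohomology, yielding the homomorphism $\mathrm{mon}: \mathrm{Spin}(V)_w \to \mathrm{GL}(H^*(\mathcal{M}(w),\mathbb{Z}))$.

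\textbf{Main obstacle.} The technical heart is Step 2. The map $\varphi_g$ is generally only birational, the universal sheaf $\mathcal{E}$ may only exist in a twisted sense (cf.\ Remark \ref{kappa class is analytic}), and the relative $\mathrm{Ext}$-complex can fail to be locally free along the exceptional locus, forcing one to handle excess intersection contributions or to work virtually. An alternative, purely cohomological route is to verify the ring homomorphism property by directly decomposing $\gamma_g$ against the K\"unneth factors of $H^*(\mathcal{M}(w),\mathbb{Z})$ and exploiting compatibility of $\gamma_g$ with the Mukai-type pairing, trading the geometric identification for detailed cohomological bookkeeping via Grothendieck--Riemann--Roch.
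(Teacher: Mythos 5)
The paper itself does not reprove this lemma: its proof is a two-line citation of \cite[Corollary 9.4]{Markman1}, where the statement is established for the larger group $G^{ev}_w\subset \mathrm{GL}(V\oplus S^+\oplus S^-)$ generated by $\mathrm{Spin}(V)$ and an involution, and the lemma follows by restriction to $\mathrm{Spin}(V)_w$. Your proposal is in effect an attempt to reconstruct the interior of Markman's argument, and it captures the right circle of ideas — identifying $\gamma_g$ with a graph class via the top Chern class of a relative $\mathrm{Ext}$-complex is exactly in the spirit of \cite[Theorem 1]{Markman3}, which this paper invokes elsewhere to identify $c_{2n+4}(\mathcal{E})$ with the class of the diagonal. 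But two of your steps, as written, have genuine gaps.

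First, the conclusion of Step 2, that $\gamma_g$ acts as $\varphi_g^*$ and is therefore ``automatically a graded ring homomorphism,'' fails when $\varphi_g$ is merely birational: for a strictly birational map between holomorphic symplectic manifolds, the correspondence induced by the closure of the graph is in general \emph{not} multiplicative (a Mukai flop is the standard counterexample; one must add correction cycles supported on products of the flopped centers to obtain a ring homomorphism). So your mechanism only works when $\Phi_{\mathcal{P}}$ preserves stability everywhere and $\varphi_g$ is biregular. Second, for general $g\in\mathrm{Spin}(V)_w$ there is no kernel $\mathcal{P}$ at all: as the paper notes just before the lemma, $\gamma_g$ for arbitrary $g$ is \emph{defined} by expressing (\ref{gamma}) via Grothendieck--Riemann--Roch as a polynomial in $ch(\mathcal{P})$ and then substituting $g$; only an arithmetic subgroup is realized by actual derived equivalences, and fewer still by stability-preserving ones. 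Hence Steps 1--3 establish the ring-homomorphism property and the composition law $\gamma_{g_1g_2}=\gamma_{g_1}\circ\gamma_{g_2}$ only on a subset of the group. The missing idea — the actual engine of \cite[Corollary 9.4]{Markman1} — is that $g\mapsto\gamma_g$ is algebraic in the entries of $g$, so both identities, once verified on a Zariski-dense locus of suitably generic elements realized geometrically, extend to all of $\mathrm{Spin}(V)_w$ by density; your closing ``alternative, purely cohomological route'' gestures toward this but does not carry it out. One of your flagged obstacles is, by contrast, vacuous here: since $w$ is the class of ideal sheaves, one has $\chi([\mathcal{O}_{pt}]\otimes w)=1$, so an honest (untwisted) universal sheaf exists on $A\times\mathcal{M}(w)$, and no twisted-sheaf technology is needed for this particular lemma.
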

\begin{proof}
    Indeed, the result has been shown in \cite[Corollary 9.4]{Markman1} for the group $$G^{ev}_w\subset GL(V\oplus S^+\oplus S^-)$$ where $G^{ev}$ is a Clifford group generated by $\mathrm{Spin}(V)$ and an involution. We hence observe that the image of $\mathrm{Spin}(V)_w$ under $\mathrm{mon}_g$ is contained in the monodromy group of $\mathcal{M}(w)$. 
\end{proof}

 Write $S^+: = H^{ev}(A, \mathbb{Z})$ and $S^- : = H^{odd}(A, \mathbb{Z})$. Then Spin$(V)$ acts on $V$, $S^+$, and $S^-$. 
Let
\vspace{3mm}
\begin{equation}
\begin{split}
& S^{+}_{\mathbb{Q}}: = S^+\otimes_{\mathbb{Z}} \mathbb{Q},\\
& S^{-}_{\mathbb{Q}}: = S^{-}\otimes_{\mathbb{Z}}\mathbb{Q},\\ 
& S_A: = H^*(A, \mathbb{Q}).\\
\end{split} 
\end{equation}
\vspace{3mm}
There is a homomorphism $\tilde{\xi}: S_A\rightarrow H^*(\mathcal{M}(w), \mathbb{Q})$, defined by
\begin{equation}\label{xi hom}
\tilde{\xi}(\lambda): = \pi_{\mathcal{M}, *}[\pi^*_A(\tau_A(\lambda))\cup ch(\mathcal{E})],
\end{equation}
for $\mathcal{E}$ a universal sheaf over $A\times \mathcal{M}(w)$. Define $S^1_A: = S^-$, $S^2_A: = S^+\cap w^{\perp}$, and for $j>2$, define $S^j_A: = S^+$ if $j$ is even, and $S^j_A: = S^-$ if $j$ is odd. There is a homomorphism 
$$
\tilde{\xi}_j: S^j_A\rightarrow H^*(\mathcal{M}(w), \mathbb{Q})
$$
for each $j$, which is induced by the inclusion $S^j_A\subset S_A$.

The next lemma gives a useful description of the cohomology of $\mathcal{M}(w)$:

\begin{Lem}\label{cohomology ring of M}
Let $T$ denote the Abelian fourfold $A\times \mathrm{Pic}^0(A)$. Then 

$$H^*(\mathcal{M}(w)) = H^*(T)\otimes H^*(Kum_n(A))^{\Gamma_w},$$
where $H^*(Kum_n(A))^{\Gamma_w}$ denotes the subalgebra of $H^*(Kum_n(A))$ invariant under translation by $\Gamma_w$.
\end{Lem}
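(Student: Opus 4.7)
The plan is to present $\mathcal{M}(w)$ as the quotient of $T \times Kum_a(w)$ by a free action of $\Gamma_w$, and then apply K\"unneth together with translation invariance on the $T$-factor.

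First I would invoke Yoshioka's theorem (\ref{yoshioka albanese}) to view the Albanese morphism $alb: \mathcal{M}(w) \to T$ as an isotrivial fibration whose fiber $Kum_a(w)$ is deformation equivalent to $Kum_n(A)$. The Abelian fourfold $T = A \times \mathrm{Pic}^0(A)$ acts on $\mathcal{M}(w)$: the factor $A$ acts by pullback along translations $t_a^*$, and $\mathrm{Pic}^0(A)$ acts by tensor product with degree-zero line bundles. This action is $alb$-equivariant for the translation action of $T$ on itself, hence transitive on the base. The finite subgroup of $T$ preserving a single fiber is identified, via Markman's construction recalled in (\ref{Gamma_v}) and Remark \ref{isomorphism of gamma and torsion points}, with $\Gamma_w$; by Lemma \ref{deformation invariant}, the induced action of $\Gamma_w$ on $Kum_a(w)$ is identified with the natural action on $Kum_n(A)$.

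The multiplication map
$$
\mu: T \times Kum_a(w) \longrightarrow \mathcal{M}(w), \qquad (t, x) \longmapsto t \cdot x,
$$
is then surjective with fibers torsors for the diagonal action $\gamma \cdot (t, x) = (t \gamma^{-1}, \gamma \cdot x)$ of $\Gamma_w$, so it is a finite \'etale Galois cover of degree $|\Gamma_w|$. K\"unneth yields
$$
H^*(\mathcal{M}(w)) = H^*(T \times Kum_a(w))^{\Gamma_w} = \bigl(H^*(T) \otimes H^*(Kum_a(w))\bigr)^{\Gamma_w}.
$$
Since $\Gamma_w \subset T$ acts on $T$ by translation, and translations act trivially on the rational cohomology of an Abelian variety, the $\Gamma_w$-action on the $H^*(T)$ factor is trivial, and the invariants split off this factor. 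Combining this with the identification $H^*(Kum_a(w)) \cong H^*(Kum_n(A))$ of $\Gamma_w$-representations furnished by Lemma \ref{deformation invariant}, the right-hand side becomes $H^*(T) \otimes H^*(Kum_n(A))^{\Gamma_w}$, as claimed.

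The main obstacle I anticipate is the careful identification of the stabilizer of a fiber of $alb$ inside $T$ with $\Gamma_w$, together with the compatibility of its action on $Kum_a(w)$ with the $\Gamma_w$-action on $Kum_n(A)$ under Yoshioka's deformation. Once that is in place, everything else is a standard application of K\"unneth and the triviality of translation on cohomology of tori. This crucial identification step is where one combines Markman's spin-theoretic description of $\Gamma_w$ in (\ref{Gamma_v}) with Yoshioka's description of the Albanese and the deformation-invariance of $\mathrm{Aut}_0$ from Lemma \ref{deformation invariant}.
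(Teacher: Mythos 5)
Your proof is correct and takes essentially the same route as the paper: the paper's proof simply cites \cite[Lemma 10.1(3)]{Markman1} (restated as Lemma \ref{Markman 10.1(3)}) for the presentation of $\mathcal{M}(w)$ as the quotient of $T\times Kum_a(w)$ by the anti-diagonal $\Gamma_w$-action, and then concludes exactly as you do via K\"unneth and the triviality of the translation action on $H^*(T)$. The stabilizer identification you flag as the main obstacle is precisely the content of that cited lemma, so your sketch of the multiplication map $\mu$ amounts to re-deriving what the paper takes as input.
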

\begin{proof}
The two main theorems of \cite{Yoshioka} yield that each fiber of the Albanese morphism $\mathcal{M}_H(v)\rightarrow A\times \mathrm{Pic}^0(A)$ (\ref{yoshioka albanese}) is of generalized Kummer type. By \cite[Lemma 10.1(3)]{Markman1}, for $v$ a primitive Mukai vector and $Kum_a(v)$ the fiber over $a$ of the Albanese map, $\mathcal{M}(v)$ is the quotient of $T\times Kum_a(v)$ by the anti-diagonal action $\tilde{\Gamma}_v$ of $\Gamma_{v}$ (see (\ref{Gamma_v}); this fact is referenced later as Lemma \ref{Markman 10.1(3)}). The action of $\Gamma_v$ on $H^*(T)$ is trivial, hence, $$H^*(T\times Kum_a(v)/\tilde{\Gamma}_{v}) = H^*(T)\otimes H^*(Kum_a(v))^{\Gamma_{v}}.$$
\end{proof}

\begin{Lem}\label{trivial character}
The trivial character of the representation 

$$\mathrm{mon}_g: \mathrm{Spin}(V)_w\rightarrow \mathrm{GL}(H^2(\mathcal{M}(w), \mathbb{Q}))
$$
arising from (\ref{mon g}) appears with multiplicity zero.
\end{Lem}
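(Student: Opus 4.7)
The plan is to decompose $H^2(\mathcal{M}(w), \mathbb{Q})$ into $\mathrm{Spin}(V)_w$-invariant subspaces and verify that each summand is free of trivial subrepresentations. First I would apply Lemma \ref{cohomology ring of M} in degree two. Since the generalized Kummer $Kum_n(A)$ is simply connected, $H^1(Kum_n(A), \mathbb{Q}) = 0$, and the lemma gives
\[
H^2(\mathcal{M}(w), \mathbb{Q}) \;=\; H^2(T, \mathbb{Q}) \;\oplus\; H^2(Kum_n(A), \mathbb{Q})^{\Gamma_w}.
\]
By Lemma \ref{deformation invariant} and Remark \ref{isomorphism of gamma and torsion points}, $\Gamma_w \cong A[n+1]$ lies in $\mathrm{Aut}_0(Kum_n(A))$ and hence acts trivially on $H^2(Kum_n(A))$; so the second summand equals $H^2(Kum_n(A), \mathbb{Q})$. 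Functoriality of the Albanese fibration $\mathcal{M}(w) \to T$ under the monodromy then guarantees that this splitting is preserved by $\mathrm{mon}_g$ for every $g \in \mathrm{Spin}(V)_w$.

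Next I would identify each summand as a representation of $\mathrm{Spin}(V)_w$. The factor $H^2(T, \mathbb{Q})$ is naturally $\wedge^2 V$, carrying the second exterior power of the vector representation, since the monodromy restricts on $H^1(\mathcal{M}(w)) \cong H^1(T) = V$ to the standard $\mathrm{Spin}(V)$-action. The factor $H^2(Kum_n(A), \mathbb{Q})$ is identified with $w^{\perp} \cap S^+_{\mathbb{Q}}$ carrying the restriction of the half-spin representation; this identification is $\mathrm{Spin}(V)_w$-equivariant via the restriction of the homomorphism $\tilde{\xi}$ from (\ref{xi hom}) to $S^2_A = w^{\perp}\cap S^+$.

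Finally I would extend scalars to $\mathbb{C}$ and exploit triality in $\mathrm{Spin}(8,\mathbb{C})$. Since $(w,w)_{S^+} = 2(n+1) \neq 0$, the stabilizer $\mathrm{Spin}(V_{\mathbb{C}})_w$ is a copy of $\mathrm{Spin}(7, \mathbb{C})$. Restricted to this subgroup, $V_{\mathbb{C}}$ becomes the $8$-dimensional spin representation of $\mathrm{Spin}(7)$, and $S^+_{\mathbb{C}}$ decomposes as $\mathbb{C}\cdot w \oplus V_7$, where $V_7$ is the $7$-dimensional standard representation. A direct computation with $\mathrm{Spin}(7)$-modules yields
\[
\wedge^2 V_{\mathbb{C}} \;\cong\; \mathfrak{so}(7) \oplus V_7, \qquad w^{\perp} \cap S^+_{\mathbb{C}} \;\cong\; V_7,
\]
and both $\mathfrak{so}(7)$ (dimension $21$) and $V_7$ (dimension $7$) are irreducible non-trivial. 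Hence $H^2(\mathcal{M}(w), \mathbb{C})^{\mathrm{Spin}(V_{\mathbb{C}})_w}$ vanishes, and since invariants commute with field extension along $\mathbb{Q} \hookrightarrow \mathbb{C}$, the same vanishing holds over $\mathbb{Q}$.

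I expect the main technical obstacle to be justifying that the decomposition $H^2(\mathcal{M}(w)) = H^2(T) \oplus H^2(Kum_n(A))^{\Gamma_w}$ is genuinely $\mathrm{mon}_g$-equivariant and that the induced action on each summand coincides with the natural representation described rather than some twist. This relies on combining the compatibility of Markman's correspondence $\gamma_g$ from (\ref{gamma}) with the Albanese morphism of $\mathcal{M}(w)$, together with the $\mathrm{Spin}(V)_w$-equivariance of $\tilde{\xi}$; the group-theoretic step after the decomposition is essentially formal.
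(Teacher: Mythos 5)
Your proposal is correct and follows essentially the same route as the paper: the same K\"{u}nneth/Albanese decomposition of $H^2(\mathcal{M}(w))$ (with $H^1(Kum_n(A))^{\Gamma_w}=0$), the identification $H^2(T)=\wedge^2 V$ with the monodromy acting through the standard representation, and the $\mathrm{Spin}(V)_w$-equivariant identification of $H^2(Kum_n(A))$ with $w^{\perp}\cap S^+$ via $\tilde{\xi}_2$, which the paper justifies exactly as you anticipate, by \cite[Corollary 8.7]{Markman1}. The only difference is that where you verify the final representation-theoretic step by hand via triality, namely $\wedge^2 V_{\mathbb{C}}\cong \mathfrak{so}(7)\oplus V_7$ and $w^{\perp}\cap S^+_{\mathbb{C}}\cong V_7$ as $\mathrm{Spin}(7,\mathbb{C})$-modules, the paper instead cites \cite[Sec. 2.1]{Munoz} for the vanishing of the $\mathrm{Spin}(7)$-invariants in $\wedge^2 V$ and asserts the irreducibility of $w^{\perp}_{\mathbb{Q}}$, so your computation is a correct, self-contained substitute.
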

\begin{proof}
By the K\"{u}nneth theorem and Lemma \ref{cohomology ring of M},

$$
H^2(\mathcal{M}(w)) = \sum_{i+j = 2} H^i(T)\otimes H^j(Kum_n(A))^{\Gamma_w},
$$
where by definition, $H^1(T, \mathbb{Z}) \cong V$ as $\mathrm{Spin}(V)$-representations and so $\mathrm{Spin}(V)_w$ acts on $H^2(T, \mathbb{Z}) = \wedge^2 V$. The $8$-dimensional standard representation of $\mathrm{Spin}(7)$ is irreducible, while at the same time the $\mathrm{Spin}(7)$-invariant subgroup of $\wedge^2 V$ occurs with multiplicity zero by \cite[Sec. 2.1.]{Munoz}. Note that $H^1(Kum_n(A))^{\Gamma_w} = 0$. We proceed by proving the lemma for $H^2(Kum_n(A))^{\Gamma_w}$. We have the equality $$H^2(Kum_n(A))^{\Gamma_w} = H^2(Kum_n(A))$$ by \cite[Lemma 10.1(4)]{Markman1}. The subspace $w^{\perp}_{\mathbb{Q}}\subset H^{ev}(A, \mathbb{Q})$ is an irreducible $\mathrm{Spin}(V)_w$-representation. We establish the $\mathrm{Spin}(V)_w$-equivariance properties of Yoshioka's Hodge isometry
\begin{equation}\label{YHI}
\xi: w^{\perp}\longrightarrow H^2(Kum_{n}(A), \mathbb{Z}),
\end{equation}
\cite[Theorem 0.2]{Yoshioka}. Note that since $H^1(Kum_{n}(A), \mathbb{Z})$, vanishes, the above isomorphism factors through the homomorphism
$$
\tilde{\xi}_2: w^{\perp}\cap S^{+}_A\longrightarrow H^2(Kum_{n}(A), \mathbb{Q}),
$$
given by (\ref{xi hom}).
We check $\tilde{\xi}_2$ is $\mathrm{mon}_g$-equivariant with respect to the action of $\mathrm{Spin}(V)_w$: We first observe that the image of $\tilde{\xi}_2$ generates $H^2(Kum_a(w), \mathbb{Q})$, as the K\"{u}nneth factors of $ch(\mathcal{E})$ generate $H^*(\mathcal{M}(w), \mathbb{Q})$ by \cite[Corollary 2]{Markman3}. For all $\lambda\in S^2_A$ and all $g\in \mathrm{Spin}(V)_{w}$ there is the equality
    $$
    \mathrm{mon}_g(\tilde{\xi}_2(\lambda)) = \tau^{ort(g)}_{\mathcal{M}}[\tilde{\xi}_2(\tau^{ort(g)}g\tau^{ort(g)}(\lambda))],
    $$
    as a special case of \cite[Corollary 8.7]{Markman1}, where $\tau$ acts on the $i$th cohomology by $(-1)^{i(i-1)/2}$. In particular, $\tilde{\xi}_2$ is $\mathrm{mon}$-equivariant with respect to the $\mathrm{Spin}(V)_w$-action.

\end{proof}
The monodromy equivariance of Yoshioka's Hodge isometry (\ref{YHI}) implies there is a direct sum decomposition 
    $$
    H^*(A) \cong \mathbb{Q}w\oplus w^{\perp}_{\mathbb{Q}}\oplus H^{odd}(A)
    $$
    of irreducible $\mathrm{Spin}(V)_w$-representations. More generally, Lemma \ref{trivial character} implies that, for $v$ the Chern character of a coherent sheaf on $A$, there is an orthogonal direct sum decomposition 
$$
S^+_{\mathbb{Q}} = \mathbb{Q}v\oplus v^{\perp}_{\mathbb{Q}}
$$

\noindent is a direct sum decomposition into irreducible $\mathrm{Spin}(V)_v$-representations. Then $S_{\mathbb{Q}}^- = H^{odd}(A)$ is also an irreducible $\mathrm{Spin}(V)_v$-representation. Identifying $H^*(A)$ with $K^{\bullet}(A)$ via the isomorphism given by the Chern character (see Lemma \ref{AH isomorphism}) yields an analogous decomposition of $K^{\bullet}(A)$:

\begin{Lem}\label{Decomposition of G-representations}
Let $v\in \Lambda(A)$ be a Chern character of a locally free sheaf on $A$. We can express $K^{\bullet}(A)$ as the direct sum of irreducible $\mathrm{Spin}(V)_v$-representations $\mathbb{Q}v\oplus v^{\perp}_{\mathbb{Q}}\oplus K^1(A)$.
\end{Lem}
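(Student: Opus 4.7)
The plan is to deduce the claim directly from the $\mathrm{Spin}(V)_v$-representation decomposition of $H^*(A, \mathbb{Q})$ established in the discussion immediately preceding the lemma, by transporting it along the Chern character isomorphism of Lemma \ref{AH isomorphism}.

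First I would observe that the ring isomorphism $ch: K^{\bullet}_{top}(A) \rightarrow H^*(A, \mathbb{Q})$ sends $K^0_{top}(A)$ onto $S^+_{\mathbb{Q}} = H^{ev}(A, \mathbb{Q})$ and $K^1_{top}(A)$ onto $S^-_{\mathbb{Q}} = H^{odd}(A, \mathbb{Q})$. The $\mathrm{Spin}(V)$-action on $H^*(A, \mathbb{Q})$ is by definition the action on the spin representations $S^+ \oplus S^-$, so it transports along $ch$ to a $\mathrm{Spin}(V)$-action on $K^{\bullet}(A)$ for which $ch$ is equivariant. Comparing (\ref{Mukai pairing}) with (\ref{Generalized Mukai pairing}) shows that $ch$ is an isometry of lattices, so orthogonal complements correspond. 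Under $ch$, the class $v$ maps to its Chern character in $S^+_{\mathbb{Q}}$, and the stabilizer $\mathrm{Spin}(V)_v$ coincides with the stabilizer of $ch(v)$ in $\mathrm{Spin}(V)$. By the orthogonal decomposition of $S^+_{\mathbb{Q}}$ recorded just before the lemma, $S^+_{\mathbb{Q}} = \mathbb{Q}v \oplus v^{\perp}_{\mathbb{Q}}$ is a direct sum of irreducible $\mathrm{Spin}(V)_v$-representations, and $S^-_{\mathbb{Q}}$ is itself irreducible; pulling back through $ch$ then yields
\begin{equation*}
K^{\bullet}(A) \;=\; \mathbb{Q}v \oplus v^{\perp}_{\mathbb{Q}} \oplus K^1(A)
\end{equation*}
as a direct sum of irreducible $\mathrm{Spin}(V)_v$-representations.

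The main point worth verifying, and the one I expect to take the most care, is the extension of the irreducibility argument of Lemma \ref{trivial character} from the specific class $w$ to an arbitrary primitive Chern character $v$. This should follow from the transitivity of the $\mathrm{Spin}(V)$-action on primitive vectors of fixed norm in $S^+_{\mathbb{Q}}$: after rescaling $v$ to the norm of $w$, there is an element of $\mathrm{Spin}(V)$ conjugating $\mathrm{Spin}(V)_v$ to $\mathrm{Spin}(V)_w$, so the representations on $v^{\perp}_{\mathbb{Q}}$ and $w^{\perp}_{\mathbb{Q}}$ are equivalent, and irreducibility in the former case reduces to the latter. With this in hand the lemma is an immediate translation via $ch$.
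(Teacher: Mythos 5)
Your overall architecture is exactly the paper's: the lemma is proved in the text by first establishing the decomposition $H^*(A)\cong \mathbb{Q}v\oplus v^{\perp}_{\mathbb{Q}}\oplus H^{odd}(A)$ into irreducible $\mathrm{Spin}(V)_v$-representations in the paragraph immediately preceding it, and then transporting this through the Chern character isomorphism of Lemma \ref{AH isomorphism}, which is equivariant and an isometry for the pairings (\ref{Mukai pairing}) and (\ref{Generalized Mukai pairing}). That part of your proposal, including the remark that stabilizers and orthogonal complements correspond under $ch$, is correct and matches the paper.

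The gap is in the step you yourself flag as the delicate one: the reduction of the irreducibility of $v^{\perp}_{\mathbb{Q}}$ under $\mathrm{Spin}(V)_v$ to the case of $w$ treated in Lemma \ref{trivial character}. Rescaling cannot match norms: $(\lambda v,\lambda v)=\lambda^2(v,v)$, so the square class of the norm in $\mathbb{Q}^{\times}/(\mathbb{Q}^{\times})^2$ is invariant under rescaling, while $(w,w)=2n+2$ and $(v,v)$ need not lie in the same square class (take $(v,v)=6$ and $n+1=2$, so $(w,w)=4$). Moreover, $\mathrm{Spin}(V)$ acts on $S^+$ by isometries of the Mukai pairing, so no element of $\mathrm{Spin}(V)$ can carry the line $\mathbb{Q}v$ to $\mathbb{Q}w$ unless their primitive integral representatives have equal norm; the transitivity statement the paper invokes from \cite[pp. 16-17]{Markman1} is precisely for primitive elements $v\in S^+$ of the \emph{same} norm as $w$, not for arbitrary primitive vectors after rescaling. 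The correct reduction, consistent with the paper's setup, is the following: $v$ is primitive by the conventions, and writing $(v,v)=2m$ (with $m\geq 3$, since $\dim \mathcal{M}(v)\geq 8$), one conjugates $v$ by some $g\in\mathrm{Spin}(V)$ to $w_m=(1,0,-m)$, the Chern character of the ideal sheaf of a length-$m$ subscheme, to which Lemma \ref{trivial character} and its proof apply verbatim — Yoshioka's Hodge isometry (\ref{YHI}) and the $\mathrm{mon}$-equivariance from \cite[Corollary 8.7]{Markman1} hold for every primitive positive Mukai vector, which is what the paper's phrase ``more generally'' is encoding. With $w_m$ in place of your rescaled $w$, your argument closes; as written, the conjugation you assert does not exist in general.
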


\begin{Prop}\label{monodromy invariance of the theta class}
Let $\theta([E^1])$ be the characteristic class defined in (\ref{theta class 2}). Then for each $i$,

$$\theta_i([E^1])\in H^{2i}(\mathcal{M}(w)\times \mathcal{M}(w), \mathbb{Q})$$ is invariant under the diagonal monodromy action.  
\end{Prop}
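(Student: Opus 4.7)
The plan is to exploit the design of $\theta([E^1])$ as a "gauge-invariant" characteristic class — one whose value does not depend on the choice of universal sheaf $\mathcal{E}$ — and then to realize the diagonal monodromy action as a cohomological instance of such a gauge change of $\mathcal{E}$. Since the diagonal monodromy action $\mathrm{mon}_g \otimes \mathrm{mon}_g$ preserves cohomological grading, it suffices to show that the total class $\theta([E^1]) = \sum_i \theta_i([E^1])$ is invariant; the graded pieces $\theta_i([E^1])$ will then follow by extracting degrees.

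First I would verify, by a direct Chern-class calculation, that $\theta([E^1])$ is unchanged under the gauge freedom $\mathcal{E} \mapsto \mathcal{E} \otimes p_{\mathcal{M}}^*L$ for any line bundle $L$ on $\mathcal{M}(w)$. The reflexive sheaf $E^1$ then transforms by the line-bundle twist $E^1 \otimes p_1^*L^{-1} \otimes p_2^*L$, so that $c_1([E^1])$ acquires a contribution $(2n+2)(-p_1^*c_1(L) + p_2^*c_1(L))$ and its Künneth components $\alpha, \beta$ shift by corresponding multiples of $c_1(L)$, while $\delta \in H^1(\mathcal{M}(w)) \otimes H^1(\mathcal{M}(w))$ is untouched (being orthogonal to the perturbation). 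Writing out the resulting transformations of $ch([E^1])$ and of $\exp(-(p_1^*\alpha + p_2^*\beta)/(2n+2))$, one checks that they exactly cancel; thus $\theta([E^1])$ is gauge invariant. The same line of reasoning shows the more standard Mukai-type class $\kappa([E^1]) = ch([E^1])\exp(-c_1([E^1])/(2n+2))$ is gauge invariant.

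Next I would realize the diagonal monodromy action as a gauge transformation at the level of universal sheaves. The correspondence $\gamma_g$ for $g \in \mathrm{Spin}(V)_w$, defined in (\ref{gamma}), is built from a Fourier-Mukai kernel $\mathcal{P}$ on $A \times A$ corresponding to an autoequivalence $\Phi_{\mathcal{P}}$ of $D^b(A)$ whose cohomological action stabilizes the Mukai vector $w$. Consequently the pulled-back sheaf $(\Phi_{\mathcal{P}} \times \mathrm{id})^*\mathcal{E}$ represents the same moduli problem as $\mathcal{E}$, so it is another universal sheaf for $\mathcal{M}(w)$, and by the standard uniqueness statement differs from $\mathcal{E}$ by a twist $\mathcal{E} \otimes p_{\mathcal{M}}^*L$ for some line bundle $L$ on $\mathcal{M}(w)$. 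Combining this with the gauge invariance established in the previous step yields $(\gamma_g \otimes \gamma_g)(\theta([E^1])) = \theta([E^1])$, completing the proof.

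The main obstacle is the second step: showing that the cohomological action of $\gamma_g \otimes \gamma_g$ on $\theta([E^1])$ is literally realized by the gauge transformation described, rather than merely agreeing with it after taking some average or symmetrization. This requires a careful application of Markman's formula (\ref{gamma}) together with the Grothendieck--Riemann--Roch theorem to track how the action decomposes through the Künneth components $\alpha, \beta, \delta$ of $c_1([E^1])$; the relation $\kappa_i([E^1]) = \theta_i([E^1]) \exp(-\delta/(2n+2))$ between $\theta$ and the intrinsically defined $\kappa$-class provides a useful cross-check, because monodromy invariance of $\kappa([E^1])$ is already established in the $K3^{[n]}$ setting of \cite{Markman2}, reducing the new content to the invariance of the correction involving $\delta$.
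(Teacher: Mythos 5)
Your first step is a correct computation and reflects exactly why $\theta$ and $\kappa$ are defined with the normalizing exponential: a twist $\mathcal{E}\mapsto \mathcal{E}\otimes \pi_{\mathcal{M}}^*L$ changes $E^1$ by $p_1^*L^{-1}\otimes p_2^*L$, shifts only the $\alpha,\beta$ Künneth components of $c_1$, and leaves $\theta([E^1])$ fixed. The genuine gap is in your second step. The relative Fourier--Mukai transform of $\mathcal{E}$ along $\mathcal{P}$ is \emph{not} another universal sheaf for the same parametrization: even when $\Phi_{\mathcal{P}}$ carries every Gieseker-stable sheaf of vector $w$ to another such sheaf (in general it produces complexes or destroys stability), it induces a nontrivial self-map $\phi$ of $\mathcal{M}(w)$, and the transformed family is $(1\times\phi)^*\mathcal{E}$ up to a line-bundle twist; the uniqueness statement for universal sheaves gives the twist only \emph{after} composing with $\phi$. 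But $\phi^*$ is essentially the monodromy operator $\mathrm{mon}_g$ you are trying to control, so asserting that the transform equals $\mathcal{E}\otimes p_{\mathcal{M}}^*L$ quietly sets $\phi = \mathrm{id}$ and makes the conclusion circular. Worse, Markman's $\mathrm{mon}_g$ is defined for \emph{all} $g\in \mathrm{Spin}(V)_w$ by formally substituting $g$ for $ch(\mathcal{P})$ in the Grothendieck--Riemann--Roch expansion of (\ref{gamma}); for general $g$ there is no actual kernel realizing it geometrically, so the "gauge transformation of $\mathcal{E}$" picture is simply unavailable for most of the group.

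The paper's proof stays purely cohomological and supplies precisely the inputs your sketch is missing. First, \cite[Theorem 1.2(2)]{Markman1} gives $(g\otimes \mathrm{mon}_g)(ch(\mathcal{E})) = ch(\mathcal{E})\,\pi^*_{\mathcal{M}(w)}ch(L_g)$ --- note the $g$ acting on the $A$-factor, which is exactly the piece your gauge picture drops; since $g$ is an isometry of the Mukai pairing, the contraction defining $ch([E^{\bullet}])$ kills the $A$-side action, yielding $(\mathrm{mon}_g\otimes \mathrm{mon}_g)(ch([E^{\bullet}])) = ch([E^{\bullet}])p_1^*ch(-L_g)p_2^*ch(L_g)$ and hence invariance of the $\kappa_i$. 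Second, passing from $\kappa$ to $\theta$ requires knowing that $\delta$ is itself monodromy invariant; this is not automatic, and the paper derives it from Lemma \ref{trivial character} (multiplicity zero of the trivial character in $H^2(\mathcal{M}(w))$ as a $\mathrm{Spin}(V)_w$-representation), which pins down $\delta = c_1([E^{\bullet}])-p_1^*c_1(e_w)+p_2^*c_1(e_w)$. Finally, the passage from $E^{\bullet}$ to $E^1$ costs the diagonal term, computed by Grothendieck--Riemann--Roch to be $d_*(td_d)$, which is monodromy invariant. Your proposed cross-check via \cite{Markman2} does not close these gaps: that reference treats the $K3^{[n]}$ setting, whereas here the $\kappa$-invariance must come from Markman's monodromy results for generalized Kummers in \cite{Markman1}.
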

\begin{proof}
We utilize the relationship between the characteristic classes $\theta$ and $\kappa$. As described in Remark \ref{description of the kappa class}, the latter is defined as
$$
\kappa([E^{\bullet}]): = ch([E^{\bullet}])\mathrm{exp}\left(\frac{-c_1([E^{\bullet}])}{2n+2}\right).
$$
Recall that the element
$$
E^{\bullet}: = R\pi_{13, *}R\mathcal{H}om(\pi_{13}^*\mathcal{E}, \pi_{23}^*\mathcal{E}) 
$$
in $D^b(\mathcal{M}(w)\times \mathcal{M}(w))$ was the class defined in (\ref{E bullet}), where $\mathcal{E}$ is a universal sheaf over $A\times \mathcal{M}(w)$. The first sheaf cohomology of $E^{\bullet}$ is $E^1$ and the second is $E^2$. Its corresponding element in $K^{\bullet}(\mathcal{M}(w)\times \mathcal{M}(w))$ is $[E^{\bullet}]$. We obtain the Chern character of $E^{\bullet}$ by contracting the tensor product $ch(\mathcal{E})^{\vee}\otimes ch(\mathcal{E})$ in $H^*(A\times \mathcal{M}(w)\times A\times \mathcal{M}(w))$ via the map
$$
c: x_1\otimes y_1\otimes x_2\otimes y_2\mapsto -(x_1, x_2)(y_1\otimes y_2),
$$
where $(\cdot, \cdot)$ is the Mukai pairing (\ref{Generalized Mukai pairing}). By \cite[Theorem 1.2(2)]{Markman1}, for each $g\in \mathrm{Spin}(V)_w$, there exists a topological complex line bundle $L_g$ such that
$$
(g\otimes \mathrm{mon}_g)(ch(\mathcal{E})) = ch(\mathcal{E})\pi^*_{\mathcal{M}(w)}ch(L_g),
$$
where $\pi_{\mathcal{M}(w)}$ is the projection map $A\times \mathcal{M}(w)\rightarrow \mathcal{M}(w)$. For $g\in \mathrm{Spin}(V)_w$ an isometry of the Mukai lattice, we may identify
$$
c\circ [(g\otimes \mathrm{mon}_g)\otimes (g\otimes \mathrm{mon}_g)] = c\circ [(1\otimes \mathrm{mon}_g)\otimes (1\otimes \mathrm{mon}_g)]. 
$$
Hence, we obtain 
\begin{equation}\label{chern character equation}
(\mathrm{mon}_g\otimes \mathrm{mon}_g)(ch([E^{\bullet}])) = ch([E^{\bullet}])p_1^*ch(-L_g)p_2^*ch(L_g),
\end{equation}
for $p_i$ the projection maps from $\mathcal{M}(w)\times \mathcal{M}(w)$. We therefore conclude that the characteristic classes $\kappa_i([E^{\bullet}])$ are invariant under the diagonal monodromy action.

 Denote $e: = [ch(\mathcal{E})]_*$. Then for $x\in H^*(A)$,
$$
e(g(x)) = \mathrm{mon}_g(e_x)\pi^*_{\mathcal{M}(w)}ch(L_g).
$$
In particular, 
$$
e_w = \mathrm{mon}_g(e_w)\pi^*_{\mathcal{M}(w)}ch(L_g).
$$
Taking the summand in $H^2(\mathcal{M}(w))$, we get 
$$
c_1(e_w) = c_1(\mathrm{mon}_g(e_w))+(w, w)c_1(L_g)
$$
and so we may write the summand of (\ref{chern character equation}) in $H^2(\mathcal{M}(w))$ as 
$$
(\mathrm{mon}_g\otimes \mathrm{mon}_g)(c_1([E^{\bullet}])) =c_1([E^{\bullet}])-p^*_1[c_1(\mathrm{mon}_g(e_w))-c_1(e_w)]+p^*_2[c_1(\mathrm{mon}_g(e_w))-c_1(e_w)].
$$
It follows that the class $c_1([E^{\bullet}])-p_1^*(c_1(e_w))+p_2^*(c_1(e_w))$ is $\mathrm{Spin(V)}_w$-invariant.  By the previous lemma, the $\mathrm{Spin}(V)_w$-invariant subspace of $H^2(\mathcal{M}(w))$ vanishes, hence, 
$$
c_1([E^{\bullet}])-p_1^*(c_1(e_w))+p_2^*(c_1(e_w))\in H^1(\mathcal{M}(w))\otimes H^1(\mathcal{M}(w)).
$$
We may thus write $\delta = c_1([E^{\bullet}])-p_1^*(c_1(e_w))+p_2^*(c_1(e_w))$, for $\delta$ in Remark \ref{decomposition of first chern class}.

Since the classes $\kappa_i([E^{\bullet}])$ are invariant under the diagonal monodromy action, it suffices observe 

$$\kappa_i([E^{\bullet}]) = \theta_i([E^{\bullet}])\left(\mathrm{exp}\left(\frac{-\delta}{2n+2}\right)\right),
$$ 
for each $i$, as we mentioned in Remark \ref{description of the kappa class}. By the $\mathrm{Spin}(V)_w$-invariance of the class $\delta$, it follows that $\theta_i([E^{\bullet}])$ is $\mathrm{Spin}(V)_w$-invariant.

Lastly, let us relate $\theta([E^{\bullet}])$ to $\theta([E^1])$. Since $[E^{\bullet}] = [E^2]-[E^1]$, we have 
$$
\theta([E^{\bullet}])-\theta([E^1]) = ch([\mathcal{O}_{\Delta}])\mathrm{exp}\left(\frac{-p_1^*\alpha-p_2^*\beta}{2n+2}\right).
$$
Recall that $\alpha$ and $\beta$ are the classes described in Remark \ref{decomposition of first chern class} and we have $\alpha = c_1(e_w)$ and $\beta = -c_1(e_w)$.
Let $d: \mathcal{M}(w)\rightarrow \mathcal{M}(w)\times \mathcal{M}(w)$ be the diagonal embedding. We have $ch(\mathcal{O}_{\Delta}) = ch(d_*\mathcal{O}_{\mathcal{M}})$. By the Grothendieck-Riemann-Roch theorem, 
$$
ch(d_*\mathcal{O}_{\mathcal{M}}) = d_*(ch(\mathcal{O}_{\mathcal{M}})td_d).
$$
By the projection formula, the difference between the two classes is 
$$
ch(\mathcal{O}_{\Delta})\mathrm{exp}\left(\frac{-p_1^*\alpha-p_2^*\beta}{2n+2}\right) = d_*\left(d^*\mathrm{exp}\left(\frac{-p_1^*\alpha-p_2^*\beta}{2n+2}\right)td_d\right).
$$
 The right-hand side is 

$$d_*(td_d),$$ which is monodromy invariant. 
\end{proof}

We make essential use of Proposition \ref{monodromy invariance of the theta class} to show that the classes $\theta_i$ remain algebraic under projective deformations of the product $\mathcal{M}_H(v)\times \mathcal{M}_H(v)$. Verbitsky's theory of hyperholomorphic sheaves is essential for this result. First, we must review Markman's description of a universal family of moduli spaces of sheaves on an Abelian surface.

\vspace{5mm}
\subsection{A universal deformation of a moduli space of sheaves}\label{universal deformation}

  We now describe the construction of a universal family of the moduli space of sheaves on an Abelian surface following \cite[Sections 12.1 and 12.5]{Markman1} and \cite{Markman6}. Recall the conventions made in the introduction for $v$ and $\mathcal{M}(v)$. Let $v^{\perp}$ denote the sublattice of $H^{ev}(A, \mathbb{Z})$ orthogonal to $v$ under the BBF pairing. Again, there exists an isometry
$$
\xi: v^{\perp}\longrightarrow H^2(Kum_a(v), \mathbb{Z})
$$
with respect to the Mukai pairing and the pairing (\ref{Generalized Mukai pairing}) by \cite[Theorem 0.2]{Yoshioka}, where $\xi$ is the composition of the homomorphism $v^{\perp}\rightarrow H^2(\mathcal{M}(v), \mathbb{Z})$ with the restriction of $H^2(\mathcal{M}(v), \mathbb{Z})$ to $H^2(Kum_a(v), \mathbb{Z})$.

The five-dimensional period domain of an IHSM $Y$ deformation equivalent to $Kum_n(A)$ is $$
\Omega_{v^{\perp}}: = \{\ell\in \mathbb{P}(v^{\perp}\otimes_{\mathbb{Z}}\mathbb{C}): (\ell, \ell) = 0, (\ell, \overline{\ell})>0\}.
$$ 
Note that $\Omega_{v^{\perp}}$ is an open analytic subset of the quadric of isotropic lines in $w^{\perp}_{\mathbb{C}}\subset S^{+}_{\mathbb{C}}: = S^+\otimes_{\mathbb{Z}} \mathbb{C}$.
\begin{Def}\label{marking}
Let $Y$ be an IHSM and fix an isometry $H^2(Y, \mathbb{Q})\rightarrow \Lambda$, for $\Lambda$ a fixed lattice, with respect to the BBF and Mukai pairings. Such an isometry is called a \textit{marking}, and $\mathfrak{M}_{\Lambda}$ denotes the moduli space of $(\Lambda-)$ marked IHSMs. 
\end{Def}

\begin{Rem}\label{remark on moduli space}
There exists a moduli space $\mathfrak{M}_{v^{\perp}}$ of marked IHSMs and a surjective and generically injective period map 
$$
Per: \mathfrak{M}^0_{v^{\perp}}\longrightarrow \Omega_{v^{\perp}}
$$
for each connected component $\mathfrak{M}^0_{v^{\perp}}$ of $\mathfrak{M}_{v^{\perp}}$, by results of \cite{Hu3}. For $\eta: H^2(Y, \mathbb{Z})\rightarrow v^{\perp}$ a fixed isometry, the period map sends the pair $(Y, \eta)$ to the line $\eta(H^{2,0}(Y))$.
There exists a universal family 
\begin{equation}\label{Kummer universal family}
p:\mathcal{Y}\longrightarrow \mathfrak{M}_{v^{\perp}}^0
\end{equation}
of IHSMs of generalized Kummer type, along with a trivialization $\eta: R^2p_*\mathbb{Z}\rightarrow \underline{v}^{\perp}$ with value $\eta_0$ at $t$, by the main result of \cite{Markman6}. 
\end{Rem}

Choose a connected component $\mathfrak{M}_{v^{\perp}}^0$ which contains the generalized Kummer $Kum_n(A)$. Let $t\in \mathfrak{M}_{v^{\perp}}^0$ denote the isomorphism class $(Kum_n(A), \eta)$ for $\eta$ a marking and let $\underline{v}^{\perp}$ be the trivial local system over $\mathfrak{M}_{v^{\perp}}^0$ with fiber $v^{\perp}$. 
Now let 
\begin{equation}\label{albanese map}
alb: \mathcal{M}(v)\rightarrow \mathrm{Alb}^1(\mathcal{M}(v))
\end{equation}
be the Albanese morphism to the Albanese variety of degree $1$. The Abelian fourfold $T: = A\times \mathrm{Pic}^0(A)$ acts on $\mathcal{M}(v)$. Let $\lambda: T\times \mathcal{M}(v)\rightarrow \mathcal{M}(v)$ be the action morphism. The \textit{anti-diagonal} action of $a\in T$ maps \begin{equation}\label{antidiagonal def}(b, F) \mapsto (b-a, \lambda(a, F)).\end{equation} For a point $a\in \mathrm{Alb}^1(\mathcal{M}(v))$, and $v$ a primitive and effective Mukai vector such that $(v, v)\geq 6$, let $Kum_a(v)$ denote the fiber of (\ref{albanese map}) over $a$. Then $Kum_a(v)$ is deformation equivalent to $Kum_n(A)$, where $n = \frac{(v, v)-2}{2}$, by \cite[Theorem 0.2]{Yoshioka}.

\begin{Lem}(\cite[Lemma 10.1(3)]{Markman1})\label{Markman 10.1(3)}
The moduli space $\mathcal{M}(v)$ is isomorphic to the quotient of $T\times Kum_a(v)$ by the anti-diagonal action of $\Gamma_v$.
\end{Lem}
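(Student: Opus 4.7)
The plan is to construct the action morphism $\mu\colon T\times Kum_a(v)\to \mathcal{M}(v)$ given by $(b,F)\mapsto \lambda(b,F)$ and to recognize it as a finite \'etale Galois cover of degree $(n+1)^4$ whose deck group is $\Gamma_v$ acting anti-diagonally. Since the source and the target are smooth of the same dimension $2n+4$, producing an isomorphism $(T\times Kum_a(v))/\Gamma_v\xrightarrow{\sim}\mathcal{M}(v)$ reduces to three tasks: (i) surjectivity of $\mu$, (ii) description of its fibers as orbits of a finite subgroup $K\subset T$ acting anti-diagonally, and (iii) identification of $K$ with the Clifford-theoretic group $\Gamma_v$ of (\ref{Gamma_v}).

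For (i) and (ii), I would exploit that the Albanese morphism (\ref{albanese map}) is equivariant with respect to an isogeny $\phi\colon T\to \mathrm{Alb}^1(\mathcal{M}(v))$ of abelian fourfolds, where $\mathrm{Alb}^1(\mathcal{M}(v))\cong A\times\mathrm{Pic}^0(A)$ by Yoshioka's description (\ref{yoshioka albanese}). Given $G\in \mathcal{M}(v)$, write $alb(G)=a+\phi(b)$ for some $b\in T$; then $\lambda(-b,G)$ lies in $Kum_a(v)$ and $\mu(b,\lambda(-b,G))=G$, proving surjectivity. An equality $\mu(b,F)=\mu(b',F')$ forces $F'=\lambda(b-b',F)$, and the constraint that both $F,F'$ lie in $Kum_a(v)$ forces $b-b'\in K:=\ker\phi$. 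Conversely, for each $c\in K$ the anti-diagonal identification $(b,F)\sim(b-c,\lambda(c,F))$ is preserved by $\mu$. Freeness of this action on $T\times Kum_a(v)$ is automatic because translation on $T$ by a non-zero element has no fixed points, so $\mu$ descends to an \'etale bijection, hence an isomorphism $(T\times Kum_a(v))/K\xrightarrow{\sim}\mathcal{M}(v)$.

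For step (iii), which is the main obstacle, I would match the geometric kernel $K=\ker\phi$ with the arithmetic group $\Gamma_v=m_v^{-1}(S^-)/V$. The strategy is to use the universal sheaf $\mathcal{E}$ on $A\times \mathcal{M}(v)$ and its K\"unneth decomposition to build a canonical map $H^1(\mathrm{Alb}^1(\mathcal{M}(v)),\mathbb{Z})\hookrightarrow S^-=H^{odd}(A,\mathbb{Z})$ whose image is exactly $m_v(V)\subset S^-$; concretely, this is the realization of $\tilde{\xi}_1$ in (\ref{xi hom}) on the odd component. Dualizing, one obtains $H_1(\mathrm{Alb}^1(\mathcal{M}(v)),\mathbb{Z})\cong m_v^{-1}(S^-)\subset V_{\mathbb{Q}}$, so that $\mathrm{Alb}^1(\mathcal{M}(v))$ is the complex torus with this period lattice. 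Since $T$ has period lattice $V$, the isogeny $\phi$ is induced by the inclusion $V\hookrightarrow m_v^{-1}(S^-)$ and therefore $K\cong m_v^{-1}(S^-)/V=\Gamma_v$. The delicate technical point is that the universal sheaf on $\mathcal{M}(v)$ is typically only twisted, so one must verify the image computation by working with the class $[E^{\bullet}]$ of (\ref{E bullet}) and with $\mathrm{Spin}(V)_v$-equivariance as in subsection \ref{monodromy}, rather than with $\mathcal{E}$ directly. Once this identification is established, the anti-diagonal $K$-action from step (ii) is precisely the anti-diagonal $\Gamma_v$-action stated in the lemma, completing the proof.
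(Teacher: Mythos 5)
Your argument is essentially viable, but it takes a genuinely different route from the paper. The paper's proof of Lemma \ref{Markman 10.1(3)} is a two-line reduction: the statement is already known for $\mathcal{M}(w)$ (it is \cite[Lemma 10.1(3)]{Markman1}), and it transfers to general $\mathcal{M}(v)$ because, by the main result of \cite{Yoshioka} (cf.\ \cite[Theorem 9.3]{Markman1}), the action of $T = A\times\mathrm{Pic}^0(A)$ on $\mathcal{M}(v)$ is conjugated, through a deformation, to an action of $A^{\prime}\times\mathrm{Pic}^0(A^{\prime})$ on $\mathcal{M}(w)$, so the quotient presentation deforms along with the pair (moduli space, torus action). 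By contrast, you reconstruct the content of Markman's cited lemma from first principles. Your covering-space skeleton is sound: surjectivity of $\mu$ via the equivariance of (\ref{albanese map}), fibers equal to anti-diagonal $K$-orbits for $K=\ker\phi$, freeness from the translation component, and then a bijective finite morphism onto a normal variety, hence an isomorphism. Your step (iii) is exactly the mechanism behind the computation the paper points to (\cite[pp. 16--17]{Markman1}, where $\mathrm{coker}(m_w)\cong A[n+1]$): identifying the period lattice of $\mathrm{Alb}^1(\mathcal{M}(v))$ with $m_v^{-1}(S^-)\subset V_{\mathbb{Q}}$ so that $\ker\phi = m_v^{-1}(S^-)/V = \Gamma_v$ as subgroups of $V_{\mathbb{R}}/V$ --- note this gives the \emph{canonical} identification required by the statement, not merely an abstract isomorphism, which is a genuine merit of your route. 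What the paper's route buys instead is brevity and the avoidance of precisely the point you flag as delicate: for general $v$ the universal sheaf on $A\times\mathcal{M}(v)$ may exist only as a twisted sheaf, so realizing $\tilde{\xi}_1$ of (\ref{xi hom}) and pinning down the image $m_v(V)\subset S^-$ requires the equivariance apparatus of subsection \ref{monodromy}, whereas for $w$ everything is classical since $\mathcal{M}(w)\cong A^{[n+1]}\times\mathrm{Pic}^0(A)$. Two details you assert but should justify: that $\phi$ is in fact an isogeny (finite kernel --- e.g.\ because a positive-dimensional subtorus of $T$ acting trivially on the Albanese base would act nontrivially on the fiber $Kum_a(v)$, which carries no holomorphic vector fields, as $h^{1,0}=0$ for an IHSM), and the surjectivity of $\phi$ used when you write $alb(G)=a+\phi(b)$, which then follows from the isogeny property.
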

\begin{proof}
    The lemma is known in the case of $\mathcal{M}(w)$. It follows for $\mathcal{M}(v)$ as the action of $A\times \mathrm{Pic}^0(A)$ on $\mathcal{M}(v)$ conjugates to an action of a deformation $A^{\prime}\times \mathrm{Pic}^0(A^{\prime})$ on $\mathcal{M}(w)$ by the main result of \cite{Yoshioka} (cf. \cite[Theorem 9.3]{Markman1}).
\end{proof}

\begin{Rem}\label{isomorphism class of deformation}
If $t_0$ represents the isomorphism class $(Kum_a(v), \eta^{\prime})$, for

$$\eta^{\prime}: H^2(Kum_a(v), \mathbb{Z})\rightarrow v^{\perp}$$
a marking defined by $\eta^{\prime}: = \eta\circ h^{-1}$, and where $h$ is a parallel transport operator, then $t_0\in \mathfrak{M}^0_{v^{\perp}}$, i.e. $t$ and $t_0$ lie in the same connected component of the moduli space.
\end{Rem}

Now let us retain the notation of subsection \ref{monodromy} and let $S^+_{\mathbb{C}}: = S^{+}\otimes_{\mathbb{Z}} \mathbb{C}$ and define $S^-_{\mathbb{C}}$ and $V_{\mathbb{C}}$ analogously. From the Clifford product there arises a homomorphism
$$
S^+_{\mathbb{C}}\otimes V_{\mathbb{C}}\longrightarrow S^-_{\mathbb{C}}.
$$
Let $\ell$ be an isotropic line in $S^{+}_{\mathbb{C}}$ and let $\ell\otimes V_{\mathbb{C}}$ be the restriction of the above homomorphism. Its kernel is $\ell\otimes Z_{\ell}$, where $Z_{\ell}$ is a maximal isotropic subspace of $V_{\mathbb{C}}$. The image of $\ell\otimes V_{\mathbb{C}}$ is a maximal isotropic subspace of $S^-_{\mathbb{C}}$ \cite[pg. 136, III.1.4]{Chevalley}. Furthermore there is a homomorphism $S^+\rightarrow \mathrm{Hom}(Z_{\ell}, S^-_{\mathbb{C}})$ induced by Clifford multiplication. Let $Q(S_{\mathbb{C}}^+)\subset \mathbb{P}(S_{\mathbb{C}}^+)$ be the quadric of isotropic lines and $IG(4, V_{\mathbb{C}})$ the Grassmanian of maximal isotropic subspaces of $V_{\mathbb{C}}$. The map $\ell\mapsto Z_{\ell}$ is an isomorphism between $Q(S^+_{\mathbb{C}})$ and a connected component $IG^+(4, V_{\mathbb{C}})$ of $IG(4, V_{\mathbb{C}})$. Similarly, there is an isomorphism between $Q(S^-_{\mathbb{C}})$ and $IG^-(4, V_{\mathbb{C}})$, where $IG^-(4, V_{\mathbb{C}})$ is the other connected component of $IG(4, V_{\mathbb{C}})$ \cite[pg. 137, III.1.6]{Chevalley}. Define
$$
\Omega_{S^+}: = \{\ell\in \mathbb{P}(S^+_{\mathbb{C}}) : (\ell, \ell) = 0, (\ell, \overline{\ell})>0\},
$$
where the pairing is given by minus the pairing (\ref{Generalized Mukai pairing}). Sending $\ell$ to $Z_{\ell}$, we get a $\mathrm{Spin}(S_{\mathbb{R}}^+)$-equivariant embedding 
$$
\zeta: \Omega_{S^+}\longrightarrow IG^+(4, V_{\mathbb{C}})
$$
of $\Omega_{S^+}$ as an open analytic subset of $IG^+(4, V_{\mathbb{C}})$. The subspace $Z_{\overline{\ell}}$ is the complex conjugate of $Z_{\ell}$. Let $J_{\ell}: V_{\mathbb{C}}\rightarrow V_{\mathbb{C}}$ be the endomorphism acting on $Z_{\ell}$ by $i$ and on $Z_{\overline{\ell}}$ by $-i$. $J_{\ell}$ leaves $V_{\mathbb{R}}$ invariant and hence $J_{\ell}$ induces a complex structure on $V_{\mathbb{R}}$. The choice of $\ell\in \Omega_{S^+}$ endows $S^+$ with an integral weight 2 Hodge structure, such that $(S^+_{\mathbb{C}})^{2,0} = \ell$, and it endows $V$ with an integral weight 1 Hodge structure, such that $V^{1, 0} = Z_{\ell}$. For $(v, v)>0$, we can consider the following alternative definition of $\Omega_{v^{\perp}}$:
$$
\Omega_{v^{\perp}}: = \{\ell\in \Omega_{S^+}: (\ell, v) = 0\},
$$
i.e. as a hyperplane section of $\Omega_{S^+}$.
Now consider the restriction of the period map from $\Omega_{S^+}$ to $\Omega_{v^{\perp}}$. The upshot is the following:
\begin{Prop}(\cite[Proposition 1.6(2)]{Markman1})\label{period domain weight 1 Hodge structures}
In addition to being the period domain of marked generalized Kummers, $\Omega_{v^{\perp}}$ is the period domain of integral weight 1 Hodge structures $(V, J_{\ell})$.
\end{Prop}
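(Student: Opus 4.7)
The plan is to verify that the construction $\ell \mapsto (V, J_\ell)$ sketched in the paragraphs preceding the proposition identifies $\Omega_{v^{\perp}}$ with the period domain of integral weight-$1$ Hodge structures on $V$. Two points require verification: first, that $(V, J_\ell)$ is genuinely an integral weight-$1$ Hodge structure for every $\ell \in \Omega_{v^{\perp}}$; second, that the assignment is a bijection onto the natural target.

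For the first point, the critical step is to show that $V_{\mathbb{C}} = Z_\ell \oplus Z_{\bar{\ell}}$ for every $\ell \in \Omega_{S^+}$ (and hence in particular on $\Omega_{v^{\perp}}$). Since $Z_\ell$ and $Z_{\bar{\ell}}$ are both maximal isotropic subspaces of $V_{\mathbb{C}}$ of complex dimension $4$, they are transverse if and only if $(\cdot,\cdot)_V$ restricts to a non-degenerate pairing between them. I would translate this into a statement about the Mukai-type pairing on $S^+$ via the Clifford multiplication $V \otimes S^+ \to S^-$ together with the description of the pairing $S^+ \otimes S^- \to V$ as its adjoint, and then extract the required non-degeneracy from the positivity condition $(\ell, \bar{\ell}) > 0$ in the definition of $\Omega_{S^+}$. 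With this in hand, $J_\ell$ acting as $\pm i$ on $Z_\ell$ and $Z_{\bar{\ell}}$ is a well-defined complex structure on $V_{\mathbb{R}}$ preserving the lattice $V$, and the resulting decomposition $V_{\mathbb{C}} = V^{1,0} \oplus V^{0,1}$ with $V^{1,0} = Z_\ell$ is an integral weight-$1$ Hodge structure.

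For the second point, injectivity is immediate from the $\mathrm{Spin}$-equivariant open embedding $\zeta: \Omega_{S^+} \hookrightarrow IG^+(4, V_{\mathbb{C}})$, $\ell \mapsto Z_\ell$, already recalled from Chevalley, together with the fact that a Hodge structure is determined by its $(1,0)$-part. For surjectivity, the target period domain of integral weight-$1$ Hodge structures $(V, J_\ell)$ is cut out of $IG^+(4, V_{\mathbb{C}})$ by the Hodge-theoretic condition corresponding to orthogonality to $v$; under $\zeta$ this condition pulls back to the hyperplane equation $(\ell, v) = 0$ defining $\Omega_{v^{\perp}} \subset \Omega_{S^+}$, so $\zeta(\Omega_{v^{\perp}})$ coincides with the target. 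The main obstacle is the transversality $Z_\ell \cap Z_{\bar{\ell}} = 0$, a linear-algebraic consequence of positivity extracted from the Clifford module structure; the remainder of the proof is bookkeeping with the spinor/vector correspondence and the Hodge-theoretic interpretation of the hyperplane condition.
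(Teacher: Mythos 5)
A point of comparison first: the paper does not actually prove this proposition \emph{cf.} its own text --- it is imported wholesale from \cite[Proposition 1.6(2)]{Markman1}, and the paragraphs preceding the statement merely recite the construction ($\zeta:\ell\mapsto Z_{\ell}$, the endomorphism $J_{\ell}$, and the realization of $\Omega_{v^{\perp}}$ as the hyperplane section $(\ell,v)=0$ of $\Omega_{S^+}$). So the comparison is with that recited construction, and your outline follows it exactly. Your first step is also the genuinely substantive one, and it is correct: transversality $Z_{\ell}\cap Z_{\bar{\ell}}=0$ is not automatic, since $\overline{Z_{\ell}}$ lies in the same family $IG^+(4,V_{\mathbb{C}})$ (the form on $V$ is split, being a sum of hyperbolic planes, so conjugation preserves the two families) and two maximal isotropics in the same family meet in even dimension $0$, $2$, or $4$. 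It does follow from $(\ell,\bar{\ell})\neq 0$ via the classical pure-spinor criterion in \cite{Chevalley}: for even pure spinors $u,w$ in dimension $8$, $Z_u\cap Z_w=0$ if and only if the scalar component of the spinor pairing of $u$ and $w$ is nonzero, and that scalar component is precisely the restriction to $S^+$ of the pairing used to define $\Omega_{S^+}$. Your equivalence ``transverse iff the pairing between $Z_{\ell}$ and $Z_{\bar{\ell}}$ is nondegenerate'' is also fine. With this, $V_{\mathbb{C}}=Z_{\ell}\oplus Z_{\bar{\ell}}$, $J_{\ell}$ is a complex structure preserving $V$, and $V^{1,0}=Z_{\ell}$ varies holomorphically in $\ell$; this is all the paper actually uses downstream (the universal torus $\mathcal{T}\to\Omega_{v^{\perp}}$ of (\ref{universal torus})).

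The weak point is your surjectivity step, which as written is circular: you define the target as the locus in $IG^+(4,V_{\mathbb{C}})$ cut out by ``the Hodge-theoretic condition corresponding to orthogonality to $v$,'' and then observe that this condition pulls back to $(\ell,v)=0$ --- but you never identify the condition intrinsically on the weight-$1$ side, so the second half asserts nothing beyond the definition of $\Omega_{v^{\perp}}$. To repair it: a weight-$1$ Hodge structure with $V^{1,0}=Z\in IG^+(4,V_{\mathbb{C}})$ determines the pure-spinor line $\ell_Z=\{s\in S^+_{\mathbb{C}}: Z\cdot s=0\}$, i.e.\ an induced weight-$2$ Hodge structure on $S^+$ with $(S^+_{\mathbb{C}})^{2,0}=\ell_Z$, and the intrinsic condition is that $v$ remain of type $(1,1)$ for this induced structure --- equivalently $v\in m(Z\otimes S^-_{\mathbb{C}})$, which one checks in the model $S^+_{\mathbb{C}}\cong \wedge^{ev}Z'$ for $Z'$ a complementary isotropic --- and since $v$ is real this is equivalent to $(\ell_Z,v)=0$. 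One further wrinkle: transversality $Z\cap\bar{Z}=0$ only yields $(\ell_Z,\bar{\ell}_Z)\neq 0$, and the sign of $(\ell_Z,\bar{\ell}_Z)$ is well defined on lines; the locus of weight-$1$ Hodge structures therefore has two components (corresponding to $J$ and $-J$), and the positivity in the definition of $\Omega_{S^+}$ selects one of them, so your surjectivity claim holds only onto that component and should say so. Neither issue affects the paper's application, which uses only the direction you prove correctly.
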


It follows that there exists a universal torus $\mathcal{T}$ and a period map \begin{equation}\label{universal torus}\pi: \mathcal{T}\rightarrow \Omega_{v^{\perp}},\end{equation} a fiber of which is a four dimensional complex torus deformation equivalent to $A\times \mathrm{Pic}^0(A)$. Let $Per^*\mathcal{T}\rightarrow \mathfrak{M}_{v^{\perp}}^0$ be the pullback by the period map of the universal torus $\pi: \mathcal{T}\rightarrow \Omega_{v^{\perp}}$.  

\begin{Rem}
The family $\mathcal{T}$ is natural; by \cite[Lemma 12.5]{Markman1} there is a global isogeny between the family of third intermediate Jacobians of $p: \mathcal{Y}\rightarrow \mathfrak{M}_{v^{\perp}}^0$ and the family $Per^*\mathcal{T}$ over $\mathfrak{M}_{v^{\perp}}^0$.
\end{Rem}

Let $\Gamma_v$ be the group given in (\ref{Gamma_v}) isomorphic to $(\mathbb{Z}/(n+1)\mathbb{Z})^4$ and let $\underline{\Gamma_v}$ be the local system associated to the group scheme $\Gamma_v$. In addition, there is a trivial local system $\mathrm{Aut}_0(p)$ of groups of automorphisms of fibers of $p$ acting trivially on the second cohomology. Therefore, the subsystem $\mathcal{Z}$ of $\mathrm{Aut}_0(p)$ acting trivially on the third cohomology also forms a trivial local system. We can thus extend the isomorphism in the diagram of Remark \ref{isomorphism of gamma and torsion points} of the fiber of $\mathcal{Z}$ over $t$ with the group $\Gamma_v$ to a trivialization $\mathcal{Z}\rightarrow \underline{\Gamma_v}$.

\begin{Rem}
A fiber of $\pi$ given by (\ref{universal torus}) is the compact torus $V_{\mathbb{R}}/V$ and hence the local system $\underline{\Gamma_v}$ embeds as a subsystem of the torsion subgroups of $Per^*\mathcal{T}$.
\end{Rem}

Let 
\begin{equation}\label{definition of family of moduli spaces}
\tilde{\mathcal{M}}: = Per^*\mathcal{T}\times_{\underline{\Gamma_v}}\mathcal{Y}
\end{equation}
be the quotient of the fiber product over $\mathfrak{M}_{v^{\perp}}^0$ by the anti-diagonal action of $\underline{\Gamma_v}$ (see (\ref{antidiagonal def})). Denote by 
\begin{equation}\label{universal family}
\Pi: \tilde{\mathcal{M}}\longrightarrow \mathfrak{M}_{v^{\perp}}^0,
\end{equation}
the universal family of the moduli space, where $\mathcal{M}_{t_0}$ denotes the fiber of $\Pi$ over $t_0$, for which $t_0$ represents the isomorphism class $(Kum_a(v), \eta^{\prime})$. The fiber $\mathcal{M}_{t_0}$ is naturally isomorphic to $\mathcal{M}(v)$, by \cite[Lemma 10.1]{Markman1}.

Recall that $E^1$ was the sheaf defined in (\ref{RelativeExtension}). Let $E^1_{F}$ denote the restriction of $E^1$ to $\{[F]\}\times \mathcal{M}(v)$ for $[F]\in \mathcal{M}(v)$.
\begin{Prop}(\cite[Theorem 13.3]{Markman1})
 The sheaf $E^1_F$ deforms with $\mathcal{M}(v)$ to a reflexive sheaf, locally free on the complement of a point over every fiber of the universal family $\Pi$.
\end{Prop}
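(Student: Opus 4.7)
The plan is to apply Verbitsky's theory of hyperholomorphic sheaves, as developed in \cite{Verbitsky1} and \cite{Verbitsky2}, to deform $E^1_F$ along twistor lines and then extend to all of $\mathfrak{M}_{v^{\perp}}^0$. The sheaf $E^1_F$ is reflexive of rank $2n+2$ on $\mathcal{M}(v)$ and locally free on the complement of the point $[F]$ by \cite[Remark 4.6]{Markman2}. To trigger Verbitsky's deformation theorem one must verify that the appropriate characteristic classes (essentially the discriminant, or equivalently the $\theta_i$- and $\kappa_i$-summands) remain of Hodge type on every twistor line through $t \in \mathfrak{M}_{v^{\perp}}^0$.

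First I would obtain the characteristic classes of $E^1_F$ by restricting the $\theta$-classes of $E^1$ on $\mathcal{M}(v)\times \mathcal{M}(v)$ along $\{[F]\}\times \mathcal{M}(v)$. By Proposition \ref{monodromy invariance of the theta class}, the classes $\theta_i([E^1])$ are invariant under the diagonal monodromy action; pulling back along the inclusion of the slice, the restricted classes are invariant under the monodromy of $\mathcal{M}(v)$ in the sense inherited from $\mathrm{Spin}(V)_w$. Combined with the corresponding $\kappa$-classes being algebraic (by their very construction via the $r$-th root formalism of Remark \ref{kappa class is analytic}), this gives Hodge classes on $\mathcal{M}(v)$ whose parallel transports to any fiber of $\Pi$ remain of Hodge type. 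In particular, the discriminant of (the restriction of) $E^1_F$ is of Hodge type on every twistor fiber.

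Next I would invoke Verbitsky's theorem: a reflexive sheaf on an IHSM whose discriminant is of Hodge type on every member of a twistor family is hyperholomorphic and thus admits a reflexive deformation along any such twistor line, possibly as a $\mathcal{C}\ell$adararu-twisted sheaf as indicated in Remark \ref{kappa class is analytic}. Since $\mathfrak{M}_{v^{\perp}}^0$ is path-connected by chains of generic twistor lines, iterating this deformation procedure extends $E^1_F$ to a reflexive (possibly twisted) sheaf on every fiber of $\Pi$. Reflexivity is preserved under hyperholomorphic deformation, and the condition of being locally free on the complement of a single point is preserved because the failure-locus of local freeness can only specialize, while the second Chern class of the deformed sheaf remains fixed, so the singular support cannot become larger-dimensional than zero, nor can it break into several points without contradicting the deformation invariance of the $\theta$-classes.

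The principal obstacle is keeping track of the twisting: a priori the hyperholomorphic deformation lives only as a twisted sheaf in the sense of C\u{a}ld\u{a}raru, so one must either arrange the twisting to vanish (using the slice structure and the fact that the projection of the Brauer class arises from the Künneth summand in $H^1\otimes H^1$) or consistently work with the untwisted characteristic classes as defined via the $r$-th root formula. The secondary obstacle, namely ensuring the singular point is preserved rather than smoothed or multiplied, is handled by the rigidity of reflexive hyperholomorphic sheaves of fixed discriminant together with a semicontinuity argument on the length of the cokernel of the double dual inclusion.
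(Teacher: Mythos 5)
You should first note that the paper does not actually prove this proposition: it is quoted directly from \cite[Theorem 13.3]{Markman1}, with the surrounding text of subsection \ref{universal deformation} only sketching the ingredients. Your framework (Verbitsky's hyperholomorphic deformation along twistor lines, chained across $\mathfrak{M}_{v^{\perp}}^0$) matches that sketch, but your proposal has a genuine gap: you never establish slope-stability of $E^1_F$. Your key assertion --- that a reflexive sheaf on an IHSM whose discriminant is of Hodge type on every member of a twistor family is hyperholomorphic --- is false as stated. Verbitsky's theorem applies to sheaves that are slope-stable with respect to the K\"{a}hler classes in question \emph{and} whose class $c_2(\mathcal{E}nd(\cdot))$ is monodromy- (equivalently $SU(2)$-) invariant; without stability the deformation theorem simply does not apply. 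The paper supplies stability by an argument you omit entirely: one chooses $[F]\in \mathcal{M}(v)$ so that $E^1_F$ is \emph{maximally twisted}, i.e.\ its class in the analytic Brauer group $H^2_{an}(\mathcal{M}(v), \mathcal{O}^*_{\mathcal{M}(v)})$ has order equal to the rank $(v,v) = 2n+2$ of $E^1_F$; a maximally twisted sheaf has no proper non-trivial subsheaves and is therefore slope-stable with respect to \emph{every} K\"{a}hler class on $\mathcal{M}(v)$. This is the linchpin of the whole deformation argument, and it cannot be recovered from the characteristic-class considerations you invoke.

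A secondary issue: the Hodge-type hypothesis is verified in the paper not through restrictions of $\theta_i([E^1])$ from the product, but via Proposition \ref{Markman Theorem 11.1} (\cite[Theorem 11.1]{Markman1}), which asserts that $c_2(\mathcal{E}nd(E^1_F))$ is $\mathrm{Spin}(V_{\mathbb{Z}})_v$-invariant, combined with \cite[Lemma 12.14]{Markman1}, which shows such an invariant class remains of Hodge type $(2,2)$ over all of $\mathfrak{M}_{v^{\perp}}^0$. Your proposed route --- restricting the diagonally-monodromy-invariant classes $\theta_i([E^1])$ on $\mathcal{M}(v)\times \mathcal{M}(v)$ to the slice $\{[F]\}\times \mathcal{M}(v)$ --- does not obviously work, since the diagonal monodromy action moves the slice, so invariance on the product does not restrict to an invariance statement for $E^1_F$ without a further argument. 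Likewise, your treatment of the conclusion that the deformed sheaf stays locally free away from a single point (semicontinuity of the cokernel of the double-dual inclusion plus an appeal to ``rigidity'') is a plausibility sketch, not a proof; in the source this is part of what \cite[Theorem 13.3]{Markman1} actually establishes. Given that the paper defers entirely to Markman here, the honest options are to cite that theorem outright or to import the maximal-twistedness/stability argument; your proposal as written does neither.
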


Let us now introduce the concept of a hyperholomorphic sheaf. There exists a sheaf $[F]\in \mathcal{M}(v)$ such that $E_F^1$ is maximally twisted, i.e. its class in the analytic Brauer group $H^2_{an}(\mathcal{M}(v), \mathcal{O}^*_{\mathcal{M}(v)})$ is of order equal to the rank $(v, v)$ of $E_F^1$. As a result, $E_F^1$ has no proper non-trivial subsheaves and is therefore slope-stable with respect to every K\"{a}hler class on $\mathcal{M}(v)$. 
\begin{Prop}(\cite[Theorem 11.1]{Markman1})\label{Markman Theorem 11.1}
 The class $c_2(\mathcal{E}nd(E_F^1))\in H^4(\mathcal{M}(v), \mathbb{Z})$ is $\mathrm{Spin}(V_{\mathbb{Z}})_v$-invariant.
\end{Prop}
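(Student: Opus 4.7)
The plan is to exhibit $c_2(\mathcal{E}nd(E^1_F))$ as the restriction of a class on $\mathcal{M}(v) \times \mathcal{M}(v)$ that is invariant under the \emph{diagonal} action of $\mathrm{Spin}(V_{\mathbb{Z}})_v$, and then to transfer this invariance through the K\"{u}nneth decomposition to the single factor $\mathcal{M}(v)$.

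First, I would exploit the twist-invariance of the endomorphism bundle: since $\mathcal{E}nd(V) \cong \mathcal{E}nd(V \otimes L)$ for any line bundle $L$, the class $c_2(\mathcal{E}nd(V))$ depends only on the twist-invariant part of the Chern character of $V$. A direct computation with Chern roots gives, for any rank-$r$ class,
\begin{equation*}
c_2(\mathcal{E}nd(V)) \;=\; 2r\, c_2(V) - (r-1)\, c_1(V)^2 \;=\; -2r\, \kappa_2([V]),
\end{equation*}
where $\kappa_2$ is the class from Remark \ref{description of the kappa class}. Applied to $V = E^1$ with $r = (v,v) = 2n+2$, this yields
\begin{equation*}
c_2(\mathcal{E}nd(E^1)) \;=\; -(4n+4)\,\kappa_2([E^1]) \;\in\; H^4(\mathcal{M}(v) \times \mathcal{M}(v)),
\end{equation*}
and by naturality of Chern classes under cohomological pullback, $c_2(\mathcal{E}nd(E^1_F)) = i_F^* c_2(\mathcal{E}nd(E^1))$, where $i_F: \mathcal{M}(v) \hookrightarrow \mathcal{M}(v) \times \mathcal{M}(v)$ sends $x \mapsto ([F], x)$.

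Next, I would invoke the analog of Proposition \ref{monodromy invariance of the theta class} for $\mathcal{M}(v)$ to conclude that $\kappa_2([E^1])$, and therefore $c_2(\mathcal{E}nd(E^1))$, is invariant under the diagonal action of $\mathrm{Spin}(V)_v$. The argument given in the excerpt for $\mathcal{M}(w)$ carries over once one has the decomposition of $K^\bullet(A)$ into irreducible $\mathrm{Spin}(V)_v$-representations from Lemma \ref{Decomposition of G-representations}, together with the vanishing of $\mathrm{Spin}(V)_v$-invariants in $H^2(\mathcal{M}(v))$. Note that $\theta_i([E^1])$ and $\kappa_i([E^1])$ differ by multiplication by $\exp(-\delta/(2n+2))$ with $\delta$ itself diagonally invariant, so passing between the two is harmless. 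Finally, I would descend the diagonal invariance through $i_F^*$: via K\"{u}nneth, this pullback is identified with projection of $H^*(\mathcal{M}(v) \times \mathcal{M}(v))$ onto the summand $H^0(\mathcal{M}(v)) \otimes H^*(\mathcal{M}(v)) \cong H^*(\mathcal{M}(v))$. Since $\mathrm{mon}_g$ preserves the grading and acts trivially on $H^0$, the diagonal action $\mathrm{mon}_g \otimes \mathrm{mon}_g$ restricts to $1 \otimes \mathrm{mon}_g$ on this summand, so any diagonally $\mathrm{Spin}(V_{\mathbb{Z}})_v$-invariant class on the product has $\mathrm{mon}_g$-invariant image in $H^*(\mathcal{M}(v))$, yielding the claim.

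The main obstacle is the extension of Proposition \ref{monodromy invariance of the theta class} from $\mathcal{M}(w)$ to general $\mathcal{M}(v)$. This rests on a $\mathrm{Spin}(V)_v$-equivariant form of Yoshioka's Hodge isometry $\xi: v^\perp \to H^2(Kum_a(v))$, together with the vanishing of $\mathrm{Spin}(V)_v$-invariants in $\wedge^2 V$; both follow by the same reasoning used for $w$ in Lemma \ref{trivial character}, relying on irreducibility of the standard $\mathrm{Spin}(V)$-representation and the decomposition of Lemma \ref{Decomposition of G-representations}.
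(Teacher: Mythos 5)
Your proposal is correct, but note that the paper does not actually prove this statement: Proposition \ref{Markman Theorem 11.1} is imported verbatim from \cite[Theorem 11.1]{Markman1}, with no internal proof to compare against. What you have written is a sound reconstruction from ingredients the paper does establish, and it is consistent with the paper's logical order. The Chern-root identity $c_2(\mathcal{E}nd(V)) = 2r\,c_2(V)-(r-1)\,c_1(V)^2 = -2r\,\kappa_2([V])$ checks out (with $r=(v,v)=2n+2$ for $E^1$), and your K\"{u}nneth descent is valid: $i_F^*$ is projection onto the $H^0\otimes H^4$ summand, on which $\mathrm{mon}_g\otimes\mathrm{mon}_g$ acts as $1\otimes\mathrm{mon}_g$, while all bidegree-$(j,4-j)$ components with $j>0$ restrict to zero at the point $[F]$. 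There is also no circularity: Proposition \ref{monodromy invariance of the theta class} precedes Proposition \ref{Markman Theorem 11.1} in the text and does not use it, and its proof gives exactly what you need, since it shows both $\theta([E^1])$ and $\delta$ are diagonally invariant and $\kappa([E^1]) = \theta([E^1])\exp(-\delta/(2n+2))$, whence $\kappa_2([E^1])$ is diagonally invariant. Your flagged extension from $w$ to general $v$ is likewise consonant with the paper, which asserts the decomposition of Lemma \ref{Decomposition of G-representations} for general $v$ immediately after Lemma \ref{trivial character}; the remaining input, the equivariance of $ch(\mathcal{E})$ up to a line-bundle twist, holds in \cite{Markman1} for general primitive $v$.

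Two caveats, neither fatal. First, the step $c_2(\mathcal{E}nd(E^1_F)) = i_F^*\,c_2(\mathcal{E}nd(E^1))$ deserves a word: $E^1$ is not locally free along the diagonal, so the sheaf restriction $E^1_F$ and the derived restriction $Li_F^*[E^1]$ can differ by Tor-correction classes; but these are supported at the single point $([F],[F])$, of codimension $2n+4$ in $\mathcal{M}(v)$, so their Chern characters live in $H^{\geq 4n+8}$ and cannot affect degree $4$. Second, your route passes through $\kappa_2$, a rational class, so it establishes invariance of $c_2(\mathcal{E}nd(E^1_F))$ in $H^4(\mathcal{M}(v),\mathbb{Q})$, hence integrally only modulo torsion, whereas the statement as quoted is with $\mathbb{Z}$-coefficients. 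For everything the paper does with this proposition (via \cite[Lemma 12.14]{Markman1} and Verbitsky's hyperholomorphy criterion, which only requires the class to stay of Hodge type $(2,2)$), rational invariance suffices, but you should state the coefficient ring you actually obtain.
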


If $c_2(\mathcal{E}nd(E_F^1))$ is $\mathrm{Spin}(V_{\mathbb{Z}})_v$-invariant, it remains of Hodge-type $(2, 2)$ over $\mathfrak{M}_{v^{\perp}}^0$, by \cite[Lemma 12.14]{Markman1}. Any sheaf $\mathcal{F}$ that satisfies the properties that 1) it is slope-stable and 2) $c_2(\mathcal{E}nd(\mathcal{F}))$ is monodromy-invariant, is called \textit{hyperholomorphic} and such a sheaf deforms to a twisted sheaf over each fiber of the universal family $\Pi$ (see \cite{Verbitsky1}).

Given an analytic subset $C$ of $\mathfrak{M}_{v^\perp}^0$, let $\tilde{\mathcal{M}}\times_C\tilde{\mathcal{M}}$ denote the Cartesian product of Markman's universal family of moduli spaces over $C$. At its core, the following proposition supplies the algebraic cycles on projective deformations of moduli spaces of sheaves:

\begin{Prop}(cf. \cite[Theorem 13.3]{Markman1})\label{twistor family}
Let $t_1$ be a point in $\mathfrak{M}^{0}_{v^{\perp}}$ parametrizing the moduli space $\mathcal{M}(v)$. For every point $t_2\in \mathfrak{M}_{v^{\perp}}^0$, there exists a connected reduced projective curve $C$ in $\mathfrak{M}_{v^{\perp}}^0$ containing $t_1$ and $t_2$ and  a flat reflexive sheaf $\mathcal{F}$ over $\tilde{\mathcal{M}}\times_C \tilde{\mathcal{M}}$, locally free away from the diagonal, which restricts to $E^1$ over $t_1$.
\end{Prop}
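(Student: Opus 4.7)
The plan is to combine the existence of twistor-line paths in $\mathfrak{M}_{v^{\perp}}^0$ with Verbitsky's deformation theory for hyperholomorphic sheaves, adapting Markman's argument from the single-factor case to the product family. First I would choose the curve $C$: since $\mathfrak{M}_{v^{\perp}}^0$ is path-connected by twistor lines (any two points can be joined by a chain of generic twistor lines in the moduli space of marked IHSMs), one can produce a connected reduced projective curve $C\subset \mathfrak{M}_{v^{\perp}}^0$ passing through $t_1$ and $t_2$, each component of which is a generic twistor $\mathbb{P}^1$. This curve pulls back to a curve in the moduli space of marked products (since the product $\tilde{\mathcal{M}}\times_C\tilde{\mathcal{M}}$ is a family over $C$).

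Next I would verify that $E^1$ satisfies the hypotheses needed to invoke hyperholomorphic deformation. By construction $E^1$ is a reflexive torsion-free sheaf on $\mathcal{M}(v)\times \mathcal{M}(v)$ of rank $(v,v)=2n+2$, locally free away from the diagonal. For the stability input, one reduces to the maximally twisted fiber $E_F^1$ from Proposition \ref{Markman Theorem 11.1}: because the order of the twist class in $H^2_{an}(\mathcal{M}(v),\mathcal{O}^*_{\mathcal{M}(v)})$ equals the rank, $E_F^1$ admits no proper non-trivial subsheaves, hence is slope-stable with respect to every Kähler class, and then the product sheaf $E^1$ is slope-stable on $\mathcal{M}(v)\times \mathcal{M}(v)$ with respect to any Kähler class of the form $p_1^*\omega_1+p_2^*\omega_2$. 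For the Chern-class input, Proposition \ref{Markman Theorem 11.1} combined with Proposition \ref{monodromy invariance of the theta class} gives that $c_2(\mathcal{E}nd(E^1))$ is invariant under the diagonal monodromy action; by the analogue of \cite[Lemma 12.14]{Markman1}, this class therefore remains of Hodge type $(2,2)$ at every fiber of the product family $\tilde{\mathcal{M}}\times_{\mathfrak{M}_{v^{\perp}}^0}\tilde{\mathcal{M}}$, and in particular along $C$.

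Having verified hyperholomorphicity, I would apply Verbitsky's theorem along each twistor component of $C$: a slope-stable reflexive sheaf whose endomorphism bundle has $c_2$ of Hodge type deforms to a (possibly twisted) reflexive sheaf over the entire twistor family, remaining locally free where the original was locally free. Applying this to $E^1$ yields a flat reflexive sheaf over $\tilde{\mathcal{M}}\times_{\mathbb{P}^1}\tilde{\mathcal{M}}$ for each twistor component, locally free away from the relative diagonal. These local deformations can be glued along the intersection points of successive twistor lines by uniqueness of the hyperholomorphic deformation (up to twist) of a slope-stable sheaf, producing a flat reflexive sheaf $\mathcal{F}$ on $\tilde{\mathcal{M}}\times_C\tilde{\mathcal{M}}$ extending $E^1$ over $t_1$.

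The main obstacle I anticipate is the product/twisted-sheaf bookkeeping: Verbitsky's theory is formulated for hyperholomorphic sheaves on a single hyperkähler manifold, whereas $\mathcal{M}(v)\times \mathcal{M}(v)$ and its deformations are only products of hyperkähler manifolds, not themselves IHSMs. One resolves this either by equipping the product with the sum of hyperkähler structures on the factors (so that a class of Hodge type $(2,2)$ in all complex structures of the product twistor family remains of Hodge type, which is the content of the product-analogue of \cite[Lemma 12.14]{Markman1}), or by viewing $E^1$ as a family over one factor of sheaves on the other and deforming factor-by-factor; in either setting, the deformed sheaves on nearby fibers will generically be twisted, which is why the statement produces a reflexive (rather than locally free) sheaf and why we must allow the class in the analytic Brauer group to vary. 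Ensuring that the gluing across components of $C$ respects both the deformed twist class and the anti-diagonal $\underline{\Gamma_v}$-quotient used in the definition (\ref{definition of family of moduli spaces}) of $\tilde{\mathcal{M}}$ is the delicate technical point.
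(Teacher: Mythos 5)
Your proposal follows essentially the same route as the paper, which states this proposition without an inline proof and defers to Markman's Theorem 13.3: the surrounding text of the paper records exactly the ingredients you assemble, namely slope-stability of $E^1_F$ via maximal twistedness of its class in the analytic Brauer group, $\mathrm{Spin}(V_{\mathbb{Z}})_v$-invariance of $c_2(\mathcal{E}nd(E^1_F))$ (Proposition \ref{Markman Theorem 11.1}) persisting as a Hodge class over $\mathfrak{M}_{v^{\perp}}^0$ by \cite[Lemma 12.14]{Markman1}, and deformation along a chain of generic twistor lines through Verbitsky's hyperholomorphic theory. Your final paragraph also correctly isolates the only point where the product case exceeds the single-factor statement of \cite[Theorem 13.3]{Markman1} — the product hyperk\"{a}hler structure, the diagonal twistor deformation implicit in the Cartesian square $\tilde{\mathcal{M}}\times_C\tilde{\mathcal{M}}$, and the varying Brauer twist — which is precisely the adaptation (carried out for the $K3^{[n]}$ case in \cite{Markman2} and \cite{CM}) that the paper's ``cf.'' citation is invoking.
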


\begin{Rem}\label{significance of the theta class}
    We can now explain the purpose of the definition of the class $\theta([E^1])$ in (\ref{theta class 2}). As Proposition \ref{twistor family} indicates, a deformation of the sheaf $E^1$ restricts over $t_2\in C$ to a twisted coherent sheaf \begin{equation}\label{twisted sheaf}E^1_{X}: = \mathcal{F}|_{X\times X},\end{equation} where $X\times X$ is the fiber over $t_2\in C$. Therefore, there is ambiguity when defining the classes $ch_i([E^1_{X}])$. However, this ambiguity disappears when we consider the classes $\theta_i([E^1_{X}])$. We have verified that the class $\theta_i([E^1])$ is monodromy invariant (Proposition \ref{monodromy invariance of the theta class}), and we can now verify the algebraicity of $\theta_i([E^1_X])$ under projective deformations of the moduli space:
\end{Rem}

\begin{Prop}\label{algebraicity of the theta class}
 Let $E^1_{X}$ denote the twisted coherent sheaf (\ref{twisted sheaf}) over the fiber $X\times X$ over $t_2$ arising from 
a projective deformation of $\mathcal{M}(w)\times \mathcal{M}(w)$. The characteristic class $\theta_i([E^1_{X}])$ is algebraic for each $i$.
\end{Prop}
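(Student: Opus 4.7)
The plan is to establish algebraicity of $\kappa_i([E^1_X])$ using Markman's $\mathrm{Sqrt}_r$ construction applied to an honest coherent sheaf arising from Proposition \ref{twistor family}, and then to deduce algebraicity of $\theta_i([E^1_X])$ using the relation between $\theta$ and $\kappa$ together with Verbitsky's hyperholomorphic deformation theory to control the correction factor.

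First, I would use the flat reflexive sheaf $\mathcal{F}$ on $\tilde{\mathcal{M}}\times_C\tilde{\mathcal{M}}$ from Proposition \ref{twistor family} and form $\mathcal{F}^{\otimes r}\otimes\det(\mathcal{F})^{-1}$, where $r=(v,v)=2n+2$ is the rank of $E^1$. Its restriction to the fiber $X\times X$ is $E^{1,\otimes r}_X\otimes \det(E^1_X)^{-1}$, which is an honest (untwisted) coherent sheaf on $X\times X$ even though $E^1_X$ itself is twisted: since the Brauer class of $E^1_X$ has order dividing $r$, the twists of $E^{1,\otimes r}_X$ and $\det(E^1_X)$ cancel. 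By GAGA on the projective variety $X\times X$, its Chern character is algebraic. Applying the formal Taylor series $\mathrm{Sqrt}_r$ from Remark \ref{kappa class is analytic} then yields $\kappa_i([E^1_X])$ as a polynomial in these algebraic classes, hence $\kappa_i([E^1_X])$ is algebraic on $X\times X$ for each $i$.

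Second, I would invoke the deformed analog of the identity from Remark \ref{description of the kappa class}, namely $\theta_i([E^1_X]) = \kappa_i([E^1_X])\cdot\mathrm{exp}(\delta_X/(2n+2))$, where $\delta_X\in H^1(X)\otimes H^1(X)$ is the K\"unneth component of the deformed class $c_1(E^{\bullet}_X) - p_1^*(c_1(e_w)_X) + p_2^*(c_1(e_w)_X)$ analogous to $\delta$ in the proof of Proposition \ref{monodromy invariance of the theta class}. Since $\delta$ is $\mathrm{Spin}(V)_w$-invariant by that proposition, Verbitsky's theory of hyperholomorphic sheaves (as used in Proposition \ref{Markman Theorem 11.1} and \cite[Lemma 12.14]{Markman1}) implies that $\delta_X$ remains of Hodge type $(1,1)$ throughout the deformation along $C$. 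Since $X\times X$ is projective, the Lefschetz $(1,1)$ theorem yields that $\delta_X$ is algebraic, so $\mathrm{exp}(\delta_X/(2n+2))$ is algebraic, and multiplying by $\kappa_i([E^1_X])$ gives algebraicity of $\theta_i([E^1_X])$ for each $i$. The main obstacle I anticipate is the rigorous identification and control of the deformed K\"unneth pieces $c_1(e_w)_X$ and $\delta_X$ on an arbitrary projective fiber via the twisted hyperholomorphic deformation, which is where Verbitsky's theory is essential.
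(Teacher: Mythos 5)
Your proposal is correct and follows essentially the same route as the paper's proof: algebraicity of $\kappa_i([E^1_X])$ via Markman's $\mathrm{Sqrt}_r$ untwisting construction (Remark \ref{kappa class is analytic}), followed by the relation $\kappa_i([E^1_X]) = \theta_i([E^1_X])\,\mathrm{exp}\left(\frac{-\delta'}{2n+2}\right)$ with the monodromy invariance of $\delta$ from Proposition \ref{monodromy invariance of the theta class} guaranteeing that the correction factor stays of Hodge type $(1,1)$, hence algebraic, on the projective fiber. You merely make explicit some steps the paper leaves implicit (GAGA for the untwisted tensor sheaf and the Lefschetz $(1,1)$ theorem for $\delta_X$), which is consistent with, not divergent from, the paper's argument.
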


\begin{proof}
    The class $\kappa_i([E^1_X])$ is algebraic for each $i$; see Remark \ref{kappa class is analytic} and the first part of the proof of Proposition \ref{monodromy invariance of the theta class}. The classes $\kappa_i$ and $\theta_i$ are related by 
    $$
    \kappa_i([E^1_X]) = \theta_i([E^1_X])\mathrm{exp}\left(\frac{-\delta^{\prime}}{2n+2}\right), 
    $$
    where $\delta^{\prime}$ is of Hodge-type $(1, 1)$ by the proof of Proposition \ref{monodromy invariance of the theta class}. Hence, $\theta_i([E^1_X])$ is obtained by taking the cup product of algebraic classes. The result then follows as a corollary of Proposition \ref{monodromy invariance of the theta class}.
\end{proof}

\textbf{\section{An algebraic correspondence on deformations of the moduli space}\label{correspondence}}

  Recall the convention that $w: = w_{n+1}$ is the Chern character of an ideal sheaf of a length-$(n+1)$ subscheme on $A$. The objective of this section is to prove the following:

\begin{Th}\label{LSCforM}
 Let $\mathfrak{M}_{w^{\perp}}^{0}$ be a connected component of $\mathfrak{M}_{w^{\perp}}$ and $\Pi: \tilde{\mathcal{M}}\rightarrow \mathfrak{M}^0_{w^{\perp}}$ be the universal family (\ref{universal family}), one fiber of which is $\mathcal{M}(w)$. Then the Lefschetz standard conjecture holds for any projective fiber of $\Pi$.
\end{Th}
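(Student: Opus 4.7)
The plan is to use Arapura's LSC result for $\mathcal{M}(w)$ as a ``seed'' and propagate it to every projective fiber of $\Pi$ by a deformation argument built on the $\theta$-classes. Fix a projective fiber $X = \mathcal{M}_{t_2}$ over some $t_2 \in \mathfrak{M}^0_{w^\perp}$. Let $t_1 \in \mathfrak{M}^0_{w^\perp}$ be the marked point representing $\mathcal{M}(w)$. By \cite[Corollary 7.9]{Ar}, the LSC holds for $\mathcal{M}(w)$, so for each $k \le n+1$ there is an algebraic self-correspondence $[\mathcal{Z}_k] \in H^{2k}(\mathcal{M}(w) \times \mathcal{M}(w), \mathbb{Q})$ inducing the inverse of $L^{n+1-k}$. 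The goal is to produce the analogous algebraic correspondence on $X \times X$.

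First, I would invoke Proposition \ref{twistor family} to obtain a connected reduced projective curve $C \subset \mathfrak{M}^0_{w^\perp}$ containing both $t_1$ and $t_2$, together with a flat reflexive sheaf $\mathcal{F}$ on $\tilde{\mathcal{M}} \times_C \tilde{\mathcal{M}}$ that is locally free off the diagonal and that restricts to $E^1$ over $t_1$. The restriction $E^1_X := \mathcal{F}|_{X \times X}$ is a twisted coherent sheaf in the sense of \cite{Caldararu}, but by Proposition \ref{algebraicity of the theta class} the characteristic classes $\theta_i([E^1_X]) \in H^{2i}(X \times X, \mathbb{Q})$ are unambiguously defined and algebraic. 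These classes form a family of algebraic building blocks globally present on every fiber of $\Pi \times_{\mathfrak{M}^0_{w^\perp}} \Pi$, and they are compatible with parallel transport by the monodromy invariance of Proposition \ref{monodromy invariance of the theta class}.

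Second, I would rewrite Arapura's correspondence $[\mathcal{Z}_k]$ in a form that deforms. Using Lemma \ref{cohomology ring of M}, $H^*(\mathcal{M}(w))$ decomposes as $H^*(T) \otimes H^*(Kum_n(A))^{\Gamma_w}$, where $T = A \times \mathrm{Pic}^0(A)$ is an abelian fourfold (for which the LSC holds). On the Kummer factor, the subring of $\Gamma_w$-invariants is generated by K\"unneth components of the Chern character of a universal sheaf (cf.\ \cite[Corollary 2]{Markman3}), and these components can be repackaged as K\"unneth components of the $\theta$-classes. Consequently, $[\mathcal{Z}_k]$ can be expressed as a polynomial in (i) the $\theta_i([E^1])$, (ii) classes pulled back from $T \times T$, and (iii) Chern classes of line bundles. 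Each of these ingredients deforms to an algebraic class over every projective fiber: the $\theta$-classes by Proposition \ref{algebraicity of the theta class}; the $T$-classes because $T$ itself deforms as a family of abelian fourfolds (the family $Per^*\mathcal{T}$ of (\ref{universal torus})) and each fiber satisfies LSC; and line bundle classes by the projectivity assumption on $X$. Specializing the polynomial identity to $t_2$ therefore produces an algebraic self-correspondence $[\mathcal{Z}_k^X] \in H^{2k}(X \times X, \mathbb{Q})$. Since hard Lefschetz and the Poincar\'e pairing are diffeomorphism invariants and the deformation preserves the identification of $\theta$-classes with their monodromy-invariant counterparts on $X$, the operator $[\mathcal{Z}_k^X]^*$ inverts $L^{n+1-k}$ on $H^*(X)$.

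The main obstacle is the middle step: promoting Arapura's abstract existence statement for $[\mathcal{Z}_k]$ to an explicit realization inside the subalgebra generated by $\theta$-classes, $T$-classes, and line bundle classes. In practice, this means rerunning Arapura's argument while tracking generators, using Lemma \ref{cohomology ring of M} to reduce the problem from $\mathcal{M}(w)$ to the $\Gamma_w$-invariant cohomology of $Kum_n(A)$ twisted with $H^*(T)$. Once one verifies that the Lefschetz inverse admits such a polynomial form on the special fiber, the rest of the argument is forced by the deformation invariance established in Section \ref{monodromy} and the hyperholomorphic machinery underlying Proposition \ref{algebraicity of the theta class}.
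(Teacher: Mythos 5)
There is a genuine gap, and it sits exactly where you flagged it, but the problem is worse than a missing verification. Your plan has two steps that fail. First, you propose to take Arapura's abstract correspondence $[\mathcal{Z}_k]$ on $\mathcal{M}(w)\times\mathcal{M}(w)$ and realize it as a polynomial in $\theta$-classes, classes from $T\times T$, and line bundle classes. Nothing in Arapura's argument produces a cycle with any relation to the universal sheaf, and there is no reason its class should lie in the subalgebra you describe, let alone extend to a flat section of $R^*q_*\mathbb{Q}$ over the curve $C$; you concede this is unresolved, and it is the crux, not a technical check. Second, and more fundamentally, even if $[\mathcal{Z}_k]$ deformed flatly, the property of being \emph{the inverse of} $L^{2n+4-k}$ (note: $\dim_{\mathbb{C}}\mathcal{M}(w)=2n+4$, so your exponent $n+1-k$ is off) is not transportable: $L$ is cup product with an ample class, the generic fiber over $C$ is not even projective, and at the projective fiber $X$ the polarization bears no relation via parallel transport to the one on $\mathcal{M}(w)$. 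So your concluding sentence, that $[\mathcal{Z}_k^X]^*$ inverts the Lefschetz power on $H^*(X)$, does not follow from deformation invariance of the $\theta$-classes.

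The paper avoids both problems by never deforming a Lefschetz inverse at all. It builds its own self-adjoint algebraic correspondence directly from $\theta([E^1])$ (the class $g$ of (\ref{g})), proves by representation theory of $\mathrm{Spin}(V)_w$ and Markman's generation theorem \cite[Corollary 2]{Markman3} that the induced map surjects onto the quotient $\overline{H}^i(\mathcal{M}(w)) = H^i/[R^i+Alg^i]$ (Propositions \ref{surjectivity}, \ref{mapscoincide}, \ref{surjective correspondence} and Corollary \ref{f and g}), and then invokes Charles's criterion (Corollary \ref{CharlesCorollary}): if LSC holds up to degree $i-1$ and some algebraic correspondence surjects onto $\overline{H}^i(X)$, then LSC holds in degree $i$. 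Surjectivity of a fiberwise map of local systems \emph{is} stable under parallel transport along $C$ (unlike polarization-dependent conditions), so the flat section through $-\theta(E^1)$, algebraic on $X\times X$ by Proposition \ref{algebraicity of the theta class}, yields the surjection on $X$, and induction on degree finishes the proof. This reduction of LSC to a transport-stable surjectivity statement, via the quotient $\overline{H}^i$ and induction, is the key idea your proposal is missing; your first two ingredients (Proposition \ref{twistor family} and the algebraicity of $\theta_i([E^1_X])$) do match the paper's setup.
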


This is a generalization of the result of \cite[Corollary 7.9.]{Ar}, which states that the Lefschetz standard conjecture holds for the moduli space $\mathcal{M}_H(v)$, where $v$ is the Chern character of a Gieseker-stable sheaf on $A$ and $H$ is a $v$-generic polarization. This is indeed a generalization; the generic fiber of the family $\Pi$ is not a moduli space of sheaves, as moduli spaces appear over a three dimensional family of polarized Abelian surfaces, while projective fibers of $\Pi$ appear over a four dimensional moduli space of polarized IHSMs of generalized Kummer type. The proof of Theorem \ref{LSCforM} boils down to a reworking of the main result of \cite{CM} for the case of moduli spaces of sheaves on an Abelian variety and their projective deformations.

\subsection{A self-adjoint algebraic correspondence}\label{self adjoint correspondence}

  We describe correspondences in $H^*(\mathcal{M}(w)\times \mathcal{M}(w), \mathbb{Q})$, which, after a normalization, satisfy the assumptions of Corollary \ref{CharlesCorollary}. In subsection \ref{LSCM} we relate these correspondences to the algebraic correspondences arising from the $\theta$-classes. These results are in turn applied to prove Theorem \ref{LSCforM}, as we have established that the correspondences deform in the universal family $\Pi$. A key property of these correspondences is that they are \textit{self-adjoint}. We recall the basic definition of adjoint operators on vector spaces: 
\begin{Def}
 Let $W_1$ and $W_2$ be vector spaces over $\mathbb{Q}$ with associated symmetric bilinear forms $(\cdot, \cdot)_{W_i}$. Let $f: W_1\rightarrow W_2$ be a homomorphism. Then the \textit{adjoint of $f$}, which we denote by $f^{\dagger}$, is defined by the property
 $$
 (x, f^{\dagger}(y))_{W_1}: = (f(x), y)_{W_2}.
 $$
 
\noindent We say $f$ is \textit{self-adjoint} if $W_1 = W_2$ and $f = f^{\dagger}$.
\end{Def}

Let $w\in \Lambda(A)$ be the element of the Mukai lattice representing the ideal sheaf of a length $(n+1)$-subscheme on $A$. Define the maps

\begin{equation}
\begin{split}
& h_w: K^{\bullet}(A)\rightarrow \mathbb{Q}w,\\
& h_{w^{\perp}}: K^{\bullet}(A)\rightarrow w_{\mathbb{Q}}^{\perp},\\
& h_{1}: K^{\bullet}(A)\rightarrow K^1(A)
\end{split}
\end{equation}

\noindent to be orthogonal projections with respect to the generalized Mukai pairing on $K^{\bullet}(A)$ (see Remark \ref{Generalized Mukai pairing}).

Let $\pi_i$ be the projection of $A\times \mathcal{M}(w)$ onto the $i$th factor. Let $\mathcal{E}$ be a universal sheaf on $A\times \mathcal{M}(w)$ and let $[\mathcal{E}]$ denote the corresponding class in $K^{\bullet}(A\times \mathcal{M}(w))$. Define $\phi^{\prime}: K^{\bullet}(A)\rightarrow K^{\bullet}(\mathcal{M}(w))$ via 

\begin{equation}\label{phi prime}
\phi^{\prime}(x): = \pi_{2_!}(\pi_1^{!}(x)\otimes [\mathcal{E}]).
\end{equation}

Now let $\psi^{\prime}: K^{\bullet}(\mathcal{M}(w))\rightarrow K^{\bullet}(A)$ be the right adjoint of $\phi^{\prime}$ with respect to the Mukai pairings on $K^{\bullet}(A)$ and $K^{\bullet}(\mathcal{M}(w))$. It is a general fact (see for example \cite[Def. 5.7 and Prop. 5.9]{Hu2}) that given a Fourier-Mukai functor $\Phi: D^b(A)\rightarrow D^b(\mathcal{M}(w))$ with Fourier-Mukai kernel $\mathcal{F}$, the Fourier-Mukai functor $\Psi: D^b(\mathcal{M}(w))\rightarrow D^b(A)$ with Fourier-Mukai kernel $\mathcal{F}^{\vee}\otimes \pi_1^*\omega_A[2]$ is the right adjoint of $\Phi$. Since the dualizing object in $D^b(A)$ is trivial, $\mathcal{F}^{\vee}[2] \cong \mathcal{F}^{\vee}\otimes \pi_1^*\omega_A[2]$ in $D^b(A\times \mathcal{M}(w))$. The $K$-theoretic analogue of this statement is that for the class of a universal sheaf $[\mathcal{E}]\in K^{\bullet}(A\times \mathcal{M}(w))$, the classes $[\mathcal{E}^{\vee}]$ and $[\mathcal{E}^{\vee}\otimes \pi_1^*\omega_A[2]]$ coincide. Therefore, we may describe $\psi^{\prime}$ via

\begin{equation}\label{psi prime}
\psi^{\prime}(y): = \pi_{1_{!}}(\pi_2^{!}(y)\otimes [\mathcal{E}^{\vee}]).
\end{equation}

 Consider next the correspondence 
 \begin{equation}\label{f tilde}
 \tilde{f}^{\prime}: = \phi^{\prime}\circ \psi^{\prime}.
 \end{equation}
 The following is a theorem of Mukai (cf. \cite[Proposition 5.10]{Hu2}): 
 \begin{Prop}
 Let $\Phi_{\mathcal{P}}: D^b(X)\rightarrow D^b(Y)$ and $\Phi_{\mathcal{Q}}: D^b(Y)\rightarrow D^b(Z)$ be Fourier-Mukai transforms with Fourier-Mukai kernels $\mathcal{P}$ and $\mathcal{Q}$ respectively. Then the composition $\Phi_{\mathcal{Q}}\circ \Phi_{\mathcal{P}}: D^b(X)\rightarrow D^b(Z)$ is isomorphic to the Fourier-Mukai transform $\Phi_{\mathcal{R}}$, where $\mathcal{R}\in D^b(X\times Z)$ is the Fourier-Mukai kernel
 $$
 \mathcal{R}: = \pi_{13, *}(\pi^*_{12}\mathcal{P}\otimes \pi^*_{23}\mathcal{Q}).
 $$
 \end{Prop}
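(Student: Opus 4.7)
The plan is to verify the claimed isomorphism by unwinding the composition $\Phi_{\mathcal{Q}} \circ \Phi_{\mathcal{P}}$ on the triple product $X \times Y \times Z$ using flat base change and the projection formula, and check that it agrees with $\Phi_{\mathcal{R}}$ for $\mathcal{R} = \pi_{13,*}(\pi_{12}^*\mathcal{P} \otimes \pi_{23}^*\mathcal{Q})$.

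Let $p_i$ and $p_{ij}$ denote the projections of $X \times Y \times Z$ to the $i$-th factor and to the $i,j$-th pair respectively, and let $q_X, q_Y$ (resp. $r_Y, r_Z$) denote the projections from $X \times Y$ (resp. $Y \times Z$). For $\mathcal{F} \in D^b(X)$, the definition of the Fourier-Mukai transforms gives
\begin{equation*}
(\Phi_{\mathcal{Q}} \circ \Phi_{\mathcal{P}})(\mathcal{F}) = r_{Z,*}\bigl(r_Y^*\,q_{Y,*}(q_X^*\mathcal{F} \otimes \mathcal{P}) \otimes \mathcal{Q}\bigr).
\end{equation*}
Since the square with projections $p_{12}, p_{23}$ covering $q_Y, r_Y$ is Cartesian and all morphisms are flat (in fact smooth and projective), derived flat base change yields $r_Y^* q_{Y,*} = p_{23,*}\, p_{12}^*$. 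Substituting and applying the projection formula with respect to $p_{23}$ gives
\begin{equation*}
(\Phi_{\mathcal{Q}} \circ \Phi_{\mathcal{P}})(\mathcal{F}) = r_{Z,*}\,p_{23,*}\bigl(p_{12}^*(q_X^*\mathcal{F} \otimes \mathcal{P}) \otimes p_{23}^*\mathcal{Q}\bigr) = p_{3,*}\bigl(p_1^*\mathcal{F} \otimes p_{12}^*\mathcal{P} \otimes p_{23}^*\mathcal{Q}\bigr),
\end{equation*}
using $r_Z \circ p_{23} = p_3$ and the identities $p_{12}^* q_X^* = p_1^*$ and $p_{12}^*\mathcal{P} \otimes p_{12}^*(\cdots) = p_{12}^*(\mathcal{P}\otimes\cdots)$.

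For the other side, let $s_X, s_Z$ be the projections from $X \times Z$. Again by the projection formula, this time for $p_{13}$, and the identity $p_{13}^* s_X^* = p_1^*$,
\begin{equation*}
\Phi_{\mathcal{R}}(\mathcal{F}) = s_{Z,*}\bigl(s_X^*\mathcal{F} \otimes p_{13,*}(p_{12}^*\mathcal{P} \otimes p_{23}^*\mathcal{Q})\bigr) = s_{Z,*}\,p_{13,*}\bigl(p_1^*\mathcal{F} \otimes p_{12}^*\mathcal{P} \otimes p_{23}^*\mathcal{Q}\bigr),
\end{equation*}
which equals $p_{3,*}(p_1^*\mathcal{F} \otimes p_{12}^*\mathcal{P} \otimes p_{23}^*\mathcal{Q})$ since $s_Z \circ p_{13} = p_3$. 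Comparing expressions shows that both functors agree on each object $\mathcal{F}$; checking naturality of these identifications in $\mathcal{F}$ is routine since every step is an isomorphism of functors.

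The only real technical step is the invocation of derived flat base change; since $X, Y, Z$ are smooth projective varieties, the maps $q_Y$ and $r_Y$ are smooth and the hypothesis is automatic, so there is no obstacle. The remaining manipulations are bookkeeping with the projection formula and the compatibility $p_{12}^* q_X^* = p_1^* = p_{13}^* s_X^*$, which follows from functoriality of pullback. Thus the main work is notational: keeping track of the various projections from the triple product.
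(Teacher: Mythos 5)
Your proof is correct: the base change square you use is genuinely Cartesian ($X\times Y\times Z \cong (X\times Y)\times_Y (Y\times Z)$), the flatness hypothesis holds since the projections are smooth, and the projection-formula bookkeeping on both sides is carried out accurately, with the naturality point properly acknowledged. The paper itself offers no proof of this proposition --- it simply cites it as a theorem of Mukai via \cite[Proposition 5.10]{Hu2} --- and your argument is precisely the standard one given in that reference, so there is nothing to flag.
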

 
 This implies the following:
 
 \begin{Cor}
  Let $p_i$ denote the projection of $\mathcal{M}(w)\times \mathcal{M}(w)$ onto the $i$th factor. The map $\tilde{f}^{\prime}: K^{\bullet}(\mathcal{M}(w))\rightarrow K^{\bullet}(\mathcal{M}(w))$ given by (\ref{f tilde}) may be described via

\begin{equation}
\tilde{f}^{\prime}(x): = p_{2_!}(p_1^{!}(x)\otimes [E^{\bullet}]),
\end{equation}
where $[E^{\bullet}]$ is the class given by (\ref{KclassRelativeExtension}).
\end{Cor}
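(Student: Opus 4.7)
The plan is to apply the Mukai composition formula stated in the preceding proposition, read on the level of K-theory. First I would recognize $\phi^{\prime}$ and $\psi^{\prime}$ as the K-theoretic realizations of Fourier--Mukai functors. Concretely, $\phi^{\prime}$ is the K-theoretic image of the Fourier--Mukai functor $D^b(A)\to D^b(\mathcal{M}(w))$ with kernel $\mathcal{E}\in D^b(A\times \mathcal{M}(w))$. The definition (\ref{psi prime}) of $\psi^{\prime}$ uses $[\mathcal{E}^{\vee}]$, which coincides with $[\mathcal{E}^{\vee}\otimes \pi_1^*\omega_A[2]]$ in $K^{\bullet}(\mathcal{M}(w)\times A)$ because $\omega_A$ is trivial and shifts by an even integer act as $+1$ on K-theory classes; this is exactly the kernel of the right adjoint Fourier--Mukai functor quoted in the text above (\ref{psi prime}).

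Next I would apply the Mukai proposition with $X=Z=\mathcal{M}(w)$ and $Y=A$, taking the two kernels to be $\mathcal{P}=\mathcal{E}^{\vee}$ (for $\psi^{\prime}$) and $\mathcal{Q}=\mathcal{E}$ (for $\phi^{\prime}$). The proposition says that $\phi^{\prime}\circ \psi^{\prime}$ is the K-theoretic Fourier--Mukai transform with kernel
$$
[\mathcal{R}] \,:=\, \bigl[R\pi_{13,*}\bigl(\pi_{12}^*\mathcal{E}^{\vee}\otimes^{L} \pi_{23}^*\mathcal{E}\bigr)\bigr]\,\in\, K^{\bullet}(\mathcal{M}(w)\times \mathcal{M}(w)),
$$
where the $\pi_{ij}$ are the projections from $\mathcal{M}(w)\times A\times \mathcal{M}(w)$. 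Rewriting the composition according to the formula
$\tilde{f}^{\prime}(x)=p_{2_!}(p_1^!(x)\otimes [\mathcal{R}])$ identifies $[\mathcal{R}]$ as the kernel of $\tilde{f}^{\prime}$.

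The final step is to match $[\mathcal{R}]$ with $[E^{\bullet}]$. Since $\mathcal{E}$ is perfect as a coherent sheaf on the smooth variety $A\times \mathcal{M}(w)$, the standard derived isomorphism
$$
R\mathcal{H}om\bigl(\pi_{12}^*\mathcal{E},\,\pi_{23}^*\mathcal{E}\bigr) \;\simeq\; \pi_{12}^*\mathcal{E}^{\vee}\otimes^{L} \pi_{23}^*\mathcal{E}
$$
holds on $\mathcal{M}(w)\times A\times \mathcal{M}(w)$. Applying $R\pi_{13,*}$ and taking classes in K-theory gives $[\mathcal{R}]=[E^{\bullet}]$ exactly as defined in (\ref{E bullet}) and (\ref{KclassRelativeExtension}), completing the identification.

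I expect the only mild obstacle to be the bookkeeping of shift, dualizing-object, and $\pi^!$ versus $\pi^*$ conventions needed to pass from the derived categorical Mukai formula to its K-theoretic counterpart. Because $A$ has trivial canonical bundle, $\pi^!$ differs from $\pi^*$ only by an even shift that is invisible in K-theory (with rational coefficients), and the relevant sign has already been absorbed in the identification $[\mathcal{E}^{\vee}]=[\mathcal{E}^{\vee}\otimes \pi_1^*\omega_A[2]]$ stated in the text. Once these conventions are aligned, the result is a direct consequence of Mukai's composition formula together with the derived-dual description of $R\mathcal{H}om$.
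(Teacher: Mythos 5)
Your proposal is correct and follows essentially the same route as the paper, which likewise deduces the corollary directly from Mukai's composition formula (quoted as the preceding proposition, cf.\ \cite[Proposition 5.10]{Hu2}) applied with kernels $\mathcal{E}^{\vee}$ and $\mathcal{E}$, using the already-noted identification $[\mathcal{E}^{\vee}] = [\mathcal{E}^{\vee}\otimes \pi_1^*\omega_A[2]]$ from the triviality of $\omega_A$. Your additional step identifying $R\pi_{13,*}(\pi_{12}^*\mathcal{E}^{\vee}\otimes^{L}\pi_{23}^*\mathcal{E})$ with $E^{\bullet}$ via $R\mathcal{H}om(\pi_{12}^*\mathcal{E},\pi_{23}^*\mathcal{E})\simeq \pi_{12}^*\mathcal{E}^{\vee}\otimes^{L}\pi_{23}^*\mathcal{E}$ (valid since $\mathcal{E}$ is perfect on the smooth quasi-projective $A\times\mathcal{M}(w)$) is exactly the implicit bookkeeping the paper leaves to the reader.
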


\begin{Cor}
The correspondence $\tilde{f}^{\prime}$ given by (\ref{f tilde}) is self-adjoint.
\end{Cor}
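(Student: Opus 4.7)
The plan is to derive self-adjointness of $\tilde{f}^{\prime}=\phi^{\prime}\circ\psi^{\prime}$ from the adjunction $(\phi^{\prime}(x),y)_{\mathcal{M}(w)}=(x,\psi^{\prime}(y))_A$ together with the symmetry of the Mukai pairings on $K^{\bullet}(A)$ and $K^{\bullet}(\mathcal{M}(w))$. The first step is to record this symmetry. Both $A$ (an Abelian surface) and $\mathcal{M}(w)$ (an irreducible holomorphic symplectic manifold of complex dimension $2n+2$) have trivial canonical bundle and even dimension, so Serre duality gives $\chi(E^{\vee}\otimes F)=\chi(F^{\vee}\otimes E)$ at the level of $K$-theoretic Euler pairings; hence the (generalized) Mukai pairing $(x,y)=-\chi(x^{\vee}\otimes y)$ is symmetric on each side.

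Next I would just unwind the definitions. For $x,y\in K^{\bullet}(\mathcal{M}(w))$ the adjunction applied with $\psi^{\prime}(x)$ in the first slot gives
\[
(\tilde{f}^{\prime}(x),y)_{\mathcal{M}(w)}=(\phi^{\prime}(\psi^{\prime}(x)),y)_{\mathcal{M}(w)}=(\psi^{\prime}(x),\psi^{\prime}(y))_A.
\]
On the other hand, using symmetry on $\mathcal{M}(w)$ first, then the same adjunction, then symmetry on $A$,
\[
(x,\tilde{f}^{\prime}(y))_{\mathcal{M}(w)}=(\phi^{\prime}(\psi^{\prime}(y)),x)_{\mathcal{M}(w)}=(\psi^{\prime}(y),\psi^{\prime}(x))_A=(\psi^{\prime}(x),\psi^{\prime}(y))_A.
\]
Comparing these two expressions yields $(\tilde{f}^{\prime}(x),y)_{\mathcal{M}(w)}=(x,\tilde{f}^{\prime}(y))_{\mathcal{M}(w)}$, which is exactly the claim that $\tilde{f}^{\prime}$ is self-adjoint.

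There is essentially no obstacle of substance here; the only point that is not formal is the symmetry of the Mukai pairing, which I would either cite from Mukai's original work or remark on explicitly via Serre duality (trivial canonical bundle and even dimension making the Serre twist $(-1)^{\dim}=1$). Everything else is a direct bookkeeping computation using the defining property of the right adjoint $\psi^{\prime}$, so the corollary is really just a formal consequence of the previous corollary (identifying $\tilde{f}^{\prime}$ as the correspondence induced by $[E^{\bullet}]$) together with the symmetry of the pairings.
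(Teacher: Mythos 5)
Your proposal is correct and matches the paper's (implicit) argument: the paper states this corollary without proof precisely because, once $\psi^{\prime}$ is defined as the adjoint of $\phi^{\prime}$ with respect to the Mukai pairings, the identity $(\tilde{f}^{\prime})^{\dagger}=(\phi^{\prime}\circ\psi^{\prime})^{\dagger}=\phi^{\prime}\circ\psi^{\prime}=\tilde{f}^{\prime}$ is the formal bookkeeping you carry out, and the symmetry of the pairings is already built into the paper's framework (its definition of adjoint is stated only for symmetric bilinear forms, and the $\tau$-twist in the generalized Mukai pairing guarantees symmetry since both $A$ and $\mathcal{M}(w)$ have real dimension divisible by $4$). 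Your Serre-duality justification of that symmetry, using trivial canonical bundle and even complex dimension, is a reasonable way to make explicit what the paper leaves tacit.
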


 Conjugate $\tilde{f}^{\prime}$ by the map $\tilde{ch}: = ch\sqrt{td_{\mathcal{M}}}: K^{\bullet}(\mathcal{M}(w))\rightarrow H^*(\mathcal{M}(w))$ defined by $[F]\mapsto ch([F])\sqrt{td_{\mathcal{M}}}$ to get $f^{\prime}: = \tilde{ch}\circ \tilde{f}^{\prime}\circ (\tilde{ch})^{\dagger}: H^*(\mathcal{M}(w))\rightarrow H^*(\mathcal{M}(w))$. Note that $\Tilde{ch}$ is an isometry and hence $(\Tilde{ch})^{-1} = (\Tilde{ch})^{\dagger}$. The correspondence $f^{\prime}$ is the self-adjoint correspondence given by 
\begin{equation}\label{AlgebraicCorrespondence}
   f^{\prime}: =  ch([E^{\bullet}])\sqrt{td_{\mathcal{M}(w)\times \mathcal{M}(w)}}.
\end{equation}
Note also that $f^{\prime}$ is an algebraic correspondence. By Remark \ref{decomposition of first chern class}, we write
$$
 c_1([E^{\bullet}]) = -p_1^*\alpha - p_2^*\beta - \delta.
$$
Now let $\zeta \in K^{\bullet}(\mathcal{M}(w))$ be a class which satisfies $ch(\zeta) = \mathrm{exp}(\frac{-\alpha}{2n+2})$ and $\zeta^{\prime}$ be a class that satisfies $ch(\zeta^{\prime}) = \mathrm{exp}(\frac{\beta}{2n+2})$. Let $\tau_{\zeta}: K^{\bullet}(\mathcal{M}(w))\rightarrow K^{\bullet}(\mathcal{M}(w))$ be defined by $\tau_{\zeta}(x) = x\otimes \zeta$. We define the following maps:

\begin{equation}\label{normalization}
 \begin{split}
     & \phi: = \tau_{\zeta}\circ \phi^{\prime},\\
     & \psi: =  \psi^{\prime} \circ \tau_{\zeta^{\prime}}^{-1},\\
     & \tilde{f}: = \phi\circ \psi,\\
     & f: = \tilde{ch}\circ \tilde{f}\circ (\tilde{ch})^{\dagger}.
 \end{split}  
\end{equation}
A complete description of $f$ is then given by the self-adjoint algebraic class
\begin{equation}\label{finalcorrespondence}
    ch([E^{\bullet}])\mathrm{exp}\left(\frac{-p_1^*\alpha-p_2^*\beta}{2n+2}\right)\left(\sqrt{td_{\mathcal{M}\times\mathcal{M}}}\right).
\end{equation}

Let

\begin{equation}
\begin{split}
& \eta_{w}: \mathbb{Q}w\rightarrow H^*(\mathcal{M}(w)),\\
& \eta_{w^{\perp}}: w_{\mathbb{Q}}^{\perp}\rightarrow H^*(\mathcal{M}(w)),\\
& \eta_1: K^1(A)\rightarrow H^*(\mathcal{M}(w))
\end{split}
\end{equation}
be the restrictions of $\eta:= \tilde{ch}\circ \phi$ to $\mathbb{Q}w$, $w_{\mathbb{Q}}^{\perp}$, and $K^1(A)$ respectively. Then we have the equalities 
\begin{equation}{\label{AdjointReps}}
h_w\circ \eta^{\dagger} = (\eta_w)^{\dagger}, \hspace{5mm} h_{w^{\perp}}\circ \eta^{\dagger} = (\eta_{w^{\perp}})^{\dagger}, \hspace{5mm} h_1\circ \eta^{\dagger} = (\eta_1)^{\dagger}.
\end{equation}

Denote by $pr_i: H^{*}(\mathcal{M}(w))\rightarrow H^i(\mathcal{M}(w))$ the projection onto the $i$th factor. Define 
\begin{equation}\label{projection of eta}
\eta_{i, w}: = pr_i\circ \eta_w, \hspace{5mm} \eta_{i, w^{\perp}}: = pr_i\circ \eta_{w^{\perp}}, \hspace{5mm} \eta_{i, 1}: = pr_i\circ \eta_1.
\end{equation}

Denote by $$\iota_{4n+8-i}: H^{4n+8-i}(\mathcal{M}(w))\rightarrow H^*(\mathcal{M}(w))$$ the inclusion. Then the corresponding adjoint maps are given by
\vspace{3mm}
\begin{equation}
\begin{split}
    &(\eta_{i, w})^{\dagger} = h_w\circ \eta^{\dagger}\circ \iota_{4n+8-i}, \\
    &(\eta_{i, w^{\perp}})^{\dagger} = h_{w^{\perp}}\circ \eta^{\dagger}\circ \iota_{4n+8-i},\\
     &(\eta_{i, 1})^{\dagger} = h_{1}\circ \eta^{\dagger}\circ \iota_{4n+8-i}.
     \end{split}
\end{equation}
\vspace{3mm}

 Lastly, we define \begin{equation}\label{f sub i}f_i: = pr_i\circ f\circ \iota_{4n+8-i}\end{equation} and

\begin{equation}\label{eta}
\eta_i: = pr_i\circ \eta.
\end{equation}
We sum up these constructions by noting that there is a commutative diagram (cf. \cite[6.3]{CM}):
\vspace{7mm}

\begin{center}
\begin{tikzcd}
H^{4n+8-i}(\mathcal{M}(w)) \arrow[rr, "f_i"] \arrow[d, "\iota_{4n+8-i}"] \arrow[rddd, "\eta_i^{\dagger}"', bend right=100] &                                                                          & H^i(\mathcal{M}(w))                   \\
H^*(\mathcal{M}(w)) \arrow[d, "(\tilde{ch})^{\dagger}"] \arrow[rr, "f"]                                                                         &                                                                          & H^*(\mathcal{M}(w)) \arrow[u, "pr_i"]  \\
K^{\bullet}(\mathcal{M}(w)) \arrow[rd, "\psi"] \arrow[rr, "\tilde{f}"]                                                &                                                                          & K^{\bullet}(\mathcal{M}(w)) \arrow[u, "\tilde{ch}"] \\
                                                                                                                   & K^{\bullet}(A) \arrow[ru, "\phi"] \arrow[ruuu, "\eta_i"', bend right=100] &                                   
\end{tikzcd}
\end{center}

\subsection{The Lefschetz standard conjecture for projective deformations of moduli spaces of sheaves}\label{LSCM}

Theorem \ref{LSCforM} will be proven at the end of this subsection. The topological arguments that relate the statement of the LSC to the surjectivity of the correspondence on a quotient space of $H^i(\mathcal{M}(w))$ (see Proposition \ref{surjective correspondence}) and which are necessary for the proof of Theorem \ref{LSCforM} are identical to those found in \cite{CM}.

As in the conventions, we let $\overline{H}^i(\mathcal{M}(w))$ denote $H^{i}(\mathcal{M}(w))/[R^i+Alg^i]$, where $R^i$ consists of degree $i$ classes of $\mathcal{M}(w)$ generated by classes of lower degree and $Alg^i$ consists of algebraic classes of $\mathcal{M}(w)$ of degree $i$. 

\begin{Prop}\label{surjectivity}
The composition 
$$
K^{\bullet}(A)\xrightarrow[]{\eta_i} H^i(\mathcal{M}(w))\longrightarrow 
\overline{H}^i(\mathcal{M}(w))
$$
is surjective for all $i\geq 1$.
\end{Prop}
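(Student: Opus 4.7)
The plan is to reduce the statement to a purely ring-theoretic fact about the K\"unneth factors of $ch(\mathcal{E})$, invoking Markman's result that these factors generate $H^{*}(\mathcal{M}(w))$ as a $\mathbb{Q}$-algebra (\cite[Corollary 2]{Markman3}), which is already cited in the proof of Lemma~\ref{trivial character}.

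First, I would unpack $\eta_i$ explicitly. From the definitions (\ref{phi prime}), (\ref{normalization}), and (\ref{eta}), we have $\eta(x) = \tilde{ch}(\phi'(x)\otimes \zeta) = ch(\phi'(x))\cdot \exp\!\bigl(\tfrac{-\alpha}{2n+2}\bigr)\cdot \sqrt{td_{\mathcal{M}(w)}}$. Since $A$ is Abelian, $td_A = 1$, so Grothendieck--Riemann--Roch applied to the smooth projection $\pi_2$ gives
\begin{equation*}
ch(\phi'(x)) \;=\; \pi_{2*}\bigl(\pi_1^{*}ch(x)\cdot ch(\mathcal{E})\bigr).
\end{equation*}
The classes $\exp(-\alpha/(2n+2))$ and $\sqrt{td_{\mathcal{M}(w)}}$ are both of the form $1 + (\text{positive-degree terms})$. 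Thus when we take the degree-$i$ component of their product with $ch(\phi'(x))$, any contribution coming from a higher-degree factor of $\exp$ or $\sqrt{td}$ is a product of two classes of strictly lower degree, hence lies in $R^{i}$. Therefore modulo $R^{i}$ we obtain the congruence
\begin{equation*}
\eta_i(x) \;\equiv\; \bigl[\,\pi_{2*}(\pi_1^{*}ch(x)\cdot ch(\mathcal{E}))\,\bigr]_i \pmod{R^{i}}.
\end{equation*}

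Next I would expand $ch(\mathcal{E})$ by K\"unneth: fix a basis so that $ch(\mathcal{E}) = \sum_a \pi_1^{*}u_a\cdot \pi_2^{*}v_a$ with $u_a\in H^{*}(A)$, $v_a\in H^{*}(\mathcal{M}(w))$. By the projection formula,
\begin{equation*}
\bigl[\pi_{2*}(\pi_1^{*}ch(x)\cdot ch(\mathcal{E}))\bigr]_i \;=\; \sum_{a\,:\,\deg v_a = i}\!\Bigl(\int_A ch(x)\cup u_a\Bigr)\,v_a.
\end{equation*}
As $x$ ranges over $K^{\bullet}(A)$, the Chern character $ch(x)$ ranges over all of $H^{*}(A)$ by Lemma~\ref{AH isomorphism}, and the Poincar\'e duality pairing on $H^{*}(A)$ is non-degenerate, so the coefficients $(\int_A ch(x)\cup u_a)$ can be chosen independently for each $a$ with $\deg v_a = i$. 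Consequently the image of $\eta_i$ modulo $R^{i}$ contains every degree-$i$ K\"unneth factor $v_a$ of $ch(\mathcal{E})$.

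Finally, by \cite[Corollary 2]{Markman3}, the K\"unneth factors of $ch(\mathcal{E})$ generate $H^{*}(\mathcal{M}(w))$ as a $\mathbb{Q}$-algebra. Hence any class in $H^{i}(\mathcal{M}(w))$ is a polynomial in these generators; the polynomial pieces that are genuine products of two or more factors of strictly positive degree lie in $R^{i}$, and the remaining linear part is a combination of degree-$i$ K\"unneth factors of $ch(\mathcal{E})$. The previous step shows these are in the image of $\eta_i$, so $\eta_i$ surjects onto $H^{i}(\mathcal{M}(w))/R^{i}$ and a fortiori onto $\overline{H}^{i}(\mathcal{M}(w)) = H^{i}(\mathcal{M}(w))/[R^{i}+Alg^{i}]$. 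The only mildly delicate step is book-keeping the $\exp$ and $\sqrt{td}$ corrections, but as noted these are visibly $R^{i}$-contributions, so the argument is essentially formal once Markman's generation result is in hand.
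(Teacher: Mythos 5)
Your proof is correct and follows essentially the same route as the paper's: both reduce the statement to \cite[Corollary 2]{Markman3} by decomposing the universal class by the K\"unneth theorem, using a duality argument (your Poincar\'e-duality pairing on $H^*(A)$ playing the role of the paper's Mukai-pairing-orthogonal basis of $K^{\bullet}(A)$) to show each degree-$i$ K\"unneth factor lies in the image of $\eta_i$, and then discarding decomposable products into $R^i$. If anything you are more careful than the paper, which works K-theoretically and leaves the $\mathrm{exp}(-\alpha/(2n+2))$ and $\sqrt{td}$ corrections implicit; your only imprecision is that for $i=2$ the $\alpha$-correction term lands in $Alg^2$ rather than $R^2$ (since $\alpha = c_1(e_w)$ is algebraic), which the quotient $\overline{H}^i(\mathcal{M}(w))$ absorbs anyway.
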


\begin{proof}
The map $\eta_i: K^{\bullet}(A)\rightarrow H^i(\mathcal{M}(w))$ is the composition of the maps $$K^{\bullet}(A)\xrightarrow[]{\phi} K^{\bullet}(\mathcal{M}(w))\xrightarrow[]{ch}H^*(\mathcal{M}(w))\xrightarrow{p_i} H^i(\mathcal{M}(w)).$$ Recall that $\phi(\alpha) = \pi_{1_!}(\pi_{2}^{!}(\alpha)\otimes [\mathcal{E}])$, for $\mathcal{E}$ a universal sheaf over $A\times \mathcal{M}(w)$. Let $\{\alpha_1, ..., \alpha_8\}$ be an orthogonal basis of $K^0(A)$ and $\{\alpha_9, ..., \alpha_{16}\}$ an orthogonal basis for $K^1(A)$ with respect to the Mukai pairing on $K^{\bullet}(A)$ (see (\ref{Generalized Mukai pairing})). We write $[\mathcal{E}] = \sum_{j = 1}^8\alpha_j\otimes \beta_j + \sum_{k = 9}^{16} \alpha_k\otimes \beta_k$, where $\beta_i = \phi(\alpha_i)$. Now $[\mathcal{E}]\in K^{\bullet}(A\times \mathcal{M}(w))$, so by the K\"{u}nneth theorem for $K^{\bullet}(A\times \mathcal{M}(w))$, $\beta_j\in K^0(\mathcal{M}(w))$ and $\beta_k\in K^1(\mathcal{M}(w))$. The K\"{u}nneth factors of the subspaces $ch_i(\phi(K^{\bullet}(A)))$ generate the cohomology ring $H^*(\mathcal{M}(w))$ by \cite[ Corollary 2]{Markman3}. As a result, $c_i(\beta_j)$ generate $H^{ev}(\mathcal{M}(w))$ and $c_{i+{\frac{1}{2}}}(\beta_k)$ generate $H^{odd}(\mathcal{M}(w))$. The result follows.
\end{proof}

\begin{Prop}\label{mapscoincide}
 Let $\eta_i$ be the map defined in (\ref{eta}). The images of the maps $f_i$ and $\eta_i$ coincide, for $f_i$ given by (\ref{f sub i}) and $\eta_i$ in the previous proposition.  
\end{Prop}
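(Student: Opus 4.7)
The strategy is to exploit the commutative diagram at the end of subsection \ref{self adjoint correspondence}, which identifies $f_i$ with the composition $\eta_i \circ \eta_i^{\dagger}$, where $\eta_i^{\dagger} := \psi \circ (\tilde{ch})^{\dagger} \circ \iota_{4n+8-i}$ serves as the adjoint of $\eta_i$ with respect to the Mukai pairing on $K^{\bullet}(A)$ and the Poincar\'e duality pairing on $H^{*}(\mathcal{M}(w))$. Unwinding the diagram immediately yields $\mathrm{Im}(f_i) \subseteq \mathrm{Im}(\eta_i)$.

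For the reverse inclusion I plan to reduce the problem to a non-degeneracy statement. Since the Mukai pairing on $K^{\bullet}(A)$ is non-degenerate, one has $\mathrm{Im}(\eta_i^{\dagger}) = \ker(\eta_i)^{\perp}$; therefore the desired equality $\mathrm{Im}(f_i) = \eta_i(\mathrm{Im}(\eta_i^{\dagger})) = \mathrm{Im}(\eta_i)$ is equivalent to the direct sum decomposition $K^{\bullet}(A) = \ker(\eta_i) \oplus \ker(\eta_i)^{\perp}$, which in turn holds exactly when the restriction of the Mukai pairing to $\ker(\eta_i)$ is non-degenerate. The task therefore becomes verifying this non-degeneracy.

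The key step will be to show that $\ker(\eta_i)$ is a $\mathrm{Spin}(V)_w$-subrepresentation of $K^{\bullet}(A)$. The normalization by $\tau_{\zeta}$ introduced in (\ref{normalization}) is designed precisely so that the line bundle twist $L_g$ from the transformation rule $(g \otimes \mathrm{mon}_g)(ch(\mathcal{E})) = ch(\mathcal{E}) \cdot \pi_{\mathcal{M}}^{*} ch(L_g)$ of \cite[Theorem 1.2(2)]{Markman1} is absorbed into $\eta_i$; the computation parallels the one in the proof of Proposition \ref{monodromy invariance of the theta class}, and even when $\eta_i$ is only equivariant up to such a line bundle twist, invertibility of $\mathrm{mon}_g$ forces its kernel to remain $\mathrm{Spin}(V)_w$-stable. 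Lemma \ref{Decomposition of G-representations} then provides the orthogonal decomposition $K^{\bullet}(A) = \mathbb{Q}w \oplus w_{\mathbb{Q}}^{\perp} \oplus K^{1}(A)$ into irreducible $\mathrm{Spin}(V)_w$-representations, forcing $\ker(\eta_i)$ to be the direct sum of some subset of these three summands. Each summand carries a non-degenerate restriction of the Mukai pairing --- on $\mathbb{Q}w$ because $(w,w) = 2(n+1) \neq 0$, on $w_{\mathbb{Q}}^{\perp}$ as the orthogonal complement of a one-dimensional non-degenerate subspace inside the non-degenerate space $K^{0}(A)$, and on $K^{1}(A)$ because the generalized Mukai pairing (\ref{Generalized Mukai pairing}) restricts non-degenerately to the odd part. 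Consequently the restriction of the Mukai pairing to $\ker(\eta_i)$ is non-degenerate, completing the argument.

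The principal obstacle is the equivariance/stability of $\ker(\eta_i)$ under $\mathrm{Spin}(V)_w$: one must track carefully how the Mukai line bundle twist $L_g$ propagates through the composition $\eta_i = pr_i \circ \tilde{ch} \circ \tau_{\zeta} \circ \phi^{\prime}$ and verify that it affects only the image and not the kernel. Once this bookkeeping is carried out --- following the template laid out in the proof of Proposition \ref{monodromy invariance of the theta class} --- the remainder of the argument is standard semisimple linear algebra for orthogonal representations.
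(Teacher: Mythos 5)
Your argument is correct and is essentially the paper's own proof: the paper likewise identifies $f_i$ with $\eta_i\circ\eta_i^{\dagger}$, invokes the $\mathrm{Spin}(V)_w$-equivariance secured by the normalization (via \cite[Corollary 8.7]{Markman1}) together with the decomposition of Lemma \ref{Decomposition of G-representations} into the pairwise non-isomorphic irreducibles $\mathbb{Q}w\oplus w^{\perp}_{\mathbb{Q}}\oplus K^1(A)$, and concludes summand-by-summand that each $\eta_{i,\bullet}$ is injective or zero and each adjoint $(\eta_{i,\bullet})^{\dagger}$ is surjective onto its summand or zero. Your reformulation through the non-degeneracy of the Mukai pairing on $\ker(\eta_i)$ is the same argument in different linear-algebra packaging (and it makes explicit the multiplicity-freeness and orthogonality of the three summands that the paper's closing step uses tacitly).
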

\begin{proof}
The proof of this proposition is very similar to the analogous proof in the $K3^{[n]}$ case: \cite[Claim 7.2]{CM}. The map $\eta_{i, w}$ given in (\ref{projection of eta}) is injective if it does not vanish. We claim that the same is true for $\eta_{i, w^{\perp}}$ and $\eta_{i, 1}$. This follows from the $\mathrm{Spin}(V)_w$-equivariance of $\eta_{i, w^{\perp}}$ and $\eta_{i, 1}$ established as a result of \cite[Corollary 8.7]{Markman1}. We have seen already that $S^-_{\mathbb{Q}}$ is an irreducible $\mathrm{Spin}(V)_w$-representation (see subsection \ref{monodromy}). Furthermore, by Lemma \ref{Decomposition of G-representations} there is a direct sum decomposition $K^{\bullet}(A)\cong \mathbb{Q}w\oplus w^{\perp}_{\mathbb{Q}}\oplus K^1(A)$, where each summand is an irreducible $\mathrm{Spin}(V)_w$-representation.
Therefore, the images of $(\eta_{i, 1})^{\dagger}$, $(\eta_{i, w})^{\dagger}$, and $(\eta_{i, w^{\perp}})^{\dagger}$ are equal to $K^1(A)$, $\mathbb{Q}w$, and $w^{\perp}_{\mathbb{Q}}$ respectively if they do not vanish. Then the compositions $\eta_{i, 1}\circ (\eta_{i, 1})^{\dagger}$, $\eta_{i, w}\circ (\eta_{i, w})^{\dagger}$, and $\eta_{i, w^{\perp}}\circ (\eta_{i, w^{\perp}})^{\dagger}$ have the same images as $\eta_{i, 1}$, $\eta_{i, w}$, and $\eta_{i, w^{\perp}}$ respectively. For each $i$, we may then express
$$
\eta_i = \eta_{i, w}\circ h_w + \eta_{i, w^{\perp}}\circ h_{w^{\perp}} + \eta_{i, 1}\circ h_{1}.
$$
Correspondingly, the adjoint is given by $\eta_i^{\dagger} = (\eta_{i, w})^{\dagger} + (\eta_{i, w^{\perp}})^{\dagger} + (\eta_{i, 1})^{\dagger}$. By the construction of $f_i$, we have 
$$
f_i = \eta_{i, 1}\circ (\eta_{i, 1})^{\dagger} + \eta_{i, w}\circ (\eta_{i, w})^{\dagger}+ \eta_{i, w^{\perp}}\circ (\eta_{i, w^{\perp}})^{\dagger},
$$
and so the result follows.
\end{proof}

\begin{Prop}\label{surjective correspondence}
For $f_i$ the map given by (\ref{f sub i}), the composition $$H^{4n+8-i}(\mathcal{M}(w))\xrightarrow[]{f_i} H^{i}(\mathcal{M}(w))\rightarrow \overline{H}^i(\mathcal{M}(w))$$ is surjective for $i\geq 1$.
\end{Prop}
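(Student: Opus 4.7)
The plan is essentially a one-line combination of the two preceding propositions. By Proposition \ref{mapscoincide}, the images of the maps $f_i : H^{4n+8-i}(\mathcal{M}(w)) \to H^i(\mathcal{M}(w))$ and $\eta_i : K^{\bullet}(A) \to H^i(\mathcal{M}(w))$ coincide as subspaces of $H^i(\mathcal{M}(w))$. By Proposition \ref{surjectivity}, the composition of $\eta_i$ with the quotient projection $H^i(\mathcal{M}(w)) \twoheadrightarrow \overline{H}^i(\mathcal{M}(w))$ is surjective. Since the two maps $f_i$ and $\eta_i$ have the same image in $H^i(\mathcal{M}(w))$, composing either with the projection to $\overline{H}^i(\mathcal{M}(w))$ yields the same image in the quotient, which is all of $\overline{H}^i(\mathcal{M}(w))$.

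So concretely, I would write: given a class $[\gamma] \in \overline{H}^i(\mathcal{M}(w))$, lift it to $\gamma \in H^i(\mathcal{M}(w))$ and use Proposition \ref{surjectivity} to find $x \in K^{\bullet}(A)$ with $\eta_i(x) \equiv \gamma \pmod{R^i + Alg^i}$. By Proposition \ref{mapscoincide}, $\eta_i(x)$ lies in the image of $f_i$, say $\eta_i(x) = f_i(y)$ for some $y \in H^{4n+8-i}(\mathcal{M}(w))$. Then the image of $y$ under the composition equals $[\gamma]$.

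There is no real obstacle here: the substantive work is contained in the two prior propositions, which package the cohomology-generation result of Markman \cite[Corollary 2]{Markman3} (used for surjectivity of $\eta_i$ modulo decomposables and algebraic classes) and the $\mathrm{Spin}(V)_w$-representation-theoretic identification of $\mathrm{Im}(f_i)$ with $\mathrm{Im}(\eta_i)$ via the orthogonal decomposition $K^{\bullet}(A) \cong \mathbb{Q}w \oplus w^{\perp}_{\mathbb{Q}} \oplus K^1(A)$. The present proposition is simply the composition of these two facts and should be a short paragraph.
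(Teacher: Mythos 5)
Your proposal is correct and is exactly the paper's argument: the paper proves this proposition in one line, citing Propositions \ref{surjectivity} and \ref{mapscoincide} precisely as you do, and your explicit diagram chase with the lift $\gamma$ and preimage $y$ merely spells out that same deduction.
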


\begin{proof}
This follows immediately from Propositions \ref{surjectivity} and \ref{mapscoincide}.
\end{proof}

Let $g_i: H^{4n+8-i}(\mathcal{M}(w))\rightarrow H^{i}(\mathcal{M}(w))$ denote the map induced by the degree $i$ graded summand of the cycle 
\begin{equation}\label{g}
    -\theta([E^1])\sqrt{td_{\mathcal{M}(w)\times \mathcal{M}(w)}}.
\end{equation}
Recall that $\theta$ is the characteristic class (\ref{theta class 2}).

\begin{Cor}\label{f and g}
Let $\overline{f}_i: H^{4n+8-i}(\mathcal{M}(w))\rightarrow \overline{H}^i(\mathcal{M}(w))$ be the composition of $f_i$ (\ref{f sub i}) with the surjection $H^{i}(\mathcal{M}(w))\rightarrow \overline{H}^i(\mathcal{M}(w))$. Define $\overline{g}_i$ analogously, where $g_i$ is the map defined in (\ref{g}). Then $\overline{f}_i = \overline{g}_i$ for $i$ in the range $1< i< 2n+4$. In particular, the correspondence $\overline{g}_i$ is surjective for $1< i< 2n+4$.
\end{Cor}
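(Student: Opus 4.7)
The plan is to show that the difference of the correspondences defining $f$ and $g$ is supported on the diagonal of $\mathcal{M}(w)\times\mathcal{M}(w)$ and therefore, for degree reasons, induces the zero operator $H^{4n+8-i}(\mathcal{M}(w))\to H^i(\mathcal{M}(w))$ whenever $i<2n+4$. First I would rewrite the cycle (\ref{finalcorrespondence}) inducing $f$ as $\theta([E^{\bullet}])\sqrt{td_{\mathcal{M}(w)\times\mathcal{M}(w)}}$ (by the definition (\ref{theta class 1})). Since $[E^{\bullet}]=[E^2]-[E^1]$ in $K^{\bullet}(\mathcal{M}(w)\times\mathcal{M}(w))$ and the exponential factor in (\ref{theta class 1}) and (\ref{theta class 2}) is the same, additivity of $\theta$ gives
\begin{equation*}
\theta([E^{\bullet}])+\theta([E^1])=\theta([E^2]),
\end{equation*}
so the cycle inducing $f-g$ is $\theta([E^2])\sqrt{td_{\mathcal{M}(w)\times\mathcal{M}(w)}}$. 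The key structural input is that $E^2\cong\mathcal{O}_{\Delta}$, so this class is supported on the diagonal.

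Next I would run a short Grothendieck--Riemann--Roch computation for the diagonal embedding $d:\mathcal{M}(w)\hookrightarrow\mathcal{M}(w)\times\mathcal{M}(w)$. Its virtual relative tangent bundle is $-T_{\mathcal{M}(w)}$, so $ch(\mathcal{O}_{\Delta})=d_{\ast}\bigl(td_{\mathcal{M}(w)}^{-1}\bigr)$. Applying the projection formula along $d$, together with $d^{\ast}(p_1^{\ast}\alpha+p_2^{\ast}\beta)=\alpha+\beta$ and $d^{\ast}\sqrt{td_{\mathcal{M}(w)\times\mathcal{M}(w)}}=td_{\mathcal{M}(w)}$, the Todd factors cancel and the cycle representing $f-g$ reduces to
\begin{equation*}
d_{\ast}\!\left(\exp\!\left(-\frac{\alpha+\beta}{2n+2}\right)\right).
\end{equation*}

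As a correspondence, $d_{\ast}(\sigma)$ acts on $H^{\ast}(\mathcal{M}(w))$ by cup product with $\sigma$. For $x\in H^{4n+8-i}(\mathcal{M}(w))$, the $H^i$-component of $x\cup\exp(-(\alpha+\beta)/(2n+2))$ is precisely $x$ cupped with the degree-$2(i-2n-4)$ part of the exponential, which sits in negative degree and hence vanishes whenever $i<2n+4$. Thus $f_i=g_i$ exactly in this range, and a fortiori $\overline{f}_i=\overline{g}_i$ in $\overline{H}^i(\mathcal{M}(w))$. Surjectivity of $\overline{g}_i$ for $1<i<2n+4$ then follows at once from Proposition \ref{surjective correspondence}. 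The one step requiring care is the clean identification of $f-g$ with a diagonally supported class via Grothendieck--Riemann--Roch; once that is in hand, the vanishing in the stated range is forced by the elementary fact that cup product with a class of non-negative cohomological degree cannot lower cohomological degree.
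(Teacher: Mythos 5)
Your proposal is correct and follows essentially the same route as the paper: both identify the difference $f-g$ with the diagonally supported class $ch(\mathcal{O}_{\Delta})\exp\left(-\tfrac{p_1^*\alpha+p_2^*\beta}{2n+2}\right)\sqrt{td_{\mathcal{M}(w)\times\mathcal{M}(w)}}$ via $E^2\cong\mathcal{O}_{\Delta}$, conclude that its degree-$i$ component vanishes for $i<2n+4$ since the diagonal has codimension $2n+4$, and deduce surjectivity from Proposition \ref{surjective correspondence}. Your extra Grothendieck--Riemann--Roch and projection-formula reduction to $d_*\bigl(\exp\bigl(-\tfrac{\alpha+\beta}{2n+2}\bigr)\bigr)$ is sound (the paper performs the same computation in the proof of Proposition \ref{monodromy invariance of the theta class}), but the paper's proof of the corollary shortcuts it by simply noting $ch_i(\mathcal{O}_{\Delta})=0$ for $i<2n+4$.
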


\begin{proof}
Let $p_i$ denote the projection of $\mathcal{M}(w)\times \mathcal{M}(w)$ onto its factors. Since
$$
ch([E^{\bullet}]) = ch([E^1])-ch([\mathcal{O}_{\Delta}]),
$$
we have
$$
f_i-g_i = ch_i([\mathcal{O}_{\Delta}])\mathrm{exp}\left(\frac{-p_1^*\alpha - p_2^*\beta}{2n+2}\right)\left(\sqrt{td_{\mathcal{M}(w)\times \mathcal{M}(w)}}\right).
$$
The above fact follows since the sheaf cohomology of the complex $E^{\bullet}$ in degree $2$ is given by $\mathcal{E}xt^2_{\pi_{13}}(\pi_{12}^*\mathcal{E}, \pi_{23}^*\mathcal{E})\cong \mathcal{O}_{\Delta}$. 
For $i<2n+4$, we have $ch_i(\mathcal{O}_{\Delta}) = 0$, therefore $f_i = g_i$ in degree $i<2n+4$, and so $\overline{f}_i = \overline{g}_i$ in this range. The surjectivity of $\overline{g}_i$ for $1<i<2n+4$ follows from this fact combined with Proposition \ref{surjective correspondence}.
\end{proof}

In order to prove Theorem \ref{LSCforM} by means of applying Proposition \ref{surjective correspondence} and Corollary \ref{f and g}, we need a series of topological results that will be necessary for the inductive argument found in the proof. These results can be found in \cite{CM}. We repeat the proofs here for clarity.

\begin{Lem}(\cite[Lemma 2.2(1)]{CM})
Let $X$ be a smooth projective variety of complex dimension $d$ and $i\leq d$ an integer. Assume $i = 2j$ and let $\alpha\in H^i(X)$ be the class of a codimension-$j$ algebraic cycle in $X$. Then there exists an algebraic cycle $Z$ of codimension $i$ in $X\times X$ such that the image of the correspondence 
$$
[Z]^*: H^{2d-i}(X)\rightarrow H^i(X)
$$
contains $\alpha$.
\end{Lem}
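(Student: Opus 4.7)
The plan is to build $Z$ as a suitable external product and deduce the result from Poincar\'e duality. Concretely, fix an ample class $h := c_1(\mathcal{O}_X(1))\in H^2(X,\mathbb{Q})$, so that $h^j\in H^{2j}(X,\mathbb{Q})$ is represented by an algebraic cycle $V$ of codimension $j$ (for instance a complete intersection of $j$ hyperplane sections). If $W$ is a codimension-$j$ algebraic cycle in $X$ with $[W]=\alpha$, I take
\[
Z := W\times V \;\subset\; X\times X,
\]
which is an algebraic cycle of codimension $j+j = 2j = i$ with cohomology class $[Z]=p_1^*\alpha\cup p_2^*h^j\in H^{2i}(X\times X,\mathbb{Q})$.

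Next, I would compute the correspondence $[Z]^*\colon H^{2d-i}(X)\to H^i(X)$ using the projection formula: for any $\beta\in H^{2d-2j}(X)$,
\[
[Z]^*(\beta) \;=\; p_{1,*}\bigl(p_1^*\alpha \cup p_2^*h^j\cup p_2^*\beta\bigr) \;=\; \alpha\cdot\Bigl(\textstyle\int_X h^j\cup\beta\Bigr).
\]
Thus the image of $[Z]^*$ contains $\alpha$ as soon as one can exhibit some $\beta\in H^{2d-2j}(X,\mathbb{Q})$ with $\int_X h^j\cup\beta\neq 0$; after rescaling $\beta$ by the reciprocal of this number, one gets $[Z]^*(\beta)=\alpha$.

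Finally, the existence of such a $\beta$ follows from standard facts. Because $h$ is ample, the hard Lefschetz isomorphism $L^d\colon H^0(X,\mathbb{Q})\xrightarrow{\sim} H^{2d}(X,\mathbb{Q})$ applied to the unit class shows $h^d\neq 0$ in $H^{2d}(X,\mathbb{Q})$, hence $h^j\neq 0$ in $H^{2j}(X,\mathbb{Q})$. The Poincar\'e pairing $H^{2j}(X,\mathbb{Q})\otimes H^{2d-2j}(X,\mathbb{Q})\to\mathbb{Q}$ is non-degenerate, so some $\beta\in H^{2d-2j}(X,\mathbb{Q})$ pairs non-trivially with $h^j$, completing the argument. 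There is no real obstacle here: the construction is essentially forced once one sees that the correspondence factors as multiplication by a scalar on the fixed class $\alpha$, and the only input needed beyond the Poincar\'e pairing is the non-vanishing of a single power of an ample class.
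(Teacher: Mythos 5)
Your proof is correct and takes essentially the same approach as the paper's: there the cycle is $Z = T\times T$ with $[T]=\alpha$, so $[Z]^*(\beta) = \left(\int_X \alpha\cup\beta\right)\alpha$ and Poincar\'e duality supplies a $\beta$ pairing nontrivially with $\alpha$ (the case $\alpha = 0$ being trivial). Your only deviation is replacing the second factor $T$ by a complete intersection representing $h^j$, a harmless variant whose sole advantage is that the scalar $\int_X h^j\cup\beta$ is nonzero for some $\beta$ regardless of whether $\alpha$ vanishes.
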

\begin{proof}
Let $\alpha\in H^i(X)$ be the class of a codimension $j$ algebraic cycle $T$. Then the codimension $i$ cycle $T\times T$ is algebraic in $X\times X$. Let $[Z]$ be the class of $T\times T$ in $H^{2i}(X\times X)$. The image of $[Z]$ in the projection $H^{2i}(X\times X)\rightarrow H^i(X)\otimes H^i(X)$ is $\alpha\otimes \alpha$. Therefore, the image of the correspondence
$$
[Z]^* : H^{2d-i}(X)\rightarrow H^i(X)
$$
is the line generated by $\alpha$.
\end{proof}

\begin{Lem}(\cite[Lemma 2.2(2)]{CM})\label{Lemma2}
Assume that $X$ satisfies the Lefschetz standard conjecture in degrees up to $i-1$. Then $X\times X$ satisfies the Lefschetz standard conjecture in degrees up to $i-1$.
\end{Lem}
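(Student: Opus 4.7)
The plan is, for each $k \leq i-1$, to construct an algebraic self-correspondence on $(X\times X)^2$ whose action on $H^{4d-k}(X\times X)$ inverts the Hard Lefschetz isomorphism $L_{X\times X}^{2d-k}$, where $d = \dim X$. Since the LSC is polarization-independent (\cite[Proposition 4.3(1)]{K1}), I would work with the product polarization $L_{X\times X} = p_1^*L_X + p_2^*L_X$, so that $L_{X\times X}^{2d-k} = (L_1+L_2)^{2d-k}$ with $L_i := p_i^* L_X$.

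First, under the K\"unneth decomposition $H^k(X\times X) = \bigoplus_{a+b=k} H^a(X) \otimes H^b(X)$, every summand has $a, b \leq k \leq i-1$, so the hypothesis on $X$ furnishes algebraic cycles $Z_a \subset X\times X$ with $[Z_a]^* = (L_X^{d-a})^{-1}$ for each such $a$. The subtlety is that $(L_1+L_2)^{2d-k}$ does \emph{not} preserve the K\"unneth bigrading. To address this, I would refine to the Lefschetz primitive decomposition $H^*(X) = \bigoplus_j L_X^j P^{\bullet-2j}(X)$ on each factor and assemble $H^k(X\times X)$ as an $\mathfrak{sl}_2 \oplus \mathfrak{sl}_2$-representation. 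On each isotypic component, the inverse of $(L_1+L_2)^{2d-k}$ is a prescribed polynomial $Q(L_1,L_2,\Lambda_1,\Lambda_2)$ with rational coefficients, coming directly from the representation theory of $\mathfrak{sl}_2 \oplus \mathfrak{sl}_2$.

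The remaining step is to realize the factors $\Lambda_1, \Lambda_2$ appearing in $Q$ by algebraic correspondences. The restriction of $\Lambda_X$ to $H^m(X)$ for $m \leq i-1$, and likewise the primitive projectors $H^m(X) \to P^m(X)$ for $m \leq i-1$, admit expressions as rational polynomials in $L_X$ and in the $[Z_a]^*$ for $a \leq m \leq i-1$, using the Hodge--Lefschetz commutator identity $[L,\Lambda]=H$ applied inductively in the degree $m$. Pulling these expressions back through $p_1, p_2$ and summing the resulting cycles over the finitely many K\"unneth/primitive isotypic pieces produces the required algebraic self-correspondence on $(X\times X)^2$.

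The main obstacle is the bookkeeping: one must verify that every $\Lambda$-operator and primitive projector invoked in the construction acts on $H^m(X)$ only for $m \leq i-1$, so that the hypothesis on $X$ supplies the needed algebraicity. This is guaranteed by the constraint $a+b = k \leq i-1$, which forces $a, b \leq i-1$, so that every primitive piece $P^s(X)\otimes P^t(X)$ appearing in $H^k(X\times X)$ has $s, t \leq i-1$, and the polynomial $Q$ invokes $\Lambda_X$ only in those degrees. The combinatorial assembly over the finitely many isotypic components is then routine.
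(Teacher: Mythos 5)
Your overall strategy --- exactly inverting $(L_1+L_2)^{2d-k}$ via $\mathfrak{sl}_2\oplus\mathfrak{sl}_2$ representation theory --- founders on the final verification, and the gap is genuine. The polynomial $Q(L_1,L_2,\Lambda_1,\Lambda_2)$ is applied to classes in $H^{4d-k}(X\times X)$, whose K\"unneth factors $H^a(X)\otimes H^b(X)$ satisfy $a+b=4d-k$, so $a$ and $b$ range up to $2d$; your constraint ``$a+b=k\leq i-1$ forces $a,b\leq i-1$'' describes the \emph{target} of the correspondence, not its source. Any word in $L_1,L_2,\Lambda_1,\Lambda_2$ carrying $H^{4d-k}(X\times X)$ down to $H^k(X\times X)$ changes each factor degree by $\pm 2$ one letter at a time, so the letters $\Lambda_1,\Lambda_2$ unavoidably act on $H^m(X)$ for $m$ in the middle range $i\leq m\leq 2d-i+2$ once $d$ is large compared to $i$. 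Your inductive argument via $[L,\Lambda]=H$ only yields algebraicity of $\Lambda_X$ in degrees $\leq i-1$ (and, by transposing correspondences, in degrees $\geq 2d-i+3$); algebraicity of $\Lambda_X$ on middle-degree cohomology is essentially the content of $B(X)$ itself and is not supplied by the hypothesis. It is true, as you note, that the bi-primitive pieces $P^s(X)\otimes P^t(X)$ supporting $H^{4d-k}(X\times X)$ satisfy $s+t\leq k\leq i-1$, but the isotypic projectors and the extraction maps from $H^{4d-k}(X\times X)$ still require operators acting in high and middle factor degrees, so this observation does not close the gap.

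The paper sidesteps the issue entirely by never inverting the Lefschetz operator: in the Kleiman--Charles framework in force here (cf.\ Proposition \ref{Charles prop 8} and Corollary \ref{CharlesCorollary}), the LSC in a given degree only requires \emph{some} algebraic correspondence inducing an isomorphism from $H^{2\dim -k}$ onto $H^k$. The paper takes the cycles $Z_j$ ($0\leq j\leq i-1$) provided by the hypothesis, invokes \cite[Lemma 2.4]{K1} to make the K\"unneth projectors $\pi^0,\dots,\pi^{i-1},\pi^{2d-i+1},\dots,\pi^{2d}$ algebraic so that each $[Z_j]^*$ may be cut off outside $H^{2d-j}(X)$, and then checks directly that $Z=\sum_{j=0}^{i-1}Z_j\times Z_{i-1-j}$ induces an isomorphism $H^{4d-i+1}(X\times X)\to H^{i-1}(X\times X)$: each summand annihilates all cross-terms and maps $H^{2d-j}(X)\otimes H^{2d-i+1+j}(X)$ isomorphically onto $H^j(X)\otimes H^{i-1-j}(X)$. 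If you wanted to salvage your route, you would need an argument in the spirit of Kleiman's proof that $B(X)$ makes $\Lambda$ algebraic --- working string by string with the hypothesis correspondences inverting $L^{d-s}$ for $s\leq i-1$ together with powers of $L$ --- which is considerably more delicate than the bookkeeping you describe and is precisely the labor the product trick avoids.
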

\begin{proof}
Assume that $X$ satisfies the Lefschetz standard conjecture in degrees up to $i-1$. We need to show that $X\times X$ satisfies the Lefschetz standard conjecture in degree $i-1$, by induction.

Let $j\in \{0, ..., i-1\}$, then there exists a codimension $j$ algebraic cycle $Z_j$ in $X\times X$ such that the correspondence
$$
[Z_j]^*: H^{2d-j}(X)\longrightarrow H^j(X)
$$
is an isomorphism. Let $k\in \{0, ..., 2d\}$. Let $\pi^k: H^*(X)\rightarrow H^k(X)$ be the projection operator. Then $\pi^k\in H^{2d-k}(X)\otimes H^k(X)\subset H^{2d}(X\times X)$ is the $k$th K\"{u}nneth component of the diagonal. By \cite[ Lemma 2.4]{K1}, the components $\pi^0, ..., \pi^{i-1}, \pi^{2d-i+1}, ... \pi^{2d}$ are algebraic, i.e. the projections
$$
\pi^j: H^*(X)\rightarrow H^j(X)\hookrightarrow H^*(X)
$$
and
$$
\pi^{2d-j}: H^*(X)\rightarrow H^{2d-j}\hookrightarrow H^*(X)
$$
are induced by algebraic correspondences. After possibly replacing the correspondence $[Z_j]^*$ with $[Z_j]^*\circ \pi_{2d-j}$, we can assume the correspondence
$$
[Z_j]^*: H^{2d-k}(X)\rightarrow H^{2j-k}(X)
$$
is zero unless $k = j$.
Consider the codimension $i-1$ cycle $Z$ in $(X\times X)\times (X\times X)$ defined by 
$$
Z: = \sum_{j = 0}^{i-1}Z_j\times Z_{i-1-j}.
$$
We claim that the correspondence 
$$
[Z]^*: H^{4d-i+1}(X\times X)\longrightarrow H^{i-1}(X\times X)
$$
is an isomorphism. For $j\in \{0,..., i-1\}$, the hypothesis on the cycles $Z_j$ imply that the correspondence
$$
[Z_j\times Z_{i-1-j}]^*: H^{4d-i+1}(X\times X)\rightarrow H^{i-1}(X\times X)
$$
maps $H^{2d-k}(X)\otimes H^{2d-i+1+k}(X)\subset H^{4d-i+1}(X\times X)$ to zero unless $k = j$. Therefore, $[Z_j\times Z_{i-1-j}]^*$ maps $H^{2d-j}(X)\otimes H^{2d-i+1+j}(X)$ isomorphically onto $H^j(X)\otimes H^{i-1-j}(X)$. The result follows.
\end{proof}

\begin{Lem}(\cite[Lemma 2.2(2) cont.]{CM})\label{Lemma3}
Suppose $X$ satisfies the Lefschetz standard conjecture in degree up to $i-1$. Let $j$ and $k$ be positive integers with $i = j+k$. Then there exists a cycle $Z$ of codimension $i$ in $(X\times X)\times X$ such that the image of the correspondence
$$
[Z]^*: H^{4d-i}(X\times X)\longrightarrow H^i(X)
$$
contains the image of $H^j(X)\otimes H^k(X)$ in $H^i(X)$ by cup-product.
\end{Lem}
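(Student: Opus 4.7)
The plan is to exhibit $Z$ as the composition of two algebraic correspondences: the external product of the Lefschetz inverses in degrees $j$ and $k$ supplied by the induction hypothesis, followed by the diagonal correspondence realising cup product. Since $j$ and $k$ are positive with $j+k=i$, both are at most $i-1$, so by assumption there exist algebraic cycles $Z_j$ and $Z_k$ of codimensions $j$ and $k$ in $X\times X$ whose correspondences $[Z_j]^*\colon H^{2d-j}(X)\to H^j(X)$ and $[Z_k]^*\colon H^{2d-k}(X)\to H^k(X)$ are isomorphisms. Just as in the proof of Lemma~\ref{Lemma2}, one invokes Kleiman's result that K\"unneth projectors $\pi^m$ are algebraic in the degrees $m$ where LSC is known, so after postcomposing $Z_j$ and $Z_k$ with the appropriate algebraic projectors I may assume $[Z_j]^*$ vanishes on $H^m(X)$ for $m\neq 2d-j$ and analogously for $[Z_k]^*$.

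Next I form the external product cycle $Z_j\times Z_k\subset(X\times X)\times(X\times X)$, which is algebraic of codimension $i$. By the K\"unneth formula and the normalisation above, the induced correspondence
\[
[Z_j\times Z_k]^*\colon H^{4d-i}(X\times X)\longrightarrow H^i(X\times X)
\]
kills every K\"unneth component except $H^{2d-j}(X)\otimes H^{2d-k}(X)$, which it sends isomorphically onto $H^j(X)\otimes H^k(X)\subset H^i(X\times X)$.

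To convert a class in $H^j(X)\otimes H^k(X)$ into its cup product in $H^i(X)$, I use the diagonal cycle $\Gamma:=\{(x,x,x):x\in X\}\subset(X\times X)\times X$, which is algebraic of codimension $2d$ and whose associated correspondence $[\Gamma]^*\colon H^*(X\times X)\to H^*(X)$ is precisely $\Delta^*$, i.e. cup product under the K\"unneth decomposition. Define
\[
Z\ :=\ \Gamma\circ(Z_j\times Z_k)\ \subset\ (X\times X)\times X.
\]
By the standard dimension count for composition of correspondences, $Z$ has codimension $i+2d-2d=i$ and is algebraic, and $[Z]^*=\Delta^*\circ[Z_j\times Z_k]^*$.

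Tracing a general class $\omega\in H^{4d-i}(X\times X)$ through these two maps, $[Z_j\times Z_k]^*$ sends $\omega$ to an element of $H^j(X)\otimes H^k(X)$, and $\Delta^*$ sends $\alpha\otimes\beta\in H^j(X)\otimes H^k(X)$ to $\alpha\cup\beta$. Since $[Z_j]^*$ and $[Z_k]^*$ are surjective onto $H^j(X)$ and $H^k(X)$ respectively, every tensor $\alpha\otimes\beta$ arises this way, so the image of $[Z]^*$ contains the image of $H^j(X)\otimes H^k(X)$ under cup product. The argument is essentially formal once Lemma~\ref{Lemma2} and Kleiman's algebraicity of the extremal K\"unneth projectors are in hand; the only point requiring care is the bookkeeping of the K\"unneth components in the normalisation of $Z_j$ and $Z_k$, so that $[Z_j\times Z_k]^*$ has image precisely in the summand $H^j(X)\otimes H^k(X)$ on which $\Delta^*$ then performs cup product.
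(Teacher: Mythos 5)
Your proof is correct and is essentially the paper's argument: your composite cycle $\Gamma\circ(Z_j\times Z_k)$ unwinds to exactly the paper's intersection $p_{13}^*T\cdot p_{23}^*T'$ in $X\times X\times X$, with the same key input (surjectivity of the Lefschetz inverses in degrees $j,k\leq i-1$). The extra normalisation of $Z_j,Z_k$ by algebraic K\"unneth projectors is harmless but unnecessary, since the lemma only asks that the image \emph{contain} the cup products, not equal them.
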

\begin{proof}
Since $j$ (resp. $k$) is less than or equal to $i-1$, $X$ satisfies the Lefschetz standard conjecture in degree $j$, (resp. $k$). Hence, there exists a cycle $T$ (resp. $T^{\prime}$) in $X\times X$ such that 
$$
[T]^*: H^{2d-j}(X)\longrightarrow H^j(X)
$$
(resp. $[T^{\prime}]^*: H^{2d-k}(X)\rightarrow H^k(X)$) is an isomorphism. Let $p_{ij}$ denote the projection of $X\times X\times X$ onto the $i$th and $j$th components. Let $Z\in CH^i(X\times X\times X)$ be the intersection of $p^*_{13}T$ and $p^*_{23}T^{\prime}$. It follows that the image of the correspondence
$$
[Z]^*: H^{4d-i}(X\times X)\longrightarrow H^i(X)
$$
contains the image of $H^j(X)\otimes H^k(X)$ in $H^i(X)$ by the cup product.
\end{proof}

\begin{Prop}({\cite[Proposition 8]{C}}\label{Charles prop 8})
Let $X$ be a smooth projective variety of dimension $d$, and let $i\leq d$ be an integer. Then the Lefschetz standard conjecture is true in degree $i$ for $X$ if and only if there exists a disjoint union $S$ of smooth projective varieties of dimension $l\geq i$ satisfying the Lefschetz standard conjecture in degrees up to $i-2$ and a codimension $i$ cycle $Z$ in $X\times S$ such that the morphism 
$$
[Z]^*: H^{2l-i}(S)\longrightarrow H^i(X)
$$
induced by the correspondence $Z$ is surjective.
\end{Prop}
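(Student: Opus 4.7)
For the $(\Rightarrow)$ direction, I take $S = X \sqcup T$, where $T$ is a disjoint union of projective spaces (whose cohomology is entirely algebraic, so the LSC holds trivially in all degrees, in particular up to $i-2$). The LSC in degree $i$ for $X$ supplies an algebraic cycle $W \in \mathrm{CH}^i(X \times X)$ whose associated correspondence inverts $L^{d-i}_X$. Extending $W$ by zero on $X \times T$ produces a codimension-$i$ cycle $Z$ on $X \times S$ for which $[Z]^* \colon H^{2l-i}(S) \to H^i(X)$ is surjective (indeed an isomorphism on the $H^{2d-i}(X)$ summand).

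For the $(\Leftarrow)$ direction, the key construction is the algebraic self-correspondence on $X \times X$ given by
\[
\Gamma := [Z]^* \circ L^{l-i}_S \circ [Z^t]^* \colon H^{2d-i}(X) \longrightarrow H^i(S) \longrightarrow H^{2l-i}(S) \longrightarrow H^i(X),
\]
which is the composition of the algebraic correspondences $[Z]$, cup product with the polarization class $h_S^{l-i}$ on $S$, and $[Z^t]$; the resulting class on $X \times X$ has codimension $i$. By Poincar\'{e} duality, the surjectivity of $[Z]^*$ is equivalent to the injectivity of $[Z^t]^*$. Writing the K\"{u}nneth component of $[Z]$ in $H^i(X) \otimes H^i(S)$ as $\sum_k e_k \otimes f_k$ with $\{e_k\}$ a basis of $H^i(X)$ and $\{f_k\}$ linearly independent in $H^i(S)$, a direct computation identifies $\Gamma \circ L^{d-i}_X$, viewed as an endomorphism of $H^i(X)$ in the basis $\{e_k\}$, with the Gram matrix
\[
M_{jk} := \int_S L^{l-i}_S(f_j) \cup f_k,
\]
that is, with the Hodge--Riemann form on $H^i(S)$ restricted to $\mathrm{span}\{f_k\}$.

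Once $M$ is shown to be non-degenerate, Cayley--Hamilton produces a rational polynomial expression $(\Gamma \circ L^{d-i}_X)^{-1} = \sum_k c_k (\Gamma \circ L^{d-i}_X)^k$, and the algebraic codimension-$i$ cycle $\Gamma' := \sum_k c_k (\Gamma \circ L^{d-i}_X)^k \circ \Gamma$ on $X \times X$ acts on $H^{2d-i}(X)$ as the inverse of $L^{d-i}_X$, establishing the LSC in degree $i$ for $X$. The main obstacle is verifying the non-degeneracy of $M$: Hard Lefschetz on $S$ gives non-degeneracy of the Hodge--Riemann form on all of $H^i(S)$, but this does not automatically pass to the subspace $\mathrm{span}\{f_k\}$. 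The remedy is to invoke the Hodge--Riemann \emph{positivity} on primitive components of $H^i(S)_{\mathbb{C}}$, combined with the hypothesis that $S$ satisfies the LSC in degrees up to $i-2$. The low-degree LSC on $S$, together with Lemmas \ref{Lemma2} and \ref{Lemma3}, allows one to algebraically extract the Lefschetz decomposition of $H^*(S)$ in the relevant range, thereby controlling the restriction of the Hodge--Riemann form to the image of $[Z^t]^*$ and, if necessary, enlarging $\Gamma$ by additional algebraic correspondences on $S \times S$ before applying Cayley--Hamilton.
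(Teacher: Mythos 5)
The paper itself gives no proof of this proposition --- it is quoted from \cite[Proposition 8]{C} --- so your attempt can only be measured against Charles's original argument. Your $(\Leftarrow)$ direction does reproduce its skeleton (transpose the correspondence, form $\Gamma = [Z]^*\circ L_S^{l-i}\circ [Z^t]^*$, reduce to nondegeneracy of the Lefschetz form on $\mathrm{span}\{f_k\} = \mathrm{Im}\,[Z^t]^*$, finish by Cayley--Hamilton; your Gram-matrix identification is off by the invertible factor $N_{km}=\int_X e_k\cup L_X^{d-i}e_m$, but the equivalence ``$\Gamma$ iso $\Leftrightarrow$ $M$ nondegenerate'' survives, so that slip is harmless). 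The genuine gap is that at the crux you stop at a declaration of intent. Hodge--Riemann positivity on the primitive pieces of $H^i(S)$ says nothing by itself about the restriction of $q(a,b)=\int_S L_S^{l-i}a\cup b$ to an arbitrary subspace, which can perfectly well be degenerate. The two steps that actually close the argument are: (a) by Kleiman-type arguments, the LSC for $S$ in degrees up to $i-2$ makes the projectors $\pi_k$ onto the Lefschetz summands $L^kH^{i-2k}_{prim}(S)\subset H^i(S)$ algebraic (inverting $L$ in degree $i-2k\leq i-2$), so one may replace $L_S^{l-i}$ by the algebraic operator $\lambda:=\sum_k(-1)^k L_S^{l-i}\circ\pi_k$, whose associated form $q_\lambda(a,b)=\int_S\lambda(a)\cup b$ is a \emph{polarization} of the weight-$i$ Hodge structure $H^i(S)$; and (b) $\mathrm{Im}\,[Z^t]^*$ is a rational sub-Hodge structure of $H^i(S)$ (up to Tate twist, $[Z^t]^*$ is a morphism of Hodge structures), and a polarization restricts nondegenerately to any rational sub-Hodge structure, since the associated Hermitian form is definite. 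Your phrase ``controlling the restriction\dots and, if necessary, enlarging $\Gamma$'' is exactly where (a)--(b) must be carried out; moreover your appeal to Lemmas \ref{Lemma2} and \ref{Lemma3} is off target, as those concern correspondences on products $X\times X$, not the algebraicity of the Lefschetz decomposition on $S$, and the sub-Hodge-structure mechanism in (b) is absent from your sketch entirely.

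Your $(\Rightarrow)$ direction has a second genuine gap: with $S = X\sqcup T$, the hypothesis requires every component of $S$ --- in particular $X$ itself --- to satisfy the LSC in degrees up to $i-2$, and in this paper's convention ``LSC in degree $i$'' is a single-degree statement, so your hypothesis does not supply this. (The projective-space components $T$ are harmless but also useless: a cycle supported on $X\times X\subset X\times S$ annihilates the $H^*(T)$ summand, so they contribute nothing to surjectivity.) Note that in the only place this paper uses the proposition, the proof of Corollary \ref{CharlesCorollary}, the inductive hypothesis furnishes the LSC for $X$ in degrees up to $i-1$, hence for $X\times X$ by Lemma \ref{Lemma2}, and only the $(\Leftarrow)$ implication is invoked; but as a proof of the stated equivalence, your forward direction is incomplete as written.
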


\begin{Cor}(\cite[Corollary 2.4]{CM}\label{CharlesCorollary})
Let $X$ be a smooth projective variety of complex dimension $d$, and let $i\leq d$ be an integer. Suppose that $X$ satisfies the Lefschetz standard conjecture in degrees up to $i-1$. Let $R^i(X)\subset H^i(X)$ be the subspace of classes which belong to the subring generated by classes of degree $<i$, and let $Alg^i(X)\subset H^i(X)$ be the subspace of $H^i(X)$ generated by the cohomology classes of algebraic cycles.\footnote{Note that $Alg^i(X)$ vanishes for odd $i$.}

If there exists a cycle $Z$ of codimension $i$ in $X\times X$ such that the image of the morphism 
$$
[Z]^*: H^{2d-i}(X)\longrightarrow H^i(X)
$$
maps surjectively onto the quotient space $H^i(X)/[Alg^i(X)+R^i(X)]$, then $X$ satisfies the Lefschetz standard conjecture in degree $i$.
\end{Cor}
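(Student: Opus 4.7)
The plan is to deduce the corollary directly from Proposition \ref{Charles prop 8} by constructing an auxiliary variety $S$, written as a disjoint union of copies of $X$ and of $X \times X$, together with a codimension-$i$ cycle on $X \times S$ whose associated correspondence is surjective onto $H^i(X)$. The three subspaces we must cover in $H^i(X)$ are (a) the image of $[Z]^\ast$ from the hypothesis, (b) the algebraic classes $Alg^i(X)$, and (c) the ring-generated classes $R^i(X)$. The hypothesis that $[Z]^\ast$ surjects onto $H^i(X)/[Alg^i(X)+R^i(X)]$ guarantees that the sum of the images of the three contributions equals all of $H^i(X)$.

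First, I would handle (a) trivially by taking the given cycle $Z \subset X \times X$ viewed as a cycle on $X \times S_1$ with $S_1 = X$, contributing $\mathrm{Im}([Z]^\ast)$. For (b), since $Alg^i(X)$ is finite-dimensional and nonzero only when $i$ is even, I would pick a finite generating set of algebraic classes and apply the first lemma (\cite[Lemma 2.2(1)]{CM}) to each in turn, producing codimension-$i$ algebraic cycles in $X \times X$ whose correspondences contain each generator in their image; summing these yields a single cycle on a further copy $S_2 = X$ whose correspondence image contains $Alg^i(X)$. For (c), I would use that $R^i(X)$ is spanned by cup products $\alpha \cup \beta$ with $\deg \alpha = j$, $\deg \beta = k$, $j+k = i$, and $1 \leq j, k \leq i-1$; for each such $(j,k)$, Lemma \ref{Lemma3} (which uses exactly the standing hypothesis that LSC holds for $X$ up to degree $i-1$) produces a codimension-$i$ cycle in $(X\times X) \times X$ whose correspondence $H^{4d-i}(X\times X) \to H^i(X)$ has image containing every cup product from $H^j(X) \otimes H^k(X)$. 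Taking the finite sum over the finitely many such decompositions captures all of $R^i(X)$ on a disjoint union of copies of $X \times X$.

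To apply Proposition \ref{Charles prop 8} I must confirm the dimension and LSC hypotheses on the components of $S$. The components are copies of $X$ (of dimension $d \geq i$) and copies of $X \times X$ (of dimension $2d \geq i$). The assumption that $X$ satisfies the Lefschetz standard conjecture in degrees up to $i-1$ is already in force, and by Lemma \ref{Lemma2} the product $X \times X$ also satisfies LSC in degrees up to $i-1$, hence a fortiori up to $i-2$. Finally, I would assemble the cycles from (a), (b), (c) into a single codimension-$i$ cycle $W \subset X \times S$, noting that the image of $[W]^\ast$ contains $\mathrm{Im}([Z]^\ast) + Alg^i(X) + R^i(X) = H^i(X)$ by the hypothesis of the corollary; Proposition \ref{Charles prop 8} then yields the Lefschetz standard conjecture for $X$ in degree $i$.

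The proof is essentially a bookkeeping exercise and there is no genuine obstacle; the only subtlety worth checking is that the quantifier in Proposition \ref{Charles prop 8} allows $S$ to be a disjoint union of varieties of possibly different dimensions and that the correspondence is codimension $i$ on each component, so that dimensions of source cohomologies line up uniformly as $H^{2l-i}(S) = \bigoplus H^{2 \dim S_\alpha - i}(S_\alpha)$. Because $Alg^i(X)$ and $R^i(X)$ are finite-dimensional, only finitely many cycles are needed and the construction terminates.
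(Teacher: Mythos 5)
Your proposal is correct and is essentially the paper's own proof: both decompose $H^i(X)$ as $\mathrm{Im}([Z]^*)+Alg^i(X)+R^i(X)$, cover the three pieces using the given cycle $Z$, the cycles from \cite[Lemma 2.2(1)]{CM} applied to a basis of $Alg^i(X)$, and the cycles $Z_{j,k}$ of Lemma \ref{Lemma3}, then conclude via Lemma \ref{Lemma2} and Proposition \ref{Charles prop 8}. The only cosmetic difference is that the paper crosses $Z$ and the $Z_l$ with $X$ so that every component of $S$ is a copy of $X\times X$ of uniform dimension $2d$, whereas you keep mixed components of dimensions $d$ and $2d$ and correctly note that Proposition \ref{Charles prop 8} tolerates this, reading $H^{2l-i}(S)$ componentwise.
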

\begin{proof}
This follows from Lemmas \ref{Lemma2} and \ref{Lemma3}: Let $\alpha_1, ..., \alpha_r$ be a basis for $Alg^i(X)$, the collection of algebraic classes of $X$ in degree $i$. There exist codimension $i$ cycles $Z_1, ..., Z_r$ in $X\times X$ and cycles $(Z_{j, k})_{j, k>0, j+k = i}$ in $(X\times X)\times X$ for which the image of the correspondence
$$
[Z_{l}]^*: H^{2d-i}(X)\longrightarrow H^i(X)
$$
contains $\alpha_l$ for $1\leq l\leq r$ and such that the image of the correspondence
$$
[Z_{j, k}]^*: H^{4d-i}(X\times X)\longrightarrow H^{i}(X)
$$
contains the image of $H^j(X)\otimes H^k(X)$ in $H^i(X)$. We have that $X\times X$ satisfies the LSC in degree up to $i-1$. The disjoint union of the cycles $Z\times X$, $(Z_l\times X)_{1\leq l\leq r}$ and $(Z_{j, k})_{j, k>0, j+k = i}$ in a disjoint union of $(X\times X)\times X$ thus satisfy the hypothesis of Proposition $\ref{Charles prop 8}$. The space generated by the images of $H^i(X)$ of the correspondences $[Z_{j, k}]^*$ contains $A^i(X)$ by definition. Adding the images in $H^i(X)$ of the correspondences $[Z_l\times X]^*$ which generate a space containing $Alg^i(X)$, and the image in $H^i(X)$ of $[Z\times X]^*$, which maps surjectively onto $H^i(X)/[R^i(X)+Alg^i(X)]$, we get the entire space $H^i(X)$. This shows that $X$ satisfies the Lefschetz standard conjecture in degree $i$.
\end{proof}

We can now prove Theorem \ref{LSCforM}:

\begin{proof}[Proof of Theorem \ref{LSCforM}]
Recall the convention that $w: = w_{n+1}$. By the construction in subsection \ref{universal deformation}, there exists a universal family $\Pi: \tilde{\mathcal{M}}\rightarrow \mathfrak{M}_{w^{\perp}}^0$ given by (\ref{universal family}), one fiber of which is $\Pi^{-1}(t_1) = \mathcal{M}(w)$, and choose a point $t_2\in \mathfrak{M}^0$ parametrizing the fiber $X: = \Pi^{-1}(t_2)$. Choose a reduced connected projective curve $C\subset \mathfrak{M}_{w^{\perp}}^0$ as in Proposition \ref{twistor family}, which contains both $t_1$ and $t_2$. We then consider the natural morphism 
$$
q: \mathcal{\tilde{M}}\times_{C}\mathcal{\tilde{M}}\longrightarrow C.
$$
  Let $\Pi_C: \tilde{\mathcal{M}_C}\rightarrow \mathfrak{M}^0_{w^{\perp}}$ denote the restriction of $\Pi$ to $C$, where $\tilde{\mathcal{M}}_C$ is the restriction of $\tilde{\mathcal{M}}$ to $C$. By Proposition \ref{twistor family}, there is a flat reflexive sheaf $\mathcal{F}$ over $\tilde{\mathcal{M}}\times_C\tilde{\mathcal{M}}$ which restricts to the sheaf $E^1$ over $t_1$. This implies there is a flat section $\sigma$ of the local system $R^*q_*\mathbb{Q}$ through the class
 
 $$-\theta(E^1)\in H^*(\mathcal{M}(w)\times \mathcal{M}(w))$$ 
 which is algebraic in $H^*(X\times X)$. Recall that $\theta$ was the characteristic class defined in (\ref{theta class 2}). This in turn implies that there exists a flat section $\rho$ of the local system $R^*q_*\mathbb{Q}$ through the class $g\in H^*(\mathcal{M}(w)\times \mathcal{M}(w))$, where $g$ is the correspondence given in (\ref{g}), and which is algebraic in $H^*(X\times X)$, by Proposition \ref{algebraicity of the theta class}. 
 
 We now prove the LSC for $X$ by induction on degree $i\leq 2n+4$: For $i = 0$ the result is clear. Let $i\leq 2n+4$ be a positive integer. Assume the Lefschetz standard conjecture holds for $X$ in degrees up to $i-1$. Let $t_2\in C$ be the point such that $\Pi_C^{-1}(t_2) = X$. Then the correspondence 
$$
\rho_i(t_2): H^{4n+8-i}(X)\longrightarrow H^i(X)
$$
is the fiber of a morphism of local systems 
$$
R^{4n+8-i}\Pi_{C,*}\mathbb{Q}\longrightarrow R^{i}\Pi_{C,*}\mathbb{Q}.
$$
By Corollary \ref{f and g}, the morphism
$$
\overline{g_i}: H^{4n+8-i}(\mathcal{M}(w))\longrightarrow \overline{H}^i(\mathcal{M}(w))
$$
    is surjective for $i\geq 2$. Let $\overline{\rho_i}(t_2)$ denote the composition of the fiber
    $\rho_i(t_2)$
    and the surjection $H^i(X)\rightarrow \overline{H}^i(X)$.
    It follows that $\overline{\rho}_i(t_2)$ is also surjective for $i \geq 2$, as $\overline{\rho}_i(t_2)$ is the composition of the correspondence $\overline{g}_i$ with a parallel transport operator. As a result, $X$ satisfies the Lefschetz standard conjecture in degree $i$ by Corollary \ref{CharlesCorollary}. It follows that $X$ satisfies the Lefschetz standard conjecture by induction.
\end{proof}

\textbf{\section{The main results}\label{LCS}}

In this section we prove Theorem \ref{maintheorem}. Implicit in our notation will be the result of Lemma \ref{deformation invariant}, i.e. that the subgroup $$\Gamma_w \subset \mathrm{Aut}_0(Kum_n(A)),$$ given by (\ref{Gamma_v}) deforms as a local system over $\mathfrak{M}_{w^{\perp}}^0$. For a fiber $Y$ of the universal family $p: \mathcal{Y}\rightarrow \mathfrak{M}_{w^{\perp}}^0$, we denote the corresponding fiber of the local local system as \begin{equation}\label{deformed gamma}\Gamma\subset \mathrm{Aut}_0(Y).\end{equation}
Let $$i: Kum_n(A)\hookrightarrow\mathcal{M}(w)$$ be the inclusion. Over $\mathbb{Q}$-coefficients, the image of the pullback of the inclusion is precisely the subring of $H^*(Kum_n(A), \mathbb{Q})$ invariant under the translation action of $\Gamma$. We denote $Im(i^*) = H^*(Kum_n(A), 
\mathbb{Q})^{\Gamma}$. 

\vspace{5mm}
\subsection{The restriction of the correspondence to the generalized Kummer cohomology ring}\label{restriction of the correspondence}

As previously, let $w: = w_{n+1}$ denote the Chern character of the ideal sheaf of a length-$(n+1)$ subscheme on $A$. In subsection \ref{Moduli Space} we defined (see (\ref{KclassRelativeExtension})) a class $[E^{\bullet}] = [E^2]-[E^1]$ in $K^{\bullet}(\mathcal{M}(w)\times \mathcal{M}(w))$ from which there arises a correspondence $\tilde{f}^{\prime} = [E^{\bullet}]_*:K^{\bullet}(\mathcal{M}(w))\rightarrow K^{\bullet}(\mathcal{M}(w))$. Recall that we denote 
$$
f^{\prime} = ch([E^{\bullet}])\sqrt{td_{\mathcal{M}(w)\times \mathcal{M}(w)}}: H^*(\mathcal{M}(w), \mathbb{Q})\longrightarrow H^*(\mathcal{M}(w), \mathbb{Q})
$$
the corresponding map on cohomology. The correspondence $f$ is the normalization of $f^{\prime}$ given in (\ref{normalization}).

Let us first describe the image of the restriction of the correspondence
$$f: H^*(\mathcal{M}(w), \mathbb{Q})\longrightarrow H^*(\mathcal{M}(w), \mathbb{Q})$$ defined in (\ref{finalcorrespondence}) to a correspondence 
\begin{equation}\label{restriction of f}
f_K: H^*(Kum_n(A), \mathbb{Q})\longrightarrow H^*(Kum_n(A), \mathbb{Q}).
\end{equation}
The map $f$ is associated to a convolution of functors of derived categories $\Phi: D^b(A)\rightarrow D^b(\mathcal{M}(w))$ and $\Psi: D^b(\mathcal{M}(w))\rightarrow D^b(A)$. We consider the composition 
$$
\Phi_K\Psi_K: D^b(Kum_n(A))\longrightarrow D^b(Kum_n(A))
$$
where $\Phi_K: = i^*\circ \Phi$ and $\Psi_K: = \Psi\circ i_*$. The map $\Phi_K$ is a Fourier-Mukai transform with Fourier-Mukai kernel $\mathcal{U}: = (id_A\times i)^*\mathcal{E}$, for $\mathcal{E}$ a universal sheaf over $A\times \mathcal{M}(w)$. $\Psi_K$ is the right-adjoint of $\Phi_K$. Denote by $\phi^{\prime}_K$ and $\psi^{\prime}_K$ their $K$-theoretic analogues (cf. the maps (\ref{phi prime}) and (\ref{psi prime})). The composition $\phi^{\prime}_K\circ \psi^{\prime}_K$ of $K$-rings will be denoted by \begin{equation}\label{f tilde sub K}\Tilde{f}^{\prime}_K: K^{\bullet}_{top}(Kum_n(A))\rightarrow K^{\bullet}_{top}(Kum_n(A))\end{equation} and the map
\begin{equation}\label{f sub K}
f^{\prime}_K = ch([E^{\bullet}])|_{K\times K}\sqrt{td_{Kum_n(A)} \times td_{Kum_n(A)}}
\end{equation}
is the corresponding map of cohomology rings. The correspondence $f_K$ of (\ref{restriction of f}) is therefore the normalization of $f^{\prime}_K$ in analogy to the normalization of $f^{\prime}$ in  (\ref{finalcorrespondence}). There is a commutative diagram

\vspace{5mm}
\begin{center}
\adjustbox{scale = 0.85}{
\begin{tikzcd}
                                                                 &                                                                                     & K^{\bullet}(A) \arrow[rd, "\phi"] \arrow[rrd, "\phi_K", bend left] &                                                    &                                                 \\
K^{\bullet}(Kum_n(A)) \arrow[r] \arrow[rru, "\psi_K", bend left] & K^{\bullet}(\mathcal{M}(w)) \arrow[ru, "\psi"] \arrow[rr, "\tilde{f}"]   \arrow[d]           &                                                                    & K^{\bullet}(\mathcal{M}(w)) \arrow[d] \arrow[r]    & K^{\bullet}(Kum_n(A)) \arrow[ldd, "\tilde{ch}"] \\
                                                                 & {H^*(\mathcal{M}(w), \mathbb{Q})} \arrow[d, "i^*"] \arrow[rr, "f"]         &                                                                    & {H^*(\mathcal{M}(w), \mathbb{Q})} \arrow[d, "i^*"] &                                                 \\
                                                                 & {H^*(Kum_n(A), \mathbb{Q})} \arrow[rr, "f_K"] \arrow[luu, "(\tilde{ch})^{\dagger}"] &                                                                    & {H^*(Kum_n(A), \mathbb{Q})} \arrow[d, "pr_i"]      &                                                 \\
                                                                 & {H^{4n-i}(Kum_n(A), \mathbb{Q})} \arrow[u, "\iota_{4n-i}"] \arrow[rr, "{f_{K, i}}"] &                                                                    & {H^i(Kum_n(A), \mathbb{Q})}                        &                                                
\end{tikzcd}
}
\end{center}
\vspace{5mm}

\begin{Rem} Let $\mathcal{E}$ be a universal sheaf over $A\times \mathcal{M}(w)$. By \cite[Theorem 1]{Markman3}, $c_{2n+4}(\mathcal{E})$ is the class of the diagonal of $\mathcal{M}(w)\times \mathcal{M}(w)$. However, $\mathrm{dim}_{\mathbb{C}}(\mathcal{M}(w)) = 2n+4$ while $\mathrm{dim}_{\mathbb{C}}(Kum_n(A)) = 2n$, so the restriction $c_{2n+4}(\mathcal{E})|_{K\times K}$ is not the class of the diagonal of $Kum_n(A)\times Kum_n(A)$. As a result, the image of the correspondence $f_{K}$ does not generate the entire cohomology ring $H^*(Kum_n(A), \mathbb{Q})$.
\end{Rem}

 For $(v, v)\geq 6$, recall that $p: \mathcal{Y}\rightarrow \mathfrak{M}_{v^{\perp}}$ is the universal family (\ref{Kummer universal family}) of $v^{\perp}$-marked generalized Kummers. Let $t_0\in \mathfrak{M}_{v^{\perp}}$ denote a point whose fiber is the isomorphism class $(Kum_a(v), \eta_0)$, where $Kum_a(v)$ denotes a fiber of the Albanese map (\ref{albanese map}), and
    $$
    \eta_0: H^2(Kum_a(v), \mathbb{Q})\longrightarrow v^{\perp}
    $$
    a fixed isometry. Choose a connected component $\mathfrak{M}_{v^{\perp}}^0$ containing $t_0$.  Recall that there is a universal family $\Pi: \Tilde{\mathcal{M}}\rightarrow \mathfrak{M}_{v^{\perp}}^0$ of which the fiber over $t_0$ is $\mathcal{M}(v)$. Let $t$ be a point parametrizing the fiber $Y_{t}$ of $p$ and $\mathcal{M}_{t}$ of $\Pi$. There are corresponding inclusions $i_{K(v)}^*: Kum_a(v)\subset \mathcal{M}(v)$ and $i_{Y_{t}}^*: Y_{t}\subset \mathcal{M}_{t}$. Define the map \begin{equation}\label{phi sub k}\phi^{\prime}_{K}: K^{\bullet}_{top}(A)\rightarrow K^{\bullet}_{top}(Kum_a(v))\end{equation} with respect to the first inclusion.
    
    \begin{Rem}\label{parallel transport}
        By the result \cite[Lemma 2.1]{Markman6}, once we fix a universal family $p: \mathcal{Y}\rightarrow \mathfrak{M}^0_{v^{\perp}}$, every local system over $\mathfrak{M}^0_{v^{\perp}}$ is trivial. It follows that parallel transport over the moduli space $\mathfrak{M}^0_{v^{\perp}}$ is well-defined. Thus, we may define the map \begin{equation}\label{phi prime Y}\phi^{\prime}_{Y_{t}}: K^{\bullet}_{top}(A)\rightarrow K^{\bullet}_{top}(Y_{t})\end{equation} as the composition of $\phi^{\prime}_{K}$ with a parallel transport operator. 
    \end{Rem}

\begin{Prop}\label{phi_K pulls back the Mukai pairing}
The map $\phi^{\prime}_{K}$ given by (\ref{phi sub k}) pulls back the Mukai pairing on $K^{\bullet}(Kum_n(A))$ to $(n+2)$-times the Mukai pairing on $K^{\bullet}(A)$. In particular, the restriction of the Mukai pairing to the image of $\phi^{\prime}_{K}$ is nondegenerate.
\end{Prop}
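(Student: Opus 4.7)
My plan is to interpret $\phi'_K$ as a Fourier-Mukai transform on topological $K$-theory, with kernel $[\mathcal{U}] \in K^{\bullet}_{top}(A \times Kum_n(A))$, where $\mathcal{U} := (id_A \times i)^*\mathcal{E}$ is the restriction of the universal sheaf on $A \times \mathcal{M}(w)$. Since $Kum_n(A)$ has trivial canonical bundle, the right adjoint of $\phi'_K$ with respect to the Mukai pairings is the Fourier-Mukai transform with kernel $[\mathcal{U}^\vee]$, which is precisely $\psi'_K$ by construction. The standard Fourier-Mukai adjunction then supplies the identity
\[(\phi'_K(x), \phi'_K(y))_{Kum_n(A)} = (x, (\psi'_K \circ \phi'_K)(y))_A\]
where the Mukai pairings are defined via Euler characteristics. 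The proposition is therefore equivalent to the claim that the endomorphism $T := \psi'_K \circ \phi'_K$ of $K^{\bullet}(A)$ acts as multiplication by $n+2$.

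Next I would observe that $T$ is $\mathrm{Spin}(V)_w$-equivariant. This follows from the $\mathrm{Spin}(V)_w$-equivariance of the universal sheaf $\mathcal{E}$ up to the line-bundle twist $L_g$ of \cite[Theorem 1.2(2)]{Markman1}, since this twist cancels in the composition $\psi'_K \circ \phi'_K$. By Lemma~\ref{Decomposition of G-representations}, the space $K^{\bullet}(A) = \mathbb{Q}w \oplus w^\perp_\mathbb{Q} \oplus K^1(A)$ decomposes as a direct sum of pairwise non-isomorphic irreducible $\mathrm{Spin}(V)_w$-representations, a decomposition which is moreover orthogonal with respect to the Mukai pairing on $K^{\bullet}(A)$ (since $K^{ev} \perp K^{odd}$ and $w \perp w^\perp$ by definition). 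By Schur's lemma, $T$ thus acts as a scalar on each of the three summands, reducing the task to the computation of three scalars and the verification that each equals $n+2$.

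To compute the scalar on $\mathbb{Q}w$, I would use the explicit description of $\mathcal{U}$ as the ideal sheaf $\mathcal{I}_\mathcal{Z}$ of the universal subscheme $\mathcal{Z} \subset A \times Kum_n(A)$, whose fiber over a point $[\mathcal{I}_Z] \in Kum_n(A)$ is a length-$(n+1)$ zero-dimensional subscheme $Z \subset A$ summing to zero. A Grothendieck-Riemann-Roch calculation of $\phi'_K(w)$ and subsequent $\psi'_K$-computation produces the factor $n+2$, traceable to the fiber cohomology calculation $h^0(A, \mathcal{I}_Z) = 0$, $h^1(A, \mathcal{I}_Z) = n+2$, $h^2(A, \mathcal{I}_Z) = 1$ obtained from the long exact sequence of $0 \to \mathcal{I}_Z \to \mathcal{O}_A \to \mathcal{O}_Z \to 0$ together with $h^0(\mathcal{O}_A) = 1$, $h^0(\mathcal{O}_Z) = n+1$, $h^1(\mathcal{O}_A) = 2$; note that the factor $n+2$ emerges precisely because $A$ is two-dimensional as an Abelian variety. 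For the scalars on $w^\perp_\mathbb{Q}$ and $K^1(A)$, I would either iterate the kernel-level computation with test elements or exploit Yoshioka's Hodge isometry $\tilde{\xi}$ of~(\ref{YHI}) together with a Riemann-Roch conversion relating the Mukai pairing on $K^{\bullet}(Kum_n(A))$ to the BBF pairing on $H^2(Kum_n(A))$ via the Fujiki constant.

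The main obstacle will be verifying the three scalars are uniformly $n+2$: Schur's lemma reduces the problem to three computations, but pinning down each constant requires either separate Grothendieck-Riemann-Roch work or a careful Mukai-to-BBF conversion. A more uniform alternative, which I view as the cleanest route if the case-by-case analysis proves unwieldy, is to compute the Fourier-Mukai kernel of $T$ directly on $A \times A$ via base change and the projection formula, exploiting the flatness of $\mathcal{Z} \to Kum_n(A)$, and then identify the resulting kernel as $(n+2)[\Delta_*\mathcal{O}_A]$ in $K^{\bullet}(A \times A)$ modulo classes acting trivially on $K^{\bullet}(A)$.
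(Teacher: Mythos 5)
Your reduction is pointed in the right direction: the adjunction identity $(\phi'_K(x),\phi'_K(y)) = (x,\psi'_K\phi'_K(y))$ reduces the proposition to showing that $T:=\psi'_K\circ\phi'_K$ is multiplication by $n+2$ on $K^{\bullet}(A)$, and this is exactly the pivot of the paper's proof. But where you propose equivariance plus Schur plus three scalar computations, the paper's entire proof is a citation of Meachan \cite[pg. 1200]{Meachan}: the composition $\Psi_K\Phi_K$ is the endofunctor of $D^b(A)$ with Fourier--Mukai kernel $\bigoplus_{i=0}^{n+1}\mathcal{O}_{\Delta}[-2i]$ (i.e. $\Phi_K$ is a $\mathbb{P}^{n+1}$-functor), and since even shifts act trivially on $K$-theory, $T=(n+2)\mathrm{Id}$ on all of $K^{\bullet}(A)$ at once, with no case analysis and no equivariance argument. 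Your ``uniform alternative'' of identifying the kernel of $T$ on $A\times A$ is precisely this result; the efficient completion of your argument is to invoke it rather than reprove it. (Your Schur step is in principle sound, with the caveat that Schur's lemma over $\mathbb{Q}$ requires the three summands of Lemma \ref{Decomposition of G-representations} to be absolutely irreducible, which holds here.)

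The genuine gap is that none of your three scalars is actually computed, and both sketched methods have concrete problems. For $\mathbb{Q}w$: the numbers $h^{\bullet}(A,\mathcal{I}_Z)$ do not enter the computation of $T$ the way you suggest. The fiber of the composed kernel $R\pi_{13,*}\bigl(\pi_{12}^*\mathcal{U}^{\vee}[2n]\otimes\pi_{23}^*\mathcal{U}\bigr)$ over $(a,b)\in A\times A$ computes Ext-groups on $Kum_n(A)$ between the restrictions $\mathcal{U}|_{\{a\}\times Kum_n(A)}$ and $\mathcal{U}|_{\{b\}\times Kum_n(A)}$ --- not cohomology of ideal sheaves on $A$ --- and showing these Ext's are governed by $H^*(\mathbb{P}^{n+1})$ is exactly the hard content of Meachan's theorem (proved via Addington's $\mathbb{P}^n$-functor machinery). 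The coincidence $h^1(A,\mathcal{I}_Z)=n+2$ is suggestive but is not a derivation of $T|_{\mathbb{Q}w}=(n+2)\mathrm{Id}$. For $w^{\perp}_{\mathbb{Q}}$: Yoshioka's isometry (\ref{YHI}) controls only the $H^2$-component of $\tilde{ch}\circ\phi'_K$, whereas the Mukai pairing on $K^{\bullet}(Kum_n(A))$ is an Euler pairing integrating the product of the \emph{full} Chern characters against the Todd class of a $2n$-dimensional variety; the BBF/Fujiki relation says nothing about the higher-degree components of $\phi'_K(x)$, so the proposed ``Mukai-to-BBF conversion'' is an unresolved obstacle, not a routine step. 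As written, your proposal establishes the formal skeleton (adjunction, equivariance, Schur) but not the quantitative heart of the proposition, which in the paper --- and in any reasonable completion of your plan --- comes from Meachan's kernel identity.
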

\begin{proof}
The results of \cite[pg. 1200]{Meachan} state that $\Psi_K\Phi_K$ is the endofunctor on $D^b(A)$ with corresponding Fourier-Mukai kernel $\oplus_{i=0}^{n+1}\mathcal{O}_{\Delta}[-2i]$, where $\Delta$ denotes the diagonal of $A\times A$. The induced morphism $\psi^{\prime}_K\phi^{\prime}_K$ of $K$-rings is associated to the kernel object is therefore $$\sum_{i = 0}^{n+1}[\mathcal{O}_{\Delta}]_* = (n+2)Id_A.$$
The composition $\phi^{\prime}_{K}\psi^{\prime}_{K}$ is hence nondegenerate and in particular, the restriction of the Mukai pairing to the image of $\phi_K^{\prime}$ is nondegenerate. 
\end{proof}

\begin{Cor}\label{surjectivity of the right-adjoint}
The right adjoint $\psi^{\prime}_{K}: K^{\bullet}_{top}(Kum_n(A))\rightarrow K^{\bullet}_{top}(A)$ is surjective. 
\end{Cor}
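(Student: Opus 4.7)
The plan is to read off surjectivity of $\psi^{\prime}_K$ directly from the computation that already appeared in the proof of Proposition~\ref{phi_K pulls back the Mukai pairing}. That proof invoked \cite[pg.~1200]{Meachan} to identify the composition $\Psi_K\circ \Phi_K$ of Fourier--Mukai functors on $D^b(A)$ with the functor whose kernel is $\bigoplus_{i=0}^{n+1}\mathcal{O}_{\Delta}[-2i]$. Passing to induced maps on $K$-theory and using additivity of Fourier--Mukai actions with respect to direct sums of kernels, the corresponding composition of $K$-theoretic correspondences is
\[
\psi^{\prime}_K\circ \phi^{\prime}_K \;=\; \sum_{i=0}^{n+1}(-1)^{2i}[\mathcal{O}_{\Delta}]_{*} \;=\; (n+2)\cdot \mathrm{Id}_{K^{\bullet}_{top}(A)}.
\]

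Since the conventions fix rational coefficients throughout and $n+2\neq 0$, the scalar $(n+2)$ is invertible. In particular, for every class $x\in K^{\bullet}_{top}(A)$ one has the identity
\[
x \;=\; \tfrac{1}{n+2}\,\psi^{\prime}_K\bigl(\phi^{\prime}_K(x)\bigr),
\]
which exhibits $x$ as lying in the image of $\psi^{\prime}_K$. This proves surjectivity.

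I do not expect any genuine obstacle here: the content is really contained in the proof of the preceding proposition, and the corollary is extracted by observing that the composition being a nonzero scalar multiple of the identity forces the second factor to be surjective (and dually the first factor to be injective). The only point that requires a brief remark is the use of rational coefficients to invert $n+2$, which is consistent with the global convention that $K^{\bullet}_{top}$ denotes $K^{\bullet}_{top}(-,\mathbb{Z})\otimes_{\mathbb{Z}}\mathbb{Q}$.
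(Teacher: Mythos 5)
Your proof is correct and takes essentially the same route as the paper: the paper's one-line proof of this corollary likewise extracts surjectivity from the identity $\psi^{\prime}_K\circ\phi^{\prime}_K = (n+2)\,\mathrm{Id}$ on $K^{\bullet}_{top}(A)$, established in the proof of Proposition \ref{phi_K pulls back the Mukai pairing} via the kernel $\bigoplus_{i=0}^{n+1}\mathcal{O}_{\Delta}[-2i]$ from \cite{Meachan}. Your explicit remarks on the vanishing of the shift signs in $K$-theory and on the invertibility of $n+2$ over $\mathbb{Q}$ merely spell out details the paper leaves implicit.
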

\begin{proof}
By the previous proposition, there is an adjoint $K$-theoretic morphism $\phi^{\prime}_{K}$ such that the composition $\psi^{\prime}_{K}\phi^{\prime}_{K}$ gives $(n+2)Id_A$ on $K_{top}^{\bullet}(A)$.
\end{proof}

\begin{Cor}\label{image equal}
The image of $\tilde{f}^{\prime}_{K}: K^{\bullet}_{top}(Kum_n(A))\rightarrow K^{\bullet}_{top}(Kum_n(A))$ is equal to the image of $\phi^{\prime}_K$.
\end{Cor}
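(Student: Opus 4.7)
The plan is to derive this directly from the definition of $\tilde{f}^{\prime}_K$ together with the surjectivity statement of Corollary \ref{surjectivity of the right-adjoint}. Recall from (\ref{f tilde sub K}) that $\tilde{f}^{\prime}_K = \phi^{\prime}_K \circ \psi^{\prime}_K$, so by definition
\[
\operatorname{Im}(\tilde{f}^{\prime}_K) \;=\; \phi^{\prime}_K\bigl(\psi^{\prime}_K(K^{\bullet}_{top}(Kum_n(A)))\bigr).
\]
The containment $\operatorname{Im}(\tilde{f}^{\prime}_K) \subseteq \operatorname{Im}(\phi^{\prime}_K)$ is therefore automatic.

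For the reverse inclusion, I would invoke Corollary \ref{surjectivity of the right-adjoint}, which asserts that $\psi^{\prime}_K\colon K^{\bullet}_{top}(Kum_n(A)) \to K^{\bullet}_{top}(A)$ is surjective. Consequently, $\psi^{\prime}_K(K^{\bullet}_{top}(Kum_n(A))) = K^{\bullet}_{top}(A)$, whence
\[
\operatorname{Im}(\tilde{f}^{\prime}_K) \;=\; \phi^{\prime}_K(K^{\bullet}_{top}(A)) \;=\; \operatorname{Im}(\phi^{\prime}_K).
\]
This completes the proof.

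There is essentially no obstacle here; the entire content has been absorbed into Proposition \ref{phi_K pulls back the Mukai pairing} and its corollary, whose proofs rely on Meachan's computation identifying the Fourier-Mukai kernel of $\Psi_K \Phi_K$ with $\bigoplus_{i=0}^{n+1} \mathcal{O}_{\Delta}[-2i]$. Once $\psi^{\prime}_K$ is known to be surjective (equivalently, once $\psi^{\prime}_K \phi^{\prime}_K = (n+2)\,\mathrm{Id}_A$ is known), the equality of images is a one-line set-theoretic consequence of the factorization $\tilde{f}^{\prime}_K = \phi^{\prime}_K \circ \psi^{\prime}_K$.
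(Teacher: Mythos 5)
Your proof is correct and takes essentially the same route as the paper: the paper's proof of this corollary consists of citing Proposition \ref{phi_K pulls back the Mukai pairing} and Corollary \ref{surjectivity of the right-adjoint}, and your one-line deduction from the factorization $\tilde{f}^{\prime}_K = \phi^{\prime}_K \circ \psi^{\prime}_K$ together with the surjectivity of $\psi^{\prime}_K$ is exactly what those citations amount to. You have simply made explicit the set-theoretic step the paper leaves implicit.
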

\begin{proof}
This follows from Proposition \ref{phi_K pulls back the Mukai pairing} and Corollary \ref{surjectivity of the right-adjoint}.
\end{proof}

\begin{Cor}\label{generate translation invariant subring}
The Chern classes of elements in a basis of the image of $f_{Y_t}$ generate the translation invariant subalgebra $H^*(Y_t, \mathbb{Q})^{\Gamma}$.
\end{Cor}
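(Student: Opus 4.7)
The plan is to establish the corollary first for the classical fiber $Y_{t_0}=Kum_n(A)$, and then extend to an arbitrary projective fiber $Y_t$ by parallel transport along $\mathfrak{M}^{0}_{v^{\perp}}$.

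For $Y_{t_0}$, Corollary \ref{image equal} identifies the image of $\tilde{f}^{\prime}_K$ with that of $\phi^{\prime}_K$ in $K^{\bullet}_{top}(Kum_n(A))$. It therefore suffices to show that the subring of $H^{*}(Kum_n(A),\mathbb{Q})$ generated by $\mathrm{ch}(\phi^{\prime}_K(K^{\bullet}_{top}(A)))$ coincides with $H^{*}(Kum_n(A),\mathbb{Q})^{\Gamma}$. The identity $\mathcal{U}=(\mathrm{id}_A\times i)^{*}\mathcal{E}$ together with proper base change in $K$-theory gives the factorization $\phi^{\prime}_K=i^{*}\circ\phi^{\prime}$. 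By \cite[Corollary~2]{Markman3}, the K\"{u}nneth factors of $\mathrm{ch}(\mathcal{E})$ generate $H^{*}(\mathcal{M}(w),\mathbb{Q})$ as a ring; equivalently, the Chern classes of a basis of $\phi^{\prime}(K^{\bullet}_{top}(A))$ generate $H^{*}(\mathcal{M}(w),\mathbb{Q})$. Since $i^{*}$ is a ring homomorphism that commutes with the Chern character, applying it yields that the Chern classes of a basis of $\phi^{\prime}_K(K^{\bullet}_{top}(A))$ generate $i^{*}H^{*}(\mathcal{M}(w),\mathbb{Q})=H^{*}(Kum_n(A),\mathbb{Q})^{\Gamma}$, where the final equality holds with $\mathbb{Q}$-coefficients, as recalled at the start of Section \ref{LCS}.

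To pass to a general fiber, we appeal to Remark \ref{parallel transport}: once the universal family is fixed, every local system over $\mathfrak{M}^{0}_{v^{\perp}}$ is trivial, and $\phi^{\prime}_{Y_t}$ is defined in (\ref{phi prime Y}) as the composition of $\phi^{\prime}_K$ with a parallel transport operator. Proposition \ref{twistor family} supplies a flat reflexive sheaf over $\tilde{\mathcal{M}}\times_C\tilde{\mathcal{M}}$ deforming $E^{1}$, which guarantees that the correspondence $\tilde{f}^{\prime}$ transports fiber-wise and that the identification in Corollary \ref{image equal} extends to $\mathrm{Im}(\tilde{f}^{\prime}_{Y_t})=\mathrm{Im}(\phi^{\prime}_{Y_t})$. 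Lemma \ref{deformation invariant} ensures that the $\Gamma$-action deforms as a local subsystem of $\mathrm{Aut}_{0}$ over $\mathfrak{M}^{0}_{v^{\perp}}$, so $H^{*}(Y_t,\mathbb{Q})^{\Gamma}$ is the parallel transport of $H^{*}(Kum_n(A),\mathbb{Q})^{\Gamma}$. Because parallel transport is a ring isomorphism on cohomology that commutes with the Chern character (Lemma \ref{AH isomorphism}), the generation statement for $Kum_n(A)$ transfers verbatim to $Y_t$.

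The principal subtlety is ensuring that the $K$-theoretic parallel transport defining $\phi^{\prime}_{Y_t}$ is compatible with the cohomological parallel transport governing $\tilde{f}^{\prime}$; this compatibility is precisely what the hyperholomorphic-sheaf apparatus underlying Proposition \ref{algebraicity of the theta class} was designed to secure, and is why the $\theta$-class normalization was introduced in subsection \ref{Moduli Space}.
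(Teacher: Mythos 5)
Your proposal is correct and follows essentially the same route as the paper's proof: both rest on \cite[Corollary~2]{Markman3} to generate $H^*(\mathcal{M}(w),\mathbb{Q})$ from the Chern classes of the image of $\phi^{\prime}$, on Corollary \ref{image equal} to identify $\mathrm{Im}(\tilde{f}^{\prime}_K)$ with $\mathrm{Im}(\phi^{\prime}_K)$, on the identification $\mathrm{Im}(i^*)=H^*(Y_t,\mathbb{Q})^{\Gamma}$ from the start of Section \ref{LCS}, and on conjugation by a parallel transport operator (Remark \ref{parallel transport}) to pass to an arbitrary fiber $Y_t$. Your write-up merely spells out these steps in more detail (e.g.\ the factorization $\phi^{\prime}_K=i^*\circ\phi^{\prime}$), with only the cosmetic slip that Lemma \ref{AH isomorphism} is cited for compatibility of parallel transport with the Chern character, which really follows from naturality of $ch$ rather than from that lemma.
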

\begin{proof}
Using \cite[Corollary 2]{Markman3}, the Chern classes of a basis of the image of $f$ generate $H^*(\mathcal{M}_t, \mathbb{Q})$. We have by assumption the Chern classes of $f_{Y_t}$ are the classes of the image of the correspondence $f$ restricted to $H^*(Y_t, \mathbb{Q})$. After possibly conjugating the correspondence with a parallel transport operator (see Remark \ref{parallel transport}), we get a correspondence $$f_{Y_t}: H^*(Y_t, \mathbb{Q})\rightarrow H^*(Y_t, \mathbb{Q})$$ which is surjective onto the image of the restriction $i^*: H^*(\mathcal{M}_t, \mathbb{Q})\rightarrow H^*(Y_t, \mathbb{Q})$, by Corollary \ref{image equal}.
\end{proof}

     \begin{Rem}
       For $C$ an analytic subset of $\mathfrak{M}_{w^{\perp}}^0$, denote by $\mathcal{Y}\times_C\mathcal{Y}$ the Cartesian product over $C$ and $\Tilde{p}: \mathcal{Y}\times_C\mathcal{Y}\rightarrow C$ the natural morphism. Denote by $\ell\in \Omega_{v^{\perp}}$ the image of $t$ under the period map and by $T_{\ell}$ the fiber of the map $\pi: \mathcal{T}\rightarrow \Omega_{v^{\perp}}$ over $\ell$ (cf. Proposition \ref{period domain weight 1 Hodge structures}).
       Recall that by definition $\tilde{\mathcal{M}}: = \mathcal{Y}\times_{\underline{\Gamma}_v} Per^*\mathcal{T}$ (see (\ref{definition of family of moduli spaces})). Let $q: \Tilde{\mathcal{M}}\rightarrow Per^*\mathcal{T}/\Gamma_v$ denote the projection. Then the cartesian product $\mathcal{Y}\times_q\mathcal{Y} = (q\times q)^{-1}(\zeta(\mathfrak{M}^0_{v^{\perp}}))$, where $\zeta: \mathfrak{M}^0_{v^{\perp}}\rightarrow Per^*\mathcal{T}/\Gamma_v$ is the zero section. It follows there is a global morphism $\mathcal{Y}\times_q \mathcal{Y}\hookrightarrow \Tilde{\mathcal{M}}\times_q\Tilde{\mathcal{M}}$, and hence a global morphism $\mathcal{Y}\times_C \mathcal{Y}\hookrightarrow \Tilde{\mathcal{M}}\times_C\Tilde{\mathcal{M}}$.
    \end{Rem}
     Denote the quotient cohomology ring
    \begin{equation}\overline{H}^i(Y_t, \mathbb{Q}) := H^i(Y_t,\mathbb{Q})/[Alg^i(Y_t)+R^i(Y_t)],\end{equation} 
    in analogy to $\overline{H}^i(\mathcal{M}_t, \mathbb{Q})$ appearing in Proposition \ref{surjectivity}.
      Let 
      
      $$g_{Y_t, i}: H^{4n-i}(Y_t, \mathbb{Q})\longrightarrow H^i(Y_t, \mathbb{Q})$$ be the map induced by the restriction of the cycle given in (\ref{g}) to $Y_{t}\times Y_{t}$, and let \begin{equation}\label{overline g}\overline{g}_{Y_t, i}: H^{4n-i}(Y_t, \mathbb{Q})\longrightarrow \overline{H}^i(Y_t, \mathbb{Q})\end{equation} denote the composition of $g_{Y_t, i}$ with the surjection 
      \begin{equation}\label{surjection onto the quotient vector space}
      H^i(Y_t, \mathbb{Q})\longrightarrow \overline{H}^i(Y_t, \mathbb{Q}).
      \end{equation}
      Analogously, define \begin{equation}\label{overline f}\overline{f}_{Y_t, i}: H^{4n-i}(Y_t, \mathbb{Q})\longrightarrow \overline{H}^i(Y_t, \mathbb{Q})\end{equation} as the composition of the restriction of the cycle (\ref{restriction of f}) to $Y_t\times Y_t$ and the surjection (\ref{surjection onto the quotient vector space}).

\begin{Prop}\label{surjectivity of the restriction maps}
   Let $Y_t$ denote a variety of generalized Kummer type. If $k_0$ is a positive integer for which the restriction homomorphism $i^*$ is surjective for all $k$ in the range $0\leq k \leq k_0$, then the images of $\overline{f}_{Y_t, k}$ and $\overline{g}_{Y_t, k}$, given by (\ref{overline f}) and (\ref{overline g}) respectively, coincide for each such $k\geq 1$ and $\overline{g}_{Y_t, k}$ is surjective onto $\overline{H}^k(Y_t, \mathbb{Q})^{\Gamma}$ for each such $k\geq 2$.
\end{Prop}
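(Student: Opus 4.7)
The plan is to reduce both claims to computations already established for $\mathcal{M}(w)$ and transported to the deformation $Y_t$. For the equality of $\overline{f}_{Y_t, k}$ and $\overline{g}_{Y_t, k}$, I exploit the cohomological identity from Corollary \ref{f and g} together with a codimension argument; for the surjectivity, I invoke Corollary \ref{generate translation invariant subring}, which characterizes the image of $f_{Y_t}$ as a set of algebra generators for the invariant subring $H^*(Y_t, \mathbb{Q})^{\Gamma}$.

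First, I would establish $\overline{f}_{Y_t, k} = \overline{g}_{Y_t, k}$ in the relevant range. Transporting the identity of Corollary \ref{f and g} from the distinguished fiber $\mathcal{M}(w) \times \mathcal{M}(w)$ to $\mathcal{M}_t \times \mathcal{M}_t$ via a parallel transport operator on $\mathfrak{M}^0_{w^{\perp}}$ (legitimate by Remark \ref{parallel transport}), the difference $f_k - g_k$ is the degree-$2k$ part of
\[
ch([\mathcal{O}_{\Delta_{\mathcal{M}_t}}]) \exp\!\left(\frac{-p_1^*\alpha - p_2^*\beta}{2n+2}\right) \sqrt{td_{\mathcal{M}_t \times \mathcal{M}_t}}.
\]
Since $\Delta_{\mathcal{M}_t}$ has codimension $2n + 4$, the class $ch_j([\mathcal{O}_{\Delta_{\mathcal{M}_t}}])$ vanishes for $j < 2n + 4$, and multiplication by the unit-leading classes $\exp(\cdot)$ and $\sqrt{td}$ preserves the bottom degree. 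Hence the full expression vanishes in cohomological degree $< 4n + 8$. Pulling back along the closed embedding $\iota: Y_t \times Y_t \hookrightarrow \mathcal{M}_t \times \mathcal{M}_t$ preserves degree, so $\iota^*(f_k - g_k) = 0$ for $k \leq 2n + 3$. In the range addressed by the main theorem we have $k \leq k_0 < 2(n+1) \leq 2n + 4$, so this gives $f_{Y_t, k} = g_{Y_t, k}$ as linear maps $H^{4n - k}(Y_t) \to H^k(Y_t)$, and in particular the two induced maps to $\overline{H}^k(Y_t, \mathbb{Q})$ agree.

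Next, I would prove that $\overline{f}_{Y_t, k}$ is surjective onto $\overline{H}^k(Y_t, \mathbb{Q})^{\Gamma}$ for $k \geq 2$; combined with the previous step, this gives the same conclusion for $\overline{g}_{Y_t, k}$. The hypothesis that $i^*$ is surjective in degrees $\leq k_0$ places us in the setting of Corollary \ref{generate translation invariant subring}, so the image of $f_{Y_t}$ generates $H^*(Y_t, \mathbb{Q})^{\Gamma}$ as a $\mathbb{Q}$-algebra. Given $\xi \in H^k(Y_t, \mathbb{Q})^{\Gamma}$ with $2 \leq k \leq k_0$, write $\xi$ as a polynomial in elements of the image of $f_{Y_t}$ and separate the degree-$k$ linear summand $v_k$ (which lies in the image of $f_{Y_t, k}$) from the remaining polynomial summand $r_k$, which is a sum of products of classes of strictly smaller degree; hence $r_k \in R^k(Y_t)$. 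Thus $\xi \equiv v_k \pmod{R^k + Alg^k}$, so $\overline{f}_{Y_t, k}$ surjects onto $\overline{H}^k(Y_t, \mathbb{Q})^{\Gamma}$. The reverse containment of the image in $\overline{H}^k(Y_t)^{\Gamma}$ is automatic since $f_{Y_t, k}$ factors through $\Gamma$-invariant cohomology.

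The main subtlety is ensuring that Corollary \ref{generate translation invariant subring} and the algebraicity statement of Proposition \ref{algebraicity of the theta class} genuinely apply on an arbitrary projective fiber $Y_t$: the class $\theta([E^1_{Y_t}])$ is defined only as a twisted characteristic class, and its remaining algebraic once $Y_t$ ceases to be a moduli space of sheaves depends fundamentally on Verbitsky's theory of hyperholomorphic sheaves. With that input in place, both parts of the proof become formal consequences of the codimension bound on $\Delta_{\mathcal{M}_t}$ and the ring-generation property of the image of $f_{Y_t}$.
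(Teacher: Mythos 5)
Your first step (the equality $f_{Y_t,k} = g_{Y_t,k}$) is correct and matches the paper's mechanism: the difference of the two kernels is $ch([\mathcal{O}_{\Delta}])\exp(\cdot)\sqrt{td}$, whose components vanish in cohomological degree $<2(2n+4)$ because the diagonal has codimension $2n+4$; this is exactly the computation inside Corollary \ref{f and g}, and restricting the degree-$2k$ kernel component along $Y_t\times Y_t\hookrightarrow \mathcal{M}_t\times\mathcal{M}_t$ is legitimate since $g_{Y_t,k}$ is by definition induced by the restricted cycle. (In fact your argument is slightly cleaner than the paper's here, since it does not use surjectivity of $i^*$ for this half.)

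The surjectivity step, however, has a genuine gap at the parenthetical ``(which lies in the image of $f_{Y_t,k}$).'' Corollary \ref{generate translation invariant subring} concerns the image of the \emph{full} correspondence $f_{Y_t}\colon H^*(Y_t)\to H^*(Y_t)$, whose kernel has components in many degrees: a degree-$k$ class in that image can arise from inputs $x\in H^m(Y_t)$ for any $m$, via the kernel component of degree $4n+k-m$. The proposition — and its downstream use in Corollary \ref{CharlesCorollary}, which requires a single codimension-$k$ cycle acting $H^{4n-k}(Y_t)\to H^k(Y_t)$ — needs the image of the degree-restricted map $f_{Y_t,k}$, i.e.\ only the $m = 4n-k$ contribution. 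The identification of these two images is not formal: on $\mathcal{M}(w)$ it is precisely the content of Proposition \ref{mapscoincide}, proved via self-adjointness of $f$ and the decomposition of $K^{\bullet}(A)$ into irreducible $\mathrm{Spin}(V)_w$-representations, and you assert its analogue on $Y_t$ without proof. The paper avoids the issue by working degreewise throughout: it quotes the surjectivity of $\overline{g}_k\colon H^{4n+8-k}(\mathcal{M}(w))\to\overline{H}^k(\mathcal{M}(w))$ from Corollary \ref{f and g} (which rests on Propositions \ref{surjectivity} and \ref{mapscoincide}) and then composes with the restriction $i^*$, which is surjective in degrees $\leq k_0$ by hypothesis and carries $R^k+Alg^k$ into $R^k+Alg^k$ because it is a ring homomorphism preserving algebraic classes. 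Your argument can be repaired by the same substitution: given $\xi\in H^k(Y_t)^{\Gamma} = i^*H^k(\mathcal{M}_t)$, write a preimage as $f_k(x)+r+a$ with $r\in R^k$, $a\in Alg^k$ using the degreewise surjectivity upstairs, and restrict — rather than separating a linear summand from a polynomial expression in generators of the full image.
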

\begin{proof}
    By Corollary \ref{f and g}, the maps $\overline{f}_k$ and $\overline{g}_k$ coincide for $k\geq 2$. For $k$ in the range $0\leq k\leq k_0$ the restriction homomorphism is surjective and it is clear the restrictions $\overline{f}_{Y_t, k}$ and $\overline{g}_{Y_t, k}$ coincide as well. 
 By Corollary \ref{f and g}, $\overline{g}_k$ is surjective in degree $\geq 2$, so therefore in degrees $2\leq k\leq k_0$, $\overline{g}_{Y_t, k}$ is surjective.
\end{proof}

\begin{Th}\label{LSC for Kummers}
    Let $Y_t$ be a variety of generalized Kummer type. If $k_0$ is a positive integer for which the restriction homomorphism $i^*: H^k(\mathcal{M}_t, \mathbb{Q})\rightarrow H^k(Y_t, \mathbb{Q})$ is surjective for all $k$ in the range $0\leq k\leq k_0$, then the Lefschetz standard conjecture holds for $Y_t$ in degrees $0\leq k\leq k_0$. 
\end{Th}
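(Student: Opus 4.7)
The plan is to prove the theorem by induction on the cohomological degree $k$ in the range $0 \leq k \leq k_0$, combining the surjectivity established in Proposition \ref{surjectivity of the restriction maps} with the general criterion of Corollary \ref{CharlesCorollary}. The base cases $k = 0$ and $k = 1$ require no work: degree zero is trivial, and since $Y_t$ is an IHSM it is simply connected, so $H^1(Y_t, \mathbb{Q}) = 0$ and the LSC holds vacuously in degree one. I would then fix $k$ in the range $2 \leq k \leq k_0$ and assume the LSC for $Y_t$ in all degrees strictly less than $k$.

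First I would exploit the surjectivity hypothesis. Because the image of $i^*$ coincides with the $\Gamma$-invariants over $\mathbb{Q}$, surjectivity of $i^*$ in degree $k$ forces the identification
\[
H^k(Y_t, \mathbb{Q}) = H^k(Y_t, \mathbb{Q})^{\Gamma}, \qquad \overline{H}^k(Y_t, \mathbb{Q})^{\Gamma} = \overline{H}^k(Y_t, \mathbb{Q}).
\]
Proposition \ref{surjectivity of the restriction maps} then yields surjectivity of the correspondence
\[
\overline{g}_{Y_t, k} : H^{4n-k}(Y_t, \mathbb{Q}) \longrightarrow \overline{H}^k(Y_t, \mathbb{Q}) = H^k(Y_t,\mathbb{Q})/[Alg^k(Y_t)+R^k(Y_t)].
\]

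Next I would upgrade this to a statement about algebraic cycles. Applying Proposition \ref{twistor family}, I would select a reduced connected projective curve $C \subset \mathfrak{M}_{w^{\perp}}^0$ containing both the point that parametrizes $\mathcal{M}(w)$ and the point $t$ parametrizing $\mathcal{M}_t$, together with a flat reflexive sheaf on $\tilde{\mathcal{M}} \times_C \tilde{\mathcal{M}}$ deforming $E^1$. Proposition \ref{algebraicity of the theta class} then guarantees that each graded summand of $-\theta([E^1_{\mathcal{M}_t}])\sqrt{td_{\mathcal{M}_t \times \mathcal{M}_t}}$ is an algebraic class on $\mathcal{M}_t \times \mathcal{M}_t$. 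Pulling back along the algebraic embedding $Y_t \times Y_t \hookrightarrow \mathcal{M}_t \times \mathcal{M}_t$ produces an algebraic codimension-$k$ cycle on $Y_t \times Y_t$ whose induced correspondence is precisely $g_{Y_t, k}$ in subsection \ref{restriction of the correspondence}.

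With the inductive hypothesis providing the LSC for $Y_t$ in all degrees below $k$, and with an algebraic correspondence surjecting onto $\overline{H}^k(Y_t, \mathbb{Q})$ in hand, Corollary \ref{CharlesCorollary} immediately yields the LSC in degree $k$, closing the induction. The only conceptual point requiring care is that algebraicity of the deformed class survives restriction to $Y_t \times Y_t$ and produces the correct correspondence; this is controlled by the monodromy invariance of the $\theta$-classes (Proposition \ref{monodromy invariance of the theta class}) combined with the hyperholomorphic mechanism of Proposition \ref{algebraicity of the theta class}, so at this stage the argument is essentially bookkeeping on top of the work already done in sections \ref{Background} and \ref{correspondence}.
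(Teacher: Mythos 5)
Your inductive skeleton is exactly the paper's: base cases $k=0$ (trivial) and $k=1$ (vacuous since $H^1(Y_t,\mathbb{Q})=0$), the identification $H^k(Y_t,\mathbb{Q})=H^k(Y_t,\mathbb{Q})^{\Gamma}$ forced by surjectivity of $i^*$, surjectivity of $\overline{g}_{Y_t,k}$ from Proposition \ref{surjectivity of the restriction maps}, a deformation of $E^1$ along a curve $C$ from Proposition \ref{twistor family}, and Corollary \ref{CharlesCorollary} to close the induction. The one place you genuinely deviate is where the algebraic correspondence on $Y_t\times Y_t$ comes from, and there your argument has a gap. You apply Proposition \ref{algebraicity of the theta class} to $\mathcal{M}_t\times\mathcal{M}_t$ and then pull back along $Y_t\times Y_t\hookrightarrow \mathcal{M}_t\times\mathcal{M}_t$. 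But that proposition is stated, and its proof via the twisted-sheaf $\kappa$-classes works, only for \emph{projective} fibers of $\Pi$, and projectivity of $\mathcal{M}_t$ is not among the hypotheses of the theorem: only $Y_t$ is (implicitly) projective, so that the LSC makes sense for it. For a non-algebraic fiber $\mathcal{M}_t$ the assertion that the graded summands of $-\theta([E^1_{\mathcal{M}_t}])\sqrt{td_{\mathcal{M}_t\times\mathcal{M}_t}}$ are algebraic does not even parse, and your ``algebraic embedding'' $Y_t\times Y_t\hookrightarrow\mathcal{M}_t\times\mathcal{M}_t$ is a priori only holomorphic. As written, the step ``Proposition \ref{algebraicity of the theta class} then guarantees\dots'' is therefore unsupported for the fibers to which you apply it.

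The paper's proof avoids this entirely by never passing through $\mathcal{M}_t\times\mathcal{M}_t$: using the global embedding $\mathcal{Y}\times_C\mathcal{Y}\hookrightarrow\tilde{\mathcal{M}}\times_C\tilde{\mathcal{M}}$ noted in the remark preceding the theorem, it restricts the flat reflexive sheaf of Proposition \ref{twistor family} to the Kummer family $\tilde{p}:\mathcal{Y}\times_C\mathcal{Y}\rightarrow C$ and obtains a flat section of $R^*\tilde{p}_*\mathbb{Q}$ through the restricted class $-\theta_k(E^1)_K$, whose value over $t$ is the $\theta$-class of a twisted sheaf on the projective variety $Y_t\times Y_t$ itself, hence algebraic there by the mechanism of Remark \ref{kappa class is analytic}; projectivity is then needed only where it is hypothesized. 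Your route is repairable: when $Y_t$ is projective, the global isogeny between $Per^*\mathcal{T}$ and the family of third intermediate Jacobians of $p$ recalled in subsection \ref{universal deformation}, together with $h^{3,0}(Y_t)=0$, shows the torus fiber $T_{\ell}$ is an abelian fourfold, so $\mathcal{M}_t\cong(T_{\ell}\times Y_t)/\Gamma$ is projective after all and Proposition \ref{algebraicity of the theta class} becomes applicable. But that argument appears nowhere in your proposal, and without it (or the paper's restriction-to-$\mathcal{Y}\times_C\mathcal{Y}$ device) the production of the algebraic cycle on $Y_t\times Y_t$ is incomplete.
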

\begin{proof}
The proof is nearly identical to that of Theorem \ref{LSCforM}. Let $E^1_{Y_{t_1}}$ denote the restriction of $E^1$ to $Y_{t_1}\times Y_{t_1}$. Suppose $k$ is in the range $0\leq k\leq k_0$. Choose $C$ as in Proposition \ref{twistor family} and let $t_0, t_1\in \mathfrak{M}_{w^{\perp}}^0$ be points parametrizing $Kum_n(A)$ and $Y_{t_1}$ respectively. Proposition \ref{twistor family} implies there is a flat section $\tilde{\sigma}$ of the local system $R^*\Tilde{p}_*\mathbb{Q}$ through the class 
$$
-\theta_{k}(E^1)_K\in H^k(Kum_n(A)\times Kum_n(A))
$$
where $-\theta_{k}(E^1)_K$ is defined as the composition of $-\theta_k(E^1)$ from (\ref{theta class 2}) with inclusion, and which is algebraic in $H^k(Y_{t_1}\times Y_{t_1})$. Then there is a flat section $\Tilde{\rho}$ of the local system $R^*\Tilde{p}_*\mathbb{Q}$ through $g_K$ and algebraic in $H^*(Y_{t_1}\times Y_{t_1})$. We prove the theorem by induction. The result is clear for $k = 0$. Assume the LSC holds for $Y_{t_1}$ in degrees up to $k-1$, for $k\leq k_0$. The correspondence 
$$
\Tilde{\rho}_i(t_2): H^{4n-k}(Y_{t_1})\longrightarrow H^k(Y_{t_1})
$$
is the fiber of the morphism of local systems 
$$
R^{4n-i}p_*\mathbb{Q}\longrightarrow R^ip_*\mathbb{Q}.
$$
The morphism $\overline{g}_{Y_{t_1}, k}$ is surjective for $k\geq 2$ by Proposition \ref{surjectivity of the restriction maps} and as a result $\Tilde{\rho}_k(t_1)$ is surjective for $k\geq 2$. The result therefore follows in degree $k$ from Corollary \ref{CharlesCorollary}. The first cohomology of $Y_{t_1}$ vanishes and hence $H^k(Y_{t_1}, \mathbb{Q})$ satisfies the LSC in degrees $0\leq k\leq k_0$ by induction.
\end{proof}

Note the following immediate application:

\begin{Cor}
For $Y$ a variety of generalized Kummer deformation type, the Lefschetz standard conjecture holds in degrees $2$ and $3$.
\end{Cor}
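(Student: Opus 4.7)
The plan is to derive the corollary directly from Theorem~\ref{LSC for Kummers} applied with $k_0 = 3$. Under that reduction, it suffices to verify that the restriction homomorphism $i^*: H^k(\mathcal{M}_t, \mathbb{Q}) \to H^k(Y, \mathbb{Q})$ is surjective for every $k$ in the range $0 \leq k \leq 3$, where $Y$ is realized as a fiber $Y_t$ of the universal family $p: \mathcal{Y} \to \mathfrak{M}^0_{v^{\perp}}$ and $\mathcal{M}_t$ is the corresponding fiber of the universal family $\Pi$ of moduli spaces. Since the decomposition from Lemma~\ref{Markman 10.1(3)} deforms across the moduli space by parallel transport (Remark~\ref{parallel transport}), the same K\"unneth-type identification as in the proof of Lemma~\ref{cohomology ring of M} shows that the image of $i^*$ is precisely the subalgebra $H^*(Y, \mathbb{Q})^{\Gamma}$ of $\Gamma$-invariants, where $\Gamma$ is the fiber at $t$ of the deformed local system from Lemma~\ref{deformation invariant}.

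Consequently, checking surjectivity of $i^*$ in degree $k$ reduces to showing that the induced action of $\Gamma$ on $H^k(Y, \mathbb{Q})$ is trivial. Degrees $0$ and $1$ are immediate, the latter because $H^1$ vanishes for any IHSM, and degree $2$ follows from the very definition of $\mathrm{Aut}_0(Y)$ as the subgroup of automorphisms acting trivially on $H^2$, together with the containment $\Gamma \subset \mathrm{Aut}_0(Y)$ supplied by Remark~\ref{isomorphism of gamma and torsion points} and Lemma~\ref{deformation invariant}.

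The main step, and the only nontrivial one, is the degree $3$ case. The relevant input is already built into the construction of the universal family (\ref{definition of family of moduli spaces}): just before that construction, the local subsystem $\mathcal{Z} \subset \mathrm{Aut}_0(p)$ consisting of automorphisms acting trivially on $H^3$ is exhibited as a trivial local system, and the trivialization $\mathcal{Z} \to \underline{\Gamma_v}$ identifies this subsystem with $\Gamma$ on every fiber. In particular, $\Gamma$ acts trivially on $H^3(Y, \mathbb{Q})$, so $H^3(Y, \mathbb{Q})^{\Gamma} = H^3(Y, \mathbb{Q})$ and $i^*$ is surjective in degree $3$. Once all four surjectivities are in hand, Theorem~\ref{LSC for Kummers} applied with $k_0 = 3$ immediately yields the corollary.
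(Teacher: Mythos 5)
Your proposal is correct and follows essentially the same route as the paper: both reduce via Theorem~\ref{LSC for Kummers} to the surjectivity of $i^*$ in degrees $\leq 3$, which holds because $H^2(Y,\mathbb{Q})$ and $H^3(Y,\mathbb{Q})$ lie in $H^*(Y,\mathbb{Q})^{\Gamma}$. Your write-up merely makes explicit what the paper leaves implicit — that the image of $i^*$ is the $\Gamma$-invariant subalgebra, that $\Gamma\subset\mathrm{Aut}_0(Y)$ handles degree $2$, and that the identification of $\Gamma$ with the fiber of the subsystem $\mathcal{Z}$ acting trivially on $H^3$ handles degree $3$ — which is a faithful unpacking rather than a different argument.
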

\begin{proof}
By Lemma \ref{deformation invariant}, $\Gamma$ deforms as a local system in $\mathfrak{M}_{w^{\perp}}^0$, and $H^2(Y, \mathbb{Q})$ and $H^3(Y, \mathbb{Q})$ are both contained in $H^*(Y, \mathbb{Q})^{\Gamma}$. Hence, the restriction homomorphism $i^*$ is surjective in these degrees and the result follows from the previous theorem.
\end{proof}

\vspace{5mm}
\subsection{The decomposition theorem and proofs of the main results}\label{The decomposition theorem and proofs of the main results}
 The results of the previous subsection suggest that we must determine the degrees for which the pullback of the inclusion 
 $$
 i^*: H^{*}(\mathcal{M}(v), \mathbb{Q})\longrightarrow H^*(Kum_a(v), \mathbb{Q})
 $$
 is surjective, i.e. for what degrees $i$ do we have $H^i(Kum_{a}(v), \mathbb{Q}) = H^i(Kum_a(v), \mathbb{Q})^{\Gamma}$. This requires us to describe the cohomology of a generalized Kummer variety in terms of a relative stratification of the symmetric product $A^{(n+1)}$. The cohomological decomposition theorem in the form of \cite[Theorem 7]{GS} and the interaction of the $\Gamma$-action with the relative strata play a crucial role here. Let us set up the situation geometrically:

Let $h:A^{[n+1]}\rightarrow A^{(n+1)}$ denote the Hilbert-Chow morphism, $\sigma: A^{(n+1)}\rightarrow A$ the summation map, and $s_{n+1}: A^{[n+1]}\rightarrow A$ the Albanese morphism (\ref{albanese}). Define \begin{equation}\label{W}W: = \sigma^{-1}(0).\end{equation} We have the following commutative diagram:

\vspace{4mm}
\begin{center}
\begin{tikzcd}
Kum_n(A) \arrow[r, "i"', hook] \arrow[rrd, "0", dashed, bend right] \arrow[rrrr, "b", bend left] & {A^{[n+1]}} \arrow[rr, "h"] \arrow[rd, "s_{n+1}"'] &   & A^{(n+1)} \arrow[ld, "\sigma"] & W \arrow[l, "j", hook'] \arrow[lld, "0", dashed, bend left] \\
                                                                                     &                                                & A &                           &                                                       
\end{tikzcd}
\end{center}
\vspace{4mm}
Note that the singular locus of $W$ is precisely the intersection of $W$ with the singular locus of $A^{(n+1)}$. Note also that $A^{(n+1)}$ is stratified with respect to the set $P(n+1)$ of partitions of $n+1$. We call the stratification of $A^{(n+1)}$ with respect to $P(n+1)$ the \textit{Hilbert-Chow stratification} and we shall denote a stratum of the Hilbert-Chow stratification by $A^{(n+1)}_{\nu}$, for $\nu\in P(n+1)$. The Hilbert-Chow stratification expresses $A^{(n+1)}$ as the the disjoint union of locally closed strata
$$
A^{(n+1)} = \coprod_{\nu\in P(n+1)} A^{(n+1)}_{\nu}.
$$
We denote $W_{\nu}^{(n+1)}: = A^{(n+1)}_{\nu}\cap W$ and $K_{\nu}^{[n+1]}: = b^{-1}(W_{\nu}^{(n+1)})$.

\begin{Rem}
    Observe that $W$ is stratified with respect to the disjoint union of locally closed strata $$W = \coprod_{\nu\in P(n+1)}W^{(n+1)}_{\nu}.$$
    A point $y\in K_{\nu}^{[n+1]}$ in the fiber of the morphism $b_{\nu}$ may be described as $y = (Z_1, ..., Z_r)$, where for $\nu = (n_1, ..., n_r)$, $Z_i\subseteq Kum_n(A)$ is a length-$n_i$ subscheme of $A$ supported set-theoretically on the closed point $y_i\in A$. The points $y_i$ have the property that $\sum_i n_i y_i = 0$ in $A$. Furthermore, there are Cartesian diagrams
\begin{center}
    \begin{tikzcd}
{K_{\nu}^{[n+1]}} \arrow[d, "b_{\nu}"'] \arrow[r, "i_{\nu}"] & Kum_n(A) \arrow[d, "b"] \\
W_{\nu}^{(n+1)} \arrow[r, "j_{\nu}"]                         & W                      
\end{tikzcd}
\end{center}
for each $\nu\in P(n+1)$, where $i_{\nu}$ and $j_{\nu}$ are inclusion maps. Note that the morphism $b_{\nu}: K^{[n+1]}_{\nu}\rightarrow W_{\nu}^{(n+1)}$ has connected fibers and hence induces a bijection between the connected components of $K^{[n+1]}_{\nu}$ and those of $W_{\nu}^{(n+1)}$. 
\end{Rem}

\begin{Ex}\label{smallest partition}
As an example, let us consider $\nu = (n+1)$, the smallest partition of $n+1$. The stratum $A^{(n+1)}_{(n+1)}$ is isomorphic to $A$. The corresponding stratum $$W_{(n+1)}^{(n+1)} = W\cap A^{(n+1)}_{(n+1)}$$ consists of the $(n+1)^4$ points which in turn correspond to the torsion points of $A$ of order dividing $n+1$. The fiber $$K^{[n+1]}_{(n+1)} = b^{-1}(W_{(n+1)}^{(n+1)})$$ consists of Lagrangian subvarieties of $Kum_n(A)$. Since the points of $W_{(n+1)}^{(n+1)}$ are in a connected stratum of $A^{(n+1)}$, these Lagrangian fibers are pairwise homologous in $A^{[n+1]}$. 
\end{Ex}

\begin{Def}\label{partition definition}
Let $\nu\in P(n+1)$ be a partition. We can express $\nu$ as $$n+1 =  \nu_1\cdot 1+ \nu_2\cdot 2+ \cdots +\nu_r\cdot r,$$ where $\nu_i$ denotes the number of times $i$ appears in the partition. Define
$$
A^{(\nu)}: = A^{(\nu_1)}\times \cdots \times A^{(\nu_r)}, 
$$
where $A^{(\nu_i)}$ denotes the $\nu_i$-th symmetric product of $A$. 
\end{Def}

Consider the collection of maps of sets
$$
g: A\longrightarrow \mathbb{Z}_{\geq 0}
$$
defined on closed points of $A$. We may choose to define the symmetric product of $n+1$ copies of $A$ as the following subset of this collection: 
$$
A^{(n+1)}: = \{g: A\rightarrow \mathbb{Z}_{\geq 0}: \sum_a g(a) = n+1\}.
$$
We may read off a parition $\nu$ given by $n+1 = \nu_1\cdot 1+ \nu_2\cdot 2+ \cdots + \nu_r\cdot r$ in terms of the maps $g$ in the collection $A^{(n+1)}$: For each $k$, $\nu_k$ corresponds to the number of closed points $a\in A$ such that $g(a) = k$. We may further identify 
$$
A^{(n+1)}_{\nu} = \{g\in A^{(n+1)}: |g^{-1}(i)| = \nu_i, \forall i\},
$$
where $|g^{-1}(i)|$ denotes the cardinality of the fiber. Thus, we can recognize the stratum $A^{(n+1)}_{\nu}$ to which a particular $g$ belongs. For $A^{(\nu)}$ in Definition \ref{partition definition}, there is an embedding $$p_{\nu}: A^{(\nu)}\hookrightarrow \overline{A^{(n+1)}_{\nu}}$$ which is given by $(g_1, ..., g_r)\mapsto \sum_i ig_i$.

Define $W_{\nu}\subset A^{(\nu)}$ by
\begin{equation}\label{W_nu}
W_{\nu}: = \{(g_1, ..., g_r)\in A^{(\nu)}: \sum_{i, a} ig_i(a)\cdot a = 0\}. 
\end{equation}
There exists an associated embedding $q_{\nu}: W_{\nu}\hookrightarrow \overline{W}^{(n+1)}_{\nu}$, again defined by $(g_1, ..., g_r)\mapsto \sum_i ig_i$, in analogy to the embedding $p_{\nu}$. The map $q_{\nu}$ relates the strata $W_{\nu}$ to the strata $W^{(n+1)}_{\nu}$ in the Hilbert-Chow decomposition. Indeed, there are Cartesian diagrams 
\begin{center}
    \begin{tikzcd}
W_{\nu} \arrow[r] \arrow[d, "q_{\nu}"] & A^{(\nu)} \arrow[d, "p_{\nu}"] \\
\overline{W_{\nu}^{(n+1)}} \arrow[r]   & \overline{A_{\nu}^{(n+1)}}  
\end{tikzcd}
\end{center}
\begin{Def}
Expressing a partition $\nu$ in terms of $n+1 = \nu_1\cdot 1 + \cdots + \nu_r\cdot r$, define 
\begin{equation}|\nu| : = \sum_i \nu_i.
\end{equation}
\end{Def}

\begin{Def}\label{definition of gcd}
 For a partition $\nu$ expressed as $n+1 = \nu_1\cdot 1+ \nu_2\cdot 2 + \cdots + \nu_r\cdot r$, define 
 $$d(\nu): = \mathrm{gcd}\{i: \nu_i\neq 0\}.$$ 
 Equivalently, if we express $\nu\in P(n+1)$ as $\nu = (n_1, ..., n_k)\in P(n+1)$, then $d(\nu): = \mathrm{gcd}(n_1, ..., n_k)$.  Let 
 \begin{equation}A[d(\nu)]\end{equation} be the group of $d(\nu)$-torsion points of $A$.
 \end{Def}

The Hilbert-Chow morphism $h: A^{[n+1]}\rightarrow A^{(n+1)}$, and likewise its restriction $b$, are proper morphisms that contract the divisor of non-reduced subschemes of $A^{[n+1]}$. We are interested in studying the cohomology of the generalized Kummer via this contraction. The crucial theorem we utilize for this purpose is \cite[Theorem 7]{GS}, which by means of the cohomological decomposition theorem describes the cohomology of $Kum_n(A)$ in terms of the morphism $b: Kum_n(A)\rightarrow W$ and the Hilbert-Chow stratification of $W$.

 \begin{Th}({\cite[ Theorem 7]{GS}}\label{GS Theorem 7})
There is a canonical isomorphism of Hodge structures 
\begin{equation}\label{GS}
H^{i+2n+2}(A\times Kum_n(A), \mathbb{Q})(n+1) \cong \bigoplus_{\nu\in P(n+1)}\bigoplus_{t \in A[d(\nu)]} H^{i+2|\nu|}(A^{(\nu)}, \mathbb{Q})(|\nu|).
\end{equation}
\end{Th}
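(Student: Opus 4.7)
The strategy is to apply the cohomological decomposition theorem to a base change of the Hilbert--Chow morphism $h: A^{[n+1]} \to A^{(n+1)}$, and to combine the result with a connected-component analysis of the strata $W_\nu^{(n+1)}$. The extra factor of $A$ on the left-hand side is accounted for by the isotriviality of the Albanese fibration $s_{n+1}$: base-changing along $(n+1):A\to A$ trivializes $s_{n+1}$ to the projection $A\times Kum_n(A)\to A$, exactly as recorded in the Cartesian diagram of Section~\ref{IHSMs}.

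First I would construct the semismall proper morphism $\tilde h: A\times Kum_n(A) \longrightarrow A\times W$ obtained from this base change, identifying $A^{(n+1)}\times_{A,\,n+1}A \cong A\times W$ via translation. Since $h$ is known to be semismall and semismallness is preserved under \'{e}tale base change, the decomposition theorem applies to $\tilde h$ with no perverse shift, producing
\[
R\tilde h_*\mathbb{Q}_{A\times Kum_n(A)}[2n+2] \;\cong\; \bigoplus_{\nu\in P(n+1)} \mathrm{IC}\bigl(\overline{A\times W_\nu^{(n+1)}},\,\mathcal{L}_\nu\bigr).
\]
Here $\mathcal{L}_\nu$ is the top-cohomology local system of a generic fiber of $\tilde h$ over the stratum $A\times W_\nu^{(n+1)}$. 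A generic fiber is a product of punctual Hilbert schemes $\mathrm{Hilb}^{k}(\mathbb{A}^2)_0$, each of whose top rational cohomology is one-dimensional, and the symmetric-group action permuting equal-size clusters acts trivially on this top cohomology; hence $\mathcal{L}_\nu$ is the constant sheaf on each connected component of $A\times W_\nu^{(n+1)}$.

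Next I would count the connected components. The weighted summation $A^{(\nu)} \to A$, $(g_1,\ldots,g_r)\mapsto \sum_{i,a}i\, g_i(a)\cdot a$, factors as a product of individual symmetric-power summations $\prod_i \sigma_i$ (with connected fibers) followed by a surjective homomorphism of abelian varieties $A^r \to A$ with weight vector $(i)_{\nu_i\neq 0}$. Smith normal form applied to this row vector exhibits the kernel of the second map as an extension of a connected abelian variety by $A[d(\nu)]$, so it has exactly $|A[d(\nu)]|$ connected components; consequently $\pi_0(W_\nu^{(n+1)}) \cong A[d(\nu)]$. Each connected component contributes a separate IC summand to the decomposition, indexed by $t\in A[d(\nu)]$.

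Finally, on each connected component, the $A$-factor in $A\times W_\nu^{(n+1)}$ combines with the translation trivialization of the corresponding summation fibration to yield a finite \'{e}tale map to $A^{(\nu)}$ that is an isomorphism on rational cohomology. Taking hypercohomology, the corresponding IC summand contributes $H^{*+2|\nu|}(A^{(\nu)},\mathbb{Q})(|\nu|)$ to the left-hand side: the shift $2|\nu|$ records the codimension of the stratum in $A\times W$, the Tate twist $(|\nu|)$ matches the middle-perverse normalization, and the global twist $(n+1)$ on the left comes from $\dim_{\mathbb{C}}(A\times Kum_n(A)) = 2(n+1)$, arranging for both sides to be pure of weight $i$. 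Summing over $\nu\in P(n+1)$ and $t\in A[d(\nu)]$ then yields the stated isomorphism. The main obstacle will be the verification that each connected component indeed contributes the full $H^*(A^{(\nu)},\mathbb{Q})$, rather than merely the cohomology of an individual summation fiber; this reduces to a K\"{u}nneth-type statement for the trivialized torus bundle produced by the $(n+1):A\to A$ base change, in which the extra $A$-factor precisely absorbs the base of the weighted-sum fibration $A^{(\nu)}\to A$.
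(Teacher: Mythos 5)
Your plan follows essentially the same route as the source proof of G\"{o}ttsche--Soergel, which the paper itself reproduces step by step in Proposition \ref{Gamma equivariance of isomorphism} and Lemma \ref{GS step 7}: after the identification $A^{(n+1)}\times_{A,\,n+1}A\cong A\times W$, your $\tilde h$ is exactly $1_A\times b$, and the decomposition theorem for this semismall map gives $(1_A\times b)_*IC(A\times Kum_n(A))\cong\bigoplus_\nu IC(A\times\overline{W_\nu^{(n+1)}})$ with constant coefficients (your punctual-Hilbert-scheme argument for the triviality of $\mathcal{L}_\nu$ is the standard one); the small maps $1_A\times q_\nu$ then replace each summand by $q_{\nu,*}\underline{A\times W_\nu}[2|\nu|]$, and your Smith-normal-form count $\pi_0(W_\nu^{(n+1)})\cong A[d(\nu)]$ is sound and matches the paper's stratification $W_\nu=\coprod_{t\in A[d(\nu)]}W_\nu^t$. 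One bookkeeping correction: $2|\nu|$ is the complex \emph{dimension} of the stratum $A\times W_\nu^{(n+1)}$, not its codimension in $A\times W$; the codimension is $2(n+1-|\nu|)$, twice the fiber dimension $\dim\prod_i \mathrm{Hilb}^{n_i}(\mathbb{C}^2)_0=n+1-|\nu|$, which is precisely the semismallness computation.

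The obstacle you flag at the end is the genuine crux, and a ``K\"{u}nneth-type statement'' is not the mechanism that closes it. For each $t$, the action map $A\times W_\nu^t\to A^{(\nu)}$ is finite \'{e}tale with deck group $A[m]$, $m=(n+1)/d(\nu)$, acting anti-diagonally, so a priori you only get $H^*(A^{(\nu)},\mathbb{Q})$ as the \emph{invariant} subspace of $H^*(A\times W_\nu^t,\mathbb{Q})$ --- exactly the shortfall you anticipate. What is needed, and what the paper's Lemma \ref{GS step 7} supplies, is that this finite deck action is trivial on rational cohomology. The argument is a translation-isotopy argument, not K\"{u}nneth: first reduce to $d(\nu)=1$ via the isomorphisms $A\times W_{\nu/d(\nu)}\xrightarrow{\sim}A\times W_\nu^t$, $(a,g)\mapsto(a-z,j_z(g))$, for any $z$ with $((n+1)/d(\nu))z=t$ (independent of $z$ on cohomology, since two choices differ by $A[(n+1)/d(\nu)]$); then, when $d(\nu)=1$, pass to the ordered Galois cover $\tilde{A}^\nu$ with group $\mathcal{S}_\nu$, use a relation $1=\sum_i iz_i$ to split the weighted-sum homomorphism $s:\tilde{A}^\nu\to A$, so $\tilde{A}^\nu\cong A\times\tilde{W}_\nu$, and observe that the deck transformations lift to translations $\gamma\mapsto(-\gamma,\gamma,\dots,\gamma)$ by elements of the connected group $A\times\tilde{W}_\nu$ acting on itself; translations are isotopic to the identity, hence trivial on $H^*$, and taking $\mathcal{S}_\nu$-invariants descends the conclusion. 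With this step inserted, your outline becomes a complete proof along the same lines as the paper's.
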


\noindent The proof of the above isomorphism relies on considering the intersection cohomology of the smooth, $(2n+2)$-dimensional complex variety $A\times Kum_n(A)$ and the intersection cohomology of the strata $A\times \overline{W_{\nu}^{(n+1)}}$ relative to the morphism $1_{A}\times b$. Precisely, the isomorphism (\ref{GS}) is induced by an isomorphism
$$
(1_A\times b)_*IC(A\times Kum_n(A))\cong \bigoplus_{\nu\in P(n+1)} IC(A\times \overline{W_{\nu}^{(n+1)})}.
$$

\begin{Rem}
    As a vector space, $H^{i+2n+2}(A\times Kum_n(A), \mathbb{Q})(n+1)$ is the same as $H^{i+2n+2}(A\times Kum_n(A), \mathbb{Q})$. The twist $(n+1)$ is present to modify the Hodge type by $-2(n+1)$. The same goes for the right-hand side of the isomorphism. Although for the remainder of this article we will not take the Hodge structure into consideration, we will retain both the notation and the statement of the theorem in the same way as it appears in \cite{GS}.
\end{Rem}

We study next how the $\Gamma$-action on $A^{(\nu)}$ yields a permutation action on the right-hand side of (\ref{GS}); the next several definitions and results build to Proposition \ref{Gamma equivariance of isomorphism}, which gives us an equivariant version of Theorem \ref{GS Theorem 7}.

 \begin{Rem}
 For each permutation $\nu\in P(n+1)$, the variety $W_{\nu}$ admits a stratification
$$
W_{\nu} = \coprod_{t\in A[d(\nu)]} W_{\nu}^{t},
$$
where 
$$
W_{\nu}^{t}: = \{(g_1, ..., g_r)\in A^{(\nu)}: \sum_{i, a}(i/d(\nu))g_i(a)\cdot a = t\}.
$$
\end{Rem}

\begin{Prop}\label{transitive gamma action}
 The group $A[d(\nu)] = m\Gamma\subset A$ acts transitively on the components $W^{t}_{\nu}$, for $t\in A[d(\nu)]$.
\end{Prop}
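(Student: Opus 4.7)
The plan is to read off the translation action of $\Gamma = A[n+1]$ on $A^{(\nu)}$ in the coordinates defining the stratification $W_\nu = \coprod_{t\in A[d(\nu)]} W_\nu^t$, and then verify that the induced permutation action on the finite index set $A[d(\nu)]$ is transitive. First I would check that $\Gamma$ preserves $W_\nu$: for $a \in \Gamma$, translation sends a tuple $(g_1,\ldots,g_r)$ to the tuple whose $i$-th component is $g_i(\,\cdot - a)$, and a change of variables in the defining sum yields
\[
\sum_{i,x} i\, g_i(x-a)\cdot x \;=\; \sum_{i,y} i\, g_i(y)\cdot(y+a) \;=\; \sum_{i,y} i\, g_i(y)\cdot y \;+\; (n+1)\,a,
\]
which equals the original sum since $a \in A[n+1]$ implies $(n+1)a = 0$.

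Next I would apply the same change of variables to the stratum-indicating sum $\sum_{i,x}(i/d(\nu))\, g_i(x)\cdot x$. Setting $m := (n+1)/d(\nu)$, this simply shifts the value by $((n+1)/d(\nu))\,a = m\,a$. Consequently, translation by $a \in \Gamma$ sends $W_\nu^t$ to $W_\nu^{t+ma}$, so the $\Gamma$-action on the index set $A[d(\nu)]$ is the translation action through the homomorphism $\Gamma \to A[d(\nu)]$, $a\mapsto ma$.

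To conclude, I would identify this image inside $A$. Since $A$ is divisible and $\Gamma = A[n+1] \cong (\mathbb{Z}/(n+1))^4$, multiplication by $m$ on $\Gamma$ has kernel $A[m]\subset \Gamma$ (using $m\mid n+1$) and therefore image of cardinality $((n+1)/m)^4 = d(\nu)^4$; combined with the evident inclusion $m\Gamma \subset A[d(\nu)]$, this gives $m\Gamma = A[d(\nu)]$. Hence the $\Gamma$-action on the index set factors through the isomorphism $\Gamma/A[m] \xrightarrow{\sim} m\Gamma = A[d(\nu)]$ and coincides with the regular translation action of $A[d(\nu)]$ on itself, which is transitive.

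The whole argument is essentially a change-of-variables computation, so there is no serious obstacle; the only point deserving explicit verification is the identification $m\Gamma = A[d(\nu)]$ inside $A$, which is elementary torsion-group arithmetic on the complex abelian surface $A$.
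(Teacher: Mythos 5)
Your proposal is correct and follows essentially the same route as the paper: the same change of variables shows that translation by $\gamma\in\Gamma$ shifts the stratum label by $m\gamma$, so the action on the components is the translation action of $m\Gamma$ on $A[d(\nu)]$, which is transitive. The only addition is your explicit verification that $m\Gamma = A[d(\nu)]$ (which also follows directly, since for $b\in A[d(\nu)]$ any $a$ with $ma=b$ satisfies $(n+1)a = d(\nu)b = 0$); the paper asserts this identification in the statement and records it via the commutative diagram rather than proving it.
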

\begin{proof}

There is a commutative diagram
\begin{center}
    \begin{tikzcd}
\Gamma \arrow[d, hook] \arrow[r, "{m}"] & {A[d(\nu)]}  \arrow[d, hook]\\
A \arrow[r]                      & A                         
\end{tikzcd}
\end{center}
where $m$ denotes multiplication by $m$.
The action of $A$ on symmetric powers of $A$ induces the diagonal action of $\gamma\in \Gamma$ on $W_{\nu}^t$ defined by 
$$
\gamma(g_1, ..., g_r) = (\gamma g_1, ..., \gamma g_r)
$$
where for a closed point $a\in A$, $\gamma g_i(a) = g_i(a-\gamma)$. Such an action induces an action on the sum  
$$
\sum_{i, a}(i/d(\nu))g_i(a)\mapsto \sum_{i, a}(i/d(\nu))g_i(a-\gamma)
.$$
The translation action of $\Gamma$ on $A$ is transitive, therefore we can make the substitution $a^{\prime} = a-\gamma$. Now we have 
$$
\sum_{i, a}(i/d(\nu))g_i(a)\cdot a = t,
$$
while
$$
\sum_{i, a}(i/d(\nu))g_i(a-\gamma)\cdot a = \sum_{i, a}(i/d(\nu))g_i(a^{\prime})\cdot (a^{\prime}+\gamma) = t+ m\gamma.
$$
Therefore, the $\Gamma$-action on $W_{\nu}$ factors through an action of $m\Gamma$, and the action of $m\Gamma$ on the connected components $W_{\nu}^{t}$ sending $W_{\nu}^{t}\mapsto W_{\nu}^{t+m\gamma}$ can be thought of as a $\Gamma$-action on its $\Gamma/m\Gamma$-cosets and hence is transitive. 
\end{proof}

 We next relate the cohomology of the Hilbert-Chow stratification for partitions of $n+1$ to the cohomology of $A^{(\nu)}$.

\begin{Lem}\label{GS step 7}
    There exist canonical isomorphisms 
    $$
     H^i(A\times W_{\nu}^{t}, \mathbb{Q})\xrightarrow[]{\sim} H^i(A^{(\nu)}, \mathbb{Q})
    $$
    for each $\nu\in P(n+1)$ and each $t\in A[d(\nu)]$, induced by isomorphisms
    $$
A\times W_{\nu/d(\nu)}\xrightarrow{\sim} A\times W_{\nu}^{t}.
$$ Furthermore, the pullback action of $A[m]$ on $H^i(A\times W_{\nu}^t, \mathbb{Q})$ is trivial.
\end{Lem}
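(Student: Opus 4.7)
The plan relies on the summation morphism $\sigma_{\nu/d(\nu)}: A^{(\nu)} \to A$, $(g_1, \ldots, g_r) \mapsto \sum_{i,a}(i/d(\nu)) g_i(a) \cdot a$, which satisfies $\sigma_{\nu/d(\nu)} \circ T_\alpha = T_{m\alpha} \circ \sigma_{\nu/d(\nu)}$ for $m := (n+1)/d(\nu)$. Since the isogeny $m: A \to A$ is surjective, for each $t \in A[d(\nu)]$ I can pick $\gamma_t \in A$ with $m\gamma_t = t$; translation by $\gamma_t$ then carries $W_{\nu/d(\nu)} = \sigma_{\nu/d(\nu)}^{-1}(0)$ isomorphically onto $W_\nu^t = \sigma_{\nu/d(\nu)}^{-1}(t)$, providing the first asserted isomorphism $(a, x) \mapsto (a, T_{\gamma_t}(x))$. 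The map $\mu: A \times W_{\nu/d(\nu)} \to A^{(\nu)}$ defined by $\mu(\gamma, x) := T_\gamma(x)$ is then precisely the base change of $m: A \to A$ along $\sigma_{\nu/d(\nu)}$, hence a finite \'etale Galois cover with deck group $A[m]$.

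The heart of the argument is the triviality of the Galois $A[m]$-action on $H^*(A \times W_{\nu/d(\nu)}, \mathbb{Q})$, which simultaneously yields the cohomological isomorphism and the ``furthermore'' clause. I will lift $W_{\nu/d(\nu)}$ to an abelian variety: the symmetric group quotient $\prod_i A^{\nu_i} \to \prod_i A^{(\nu_i)} = A^{(\nu)}$ pulls $W_{\nu/d(\nu)}$ back to the subvariety $\tilde W \subset \prod_i A^{\nu_i}$ cut out by $\sum_{i, j} (i/d(\nu)) x_{i, j} = 0$. Because the coefficients $\{i/d(\nu): \nu_i \neq 0\}$ have greatest common divisor $1$ by the definition of $d(\nu)$, the associated homomorphism $\prod_i A^{\nu_i} \to A$ is surjective on fundamental groups and its kernel is connected, so $\tilde W$ is itself an abelian subvariety. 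Diagonal translation by $\alpha \in A[m]$ is a translation automorphism of $\tilde W$ and thus acts trivially on $H^*(\tilde W, \mathbb{Q})$; this action commutes with $G := \prod_i S_{\nu_i}$, so it descends trivially to $H^*(W_{\nu/d(\nu)}, \mathbb{Q}) = H^*(\tilde W, \mathbb{Q})^G$. Combined with K\"unneth and the triviality of translation on $H^*(A, \mathbb{Q})$, the Galois $A[m]$-action on $H^*(A \times W_{\nu/d(\nu)}, \mathbb{Q})$ is trivial, so $\mu^*$ produces the isomorphism $H^*(A^{(\nu)}, \mathbb{Q}) \xrightarrow{\sim} H^*(A \times W_{\nu/d(\nu)}, \mathbb{Q})$; transporting along the first isomorphism yields the asserted cohomological isomorphism, and the ``furthermore'' clause follows from the same triviality.

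The principal technical subtlety will be verifying that $\tilde W$ is connected, so that it is genuinely an abelian variety rather than a disjoint union of translates of an abelian subvariety. This reduces to surjectivity of the map $\prod_i A^{\nu_i} \to A$ on fundamental groups, which is precisely the gcd-$1$ condition on the coefficients $\{i/d(\nu): \nu_i \neq 0\}$ that is built into the definition of $d(\nu)$.
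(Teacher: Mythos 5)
Your proposal is correct and takes essentially the same route as the paper's proof: the paper likewise realizes $A\times W_{\nu/d(\nu)}\to A^{(\nu)}$ as the quotient by the anti-diagonal $A[m]$-translation action, proves triviality of that action on rational cohomology by lifting to the Galois cover $\prod_i A^{\nu_i}$ (where the gcd-$1$ condition gives a splitting of the summation map, i.e.\ your connectedness of $\tilde W$, so the torsion translation is a genuine translation of an abelian variety, isotopic to the identity) and descending through symmetric-group invariants, and it constructs the same translation isomorphisms $W_{\nu/d(\nu)}\xrightarrow{\sim}W_{\nu}^{t}$ with independence of the choice of $\gamma_t$ (the paper's $z$) guaranteed by that same triviality. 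The differences are only organizational: the paper reduces the $d(\nu)>1$ case to the $d(\nu)=1$ case via the explicit map $(a,g)\mapsto(a-z,j_z(g))$, whereas you work uniformly with $\sigma_{\nu/d(\nu)}$ and package the cover as the base change of the isogeny $m\colon A\to A$.
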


\begin{proof}
    The proof appears in arguments outlined in \cite[pp. 242-243]{GS}. We reproduce them here for the convenience of the reader. 
    
    Consider first the case where $d(\nu) = 1$. There is an action of $A$ on symmetric powers of $A$ given by $zg(a) = g(a-z)$. For $\gamma\in \Gamma$ and $a\in A$ there is a corresponding diagonal action of $\Gamma$ on closed points of $A\times W_{\nu}$ given by 
\begin{equation}\label{anti-diagonal action}
\gamma(a, g_1,\dots , g_r) = (a-\gamma, \gamma g_1, ..., \gamma g_r)
\end{equation}
for a subscheme $(g_1, \dots, g_r)\in W_{\nu}$. The map $A\times W_{\nu}\rightarrow A^{(\nu)}$ is the quotient by this action. It suffices to prove that the diagonal action of $A[m]$ is trivial on the cohomology of $A\times W_{\nu}$ in this case. Let $\Tilde{A}^{\nu}$ denote the Galois cover of $A^{(\nu)}$. $\Tilde{A}^{\nu} = A^{\nu_1}\times \cdots\times  A^{\nu_r}$ and has associated Galois group $\mathcal{S}_{\nu} = \mathcal{S}_{\nu_1}\times \cdots \times \mathcal{S}_{\nu_r}$. Let 
$$
s: \Tilde{A}^{\nu}\longrightarrow A
$$
be defined on closed points by 
$$
((y_j^1)_{j = 1}^{\nu_1}, \dots , (y_j^r)_{j =1}^{\nu_r})\mapsto \sum_{i, j} iy_j^i
$$
and define $\tilde{W}_{\nu}: = \mathrm{ker}(s)$. Now the assumption that $d(\nu) = 1$ implies that we may write $1  = \sum iz_i$ for $z_i\in \mathbb{Z}$. There is therefore a splitting of $s$ and we may conclude that $\Tilde{A}^{\nu} \cong \tilde{W}_{\nu}\times A$. The diagonal action of $\Gamma$ on $A\times W_{\nu}$ induces a diagonal action on $A\times \Tilde{W}_{\nu}$ given by 
$$
\gamma(a, (y_j^1)_{j = 1}^{\nu_1}, \dots , (y_j^r)_{j =1}^{\nu_r}) = (a-\gamma, \tau_{\gamma}(y_j^1), \dots ,  \tau_{\gamma}(y_j^r))
$$
where $\tau$ denotes the translation map. The morphism $A\times \Tilde{W}_{\nu}\rightarrow A\times W_{\nu}$ is $\Gamma$-equivariant and is realized by the quotient of the action by $\mathcal{S}_{\nu}$ and there is therefore a $\Gamma$-equivariant isomorphism 
$$
H^{*}(A\times W_{\nu})\cong H^*(A\times \Tilde{W}_{\nu})^{\mathcal{S}_{\nu}}.
$$
$A\times \Tilde{W}_{\nu}$ acts on itself and there is an embedding of the group $\Gamma$ into $A\times \Tilde{W}_{\nu}$ given by 
$$
\gamma\mapsto (-\gamma, (\gamma)_{1}^{\nu_1}, \dots , (\gamma)_1^{\nu_r}). 
$$
The action of $\Gamma$ is isotopic to the identity and hence is the identity on $H^*(A\times \Tilde{W}_{\nu})$.

    Assume now that $d(\nu)>1$. For a partition $\nu$ defined by $$n+1 = \nu_1\cdot 1+ \nu_2\cdot 2+ \cdots + \nu_r\cdot r,$$ denote the partition $\nu/d(\nu)\in P(\frac{n+1}{d(\nu)})$ by
$$
\frac{n+1}{d(\nu)} = \frac{1}{d(\nu)}\cdot \nu_1+ \frac{2}{d(\nu)}\cdot \nu_2+ \cdots + \frac{r}{d(\nu)}\cdot \nu_r.
$$
Let $z\in A$ be a point with the property that $((n+1)/d(\nu))z = t$, for $t\in A[d(\nu)]$. We define a morphism 
$$
j_z: W_{\nu/d(\nu)}\longrightarrow W_{\nu}^{t}
$$
where we send a point $(g_1, ..., g_{r/d(\nu)})\mapsto (h_1, ..., h_r)$, for which $h_i = 0$ unless $i$ is a multiple of $d(\nu)$. Furthermore, we set $h_{d(\nu)i}= g_i(a-z)$, for all $i$ and for $a\in A$.

For a point $z$ with the property that $((n+1)/d(\nu))z = t$, for $t\in A[d(\nu)]$, there is an induced isomorphism
$$
A\times W_{\nu/d(\nu)}\longrightarrow A\times W_{\nu}^{t}
$$
given by the anti-diagonal action $(a, g)\mapsto (a-z, j_z(g))$. This isomorphism induces an isomorphism on cohomology
$$
H^*(A\times W_{\nu/d(\nu)}, \mathbb{Q})\xrightarrow{\sim} H^*(A\times W_{\nu}^t, \mathbb{Q})
$$
which is independent of $z$, since any two choices of $z$ differ by an element of $A[(n+1)/d(\nu)]$. We have $A^{(\nu)} = A^{\nu/d(\nu)}$, and the above isomorphism induces the desired isomorphism due to the fact that the group action of $A[m]$ on the cohomology of $A\times W_{\nu/d(\nu)}$ is trivial.
\end{proof}

\begin{Cor}\label{representation theory of the cohomology}
    For a fixed partition $\nu\in P(n+1)$, there is an isomorphism of $\mathbb{Q}[\Gamma]$-modules 
    $$
    \bigoplus_{t\in A[d(\nu)]} H^{i+2|\nu|}(A\times W_{\nu}^t, \mathbb{Q})(|\nu|) \cong H^{i+2|\nu|}(A\times W_{\nu}^0,\mathbb{Q})(|\nu|)\otimes B_{\nu},
    $$
    where $B_{\nu}$ is the pullback via $\Gamma\rightarrow m\Gamma$ of the regular $m\Gamma$-representation.
\end{Cor}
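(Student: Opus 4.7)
The plan is to combine the two previous results: Lemma \ref{GS step 7} provides canonical isomorphisms $H^{i+2|\nu|}(A \times W_\nu^t, \mathbb{Q}) \cong H^{i+2|\nu|}(A \times W_\nu^0, \mathbb{Q})$ for each $t \in A[d(\nu)]$ (both identify naturally with $H^{i+2|\nu|}(A^{(\nu)}, \mathbb{Q})$), and asserts that $A[m] = \ker(\Gamma \twoheadrightarrow m\Gamma)$ acts trivially on each individual summand; Proposition \ref{transitive gamma action} shows that $m\Gamma$ permutes the components $\{W_\nu^t\}_{t \in A[d(\nu)]}$ simply transitively, via $m\gamma \cdot W_\nu^t = W_\nu^{t+m\gamma}$. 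Together these should identify the left-hand side as a $\Gamma$-module with the regular representation of $m\Gamma$, pulled back to $\Gamma$, tensored with a trivial factor.

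Concretely, for each $t \in A[d(\nu)]$ let $\phi_t \colon H^{i+2|\nu|}(A \times W_\nu^0, \mathbb{Q}) \to H^{i+2|\nu|}(A \times W_\nu^t, \mathbb{Q})$ be the canonical isomorphism of Lemma \ref{GS step 7}, and define
$$
\Phi \colon H^{i+2|\nu|}(A \times W_\nu^0, \mathbb{Q}) \otimes B_\nu \longrightarrow \bigoplus_{t \in A[d(\nu)]} H^{i+2|\nu|}(A \times W_\nu^t, \mathbb{Q})
$$
by $v \otimes \delta_t \mapsto \phi_t(v)$ in the $t$-th summand, where $\{\delta_t\}_{t \in A[d(\nu)]}$ is the standard basis of $B_\nu = \mathbb{Q}[m\Gamma]$. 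This is a vector-space isomorphism by construction. Since $A[m]$ acts trivially on each summand, the $\Gamma$-action on the right-hand side factors through $m\Gamma$, and I would check $\Gamma$-equivariance of $\Phi$ by reducing it to the single compatibility that the identifications $\phi_t$ intertwine the component-permuting action of $m\Gamma$ on the target with the regular action on the indexing set of $B_\nu$.

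The only point requiring verification is this compatibility, and it follows directly from the construction of $\phi_t$ in Lemma \ref{GS step 7}: each $\phi_t$ is induced by the anti-diagonal translation $(a, g) \mapsto (a - z_t, j_{z_t}(g))$ with $((n+1)/d(\nu)) z_t = t$, while the $\Gamma$-action on $A \times W_\nu$ is itself by anti-diagonal translation. The composite of translation by $\gamma \in \Gamma$ with $\phi_{t + m\gamma}$ differs from $\phi_t$ only by translation by an element of $A[m]$, which acts trivially on cohomology by Lemma \ref{GS step 7}; this is the main (but essentially bookkeeping) obstacle. Pulling the regular $m\Gamma$-representation back along $\Gamma \twoheadrightarrow m\Gamma$ then yields the claimed $\mathbb{Q}[\Gamma]$-module structure, completing the identification.
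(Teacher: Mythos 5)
Your proposal is correct and follows essentially the same route as the paper's proof, which likewise combines the transitive permutation action of $m\Gamma$ on the components $W_\nu^t$ (Proposition \ref{transitive gamma action}) with the triviality of the $A[m]$-action on each summand (Lemma \ref{GS step 7}). The paper simply states these two facts and concludes; your explicit map $\Phi$ and the bookkeeping check that $\gamma \circ \phi_t$ and $\phi_{t+m\gamma}$ differ by an $A[(n+1)/d(\nu)]$-translation correctly fill in the details the paper leaves implicit.
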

\begin{proof}
    The the action of $\Gamma$ on $W_{\nu}$ factors through an action of $m\Gamma$, which acts transitively via the diagonal action on the components $W^t_{\nu}$ by the previous proposition. The pullback action on cohomology is therefore a transitive permutation action of $m\Gamma$ on the vector spaces $H^{i+2|\nu|}(A\times W^{t}_{\nu}, \mathbb{Q})(|\nu|)$.  
\end{proof}

The next proposition strengthens the result of Theorem \ref{GS Theorem 7}:

\begin{Prop}\label{Gamma equivariance of isomorphism}
There is an isomorphism of $\mathbb{Q}[\Gamma]$-modules \begin{equation}\label{GS equivariant}
H^{i+2n+2}(A\times Kum_n(A), \mathbb{Q})(n+1) \cong \bigoplus_{\nu\in P(n+1)} [H^{i+2|\nu|}(A^{(\nu)}, \mathbb{Q})(|\nu|)\otimes B_{\nu}],
\end{equation}
 where $B_{\nu}$ is the pullback via the map $\Gamma\rightarrow m\Gamma$ of the $m\Gamma$-regular representation.
\end{Prop}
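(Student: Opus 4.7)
The plan is to refine the isomorphism in Theorem \ref{GS Theorem 7} by tracking the $\Gamma$-action through its construction. First I would observe that the anti-diagonal $\Gamma$-action on $A\times Kum_n(A)$ (given by (\ref{anti-diagonal action}) via the isomorphism with $A\times_A A^{[n+1]}$) is compatible with the proper morphism $1_A\times b \colon A\times Kum_n(A)\rightarrow A\times W$: the induced $\Gamma$-action on $A\times W$ preserves the Hilbert-Chow stratification, since the stratification is defined intrinsically by the partition structure of the support. Consequently each closed stratum $A\times \overline{W_{\nu}^{(n+1)}}$ is $\Gamma$-invariant.

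Next I would argue that the isomorphism of perverse sheaves
$$
(1_A\times b)_*IC(A\times Kum_n(A))\cong \bigoplus_{\nu\in P(n+1)} IC(A\times \overline{W_{\nu}^{(n+1)}})
$$
underlying Theorem \ref{GS Theorem 7} can be taken to be $\Gamma$-equivariant. Because $\Gamma$ acts by automorphisms that preserve each stratum, it preserves each summand on the right-hand side; in particular $\Gamma$ does not permute the summands indexed by $\nu$. Thus, after passing to hypercohomology and taking the appropriate degree shift, one obtains a $\Gamma$-equivariant decomposition
$$
H^{i+2n+2}(A\times Kum_n(A),\mathbb{Q})(n+1) \cong \bigoplus_{\nu\in P(n+1)} \mathbb{H}^{i+2|\nu|}(A\times \overline{W_{\nu}^{(n+1)}}, IC)(|\nu|).
$$

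For each fixed $\nu$, the explicit computation in \cite{GS} identifies the right-hand summand with $\bigoplus_{t\in A[d(\nu)]} H^{i+2|\nu|}(A\times W_{\nu}^{t},\mathbb{Q})(|\nu|)$, the decomposition being indexed by the connected components $W_{\nu}^{t}$ of $W_{\nu}$. This identification is intrinsically $\Gamma$-equivariant. By Proposition \ref{transitive gamma action}, the induced $\Gamma$-action on the set $A[d(\nu)]$ of components factors through the transitive action of $m\Gamma$, and so by Corollary \ref{representation theory of the cohomology} this summand is isomorphic as a $\mathbb{Q}[\Gamma]$-module to $H^{i+2|\nu|}(A\times W_{\nu}^{0},\mathbb{Q})(|\nu|)\otimes B_{\nu}$. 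Finally, Lemma \ref{GS step 7} supplies a $\Gamma$-equivariant isomorphism $H^{i+2|\nu|}(A\times W_{\nu}^{0},\mathbb{Q})(|\nu|)\cong H^{i+2|\nu|}(A^{(\nu)},\mathbb{Q})(|\nu|)$, with the trivial $\Gamma$-action on the target, so assembling the pieces yields the claim.

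The main obstacle will be the equivariance of the decomposition theorem step. The Beilinson-Bernstein-Deligne decomposition is not canonical, only unique up to automorphism of the direct sum, so a priori one must worry whether a $\Gamma$-equivariant splitting exists. The rescue is precisely that $\Gamma$ preserves each stratum and therefore does not mix different $IC$-summands; standard results on equivariant perverse sheaves then allow the splitting to be chosen $\Gamma$-equivariantly. Once this is established, the remaining identifications are direct applications of the preceding lemmas.
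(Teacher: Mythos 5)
Your proposal is correct and takes essentially the same route as the paper's own proof: both track the $\Gamma$-action through the G\"ottsche--Soergel argument step by step, resolve the non-canonicity of the decomposition-theorem splitting via equivariant perverse sheaf theory (the paper cites Bernstein--Lunts, \cite[Theorem 5.3]{BL}, which is exactly the ``standard result'' your last paragraph appeals to), and then conclude with Proposition \ref{transitive gamma action}, Corollary \ref{representation theory of the cohomology}, and Lemma \ref{GS step 7} just as you do. The only difference is one of detail: the paper separately verifies the equivariance of the identification $IC(A\times \overline{W_{\nu}^{(n+1)}})\cong q_{\nu,*}\underline{A\times W_{\nu}}[2|\nu|]$ using the smallness of $q_{\nu}$, a step you compress into the assertion that the component decomposition is ``intrinsically $\Gamma$-equivariant.''
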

The proof requires that we check the equivariance properties of the isomorphism (\ref{GS}) with respect to the $\Gamma$-action on $A^{(\nu)}$ and on $A\times Kum_n(A)$. We check the equivariance in several steps, each of which correspond to a step required in the proof of \cite[Theorem 7]{GS}. Since $A\times Kum_n(A)$ is a smooth complex algebraic variety of dimension $2n+2$, the intersection complex is the constant sheaf of locally constant $\mathbb{Q}$-valued functions
$$
\underline{A\times Kum_n(A)}[2n+2].
$$
We hence have the isomorphism 
$$
 H^{i+2n+2}p_*\underline{A\times Kum_n(A)}(n+1) \cong H^{i}p_*IC(A\times Kum_n(A))(n+1),
$$
where $p$ denotes projection to a point. The isomorphism above respects the $\Gamma$-action; the sheaf $\underline{A\times Kum_n(A)}$ is an object in the category $Perv_{\Gamma}(A\times Kum_n(A))$ of $\Gamma$-equivariant perverse sheaves. Hence, its cohomologies may be realized as $\mathbb{Q}[\Gamma]$-modules.
In the following, the isomorphism of each step implicitly means an isomorphism of $\mathbb{Q}[\Gamma]$-modules:
\begin{itemize}
\item[\textbf{Step 1:}] (cf. \cite[pg. 241 (2-3)]{GS}) There exists an isomorphism
$$
H^{i}p_*IC(A\times Kum_n(A))(n+1) \cong \bigoplus_{\nu\in P(n+1)} H^ip_*IC(A\times \overline{W_{\nu}^{(n+1)}})(|\nu|).
$$
\begin{proof}
The above ismorphism arises by applying the functor $H^ip_*$ to the isomorphism 
\begin{equation}\label{hilbert chow intersection cohomology}
(1_A\times b)_*IC(A\times Kum_n(A)) \cong \bigoplus_{\nu\in P(n+1)}IC(A\times \overline{W_{\nu}^{(n+1)}}).
\end{equation}
It suffices to observe the $\Gamma$-equivariance of the above isomorphism, i.e. an isomorphism of $\Gamma$-equivariant objects, which we have by \cite[Theorem 5.3, pg. 42]{BL}. To briefly elaborate: The morphism $$1_A\times b: A\times Kum_n(A)\rightarrow A\times W$$ is a proper, $\Gamma$-equivariant morphism. For $\mathcal{F}\in D^b_{\Gamma}(A\times Kum_n(A))$ a semi-simple object, the pushforward $(1_A\times b)_*(\mathcal{F})$ is a semi-simple object in $D^b_{\Gamma}(A\times W)$. Here, $D^b_{\Gamma}(\cdot)$ denotes the equivariant derived category. The equivariant version of the equality (\ref{hilbert chow intersection cohomology}) follows, as the objects of $IC_{\Gamma}(\cdot)$ are precisely the simple objects of $Perv_{\Gamma}(\cdot)$. Hence, there is an equivariant isomorphism 
$$
(1_A\times b)_*IC_{\Gamma}(A\times Kum_n(A)) \cong \bigoplus_{\nu\in P(n+1)}IC_{\Gamma}(A\times \overline{W_{\nu}^{(n+1)}}).
$$
\end{proof}

\item[\textbf{Step 2:}] (cf. \cite[pg. 241 (3-4)]{GS}) There exists an isomorphism
$$
\bigoplus_{\nu\in P(n+1)}  H^{i}p_{*}IC(A\times \overline{W^{(n+1)}_{\nu}})(|\nu|)  \cong \bigoplus_{\nu\in P(n+1)}  H^{i}p_{*}q_{\nu, *}\underline{A\times W_{\nu}}[2|\nu|](|\nu|).
$$
\begin{proof}
    The morphism $1_{A}\times q_{\nu}: A\times W_{\nu}\hookrightarrow A\times \overline{W^{(n+1)}_{\nu}}$ is obviously $\Gamma$-equivariant with respect to the $\Gamma$-action on the symmetric product, so as a result it suffices to observe the $\Gamma$-equivariance of the isomorphism
    $$
    IC(A\times \overline{W_{\nu}^{(n+1)}}) \cong q_{\nu, *}\underline{A\times W_{\nu}}[2|\nu|].
    $$
    The morphism $1_A\times q_{\nu}$ is finite, birational, and proper-- in fact an open embedding. In particular, it is small, and so we may apply \cite[Theorem 5.3, pg. 42]{BL}. 
\end{proof}

     \item[\textbf{Step 3:}] (cf. \cite[pg. 241 (4-5)]{GS}) There exists an equality
$$
\bigoplus_{\nu\in P(n+1)}  H^{i}p_{*}q_{\nu, *}\underline{A\times W_{\nu}}[2|\nu|](|\nu|) = \bigoplus_{\nu\in P(n+1)}  H^{i+ 2|\nu|}(A\times W_{\nu}, \mathbb{Q})(|\nu|).
$$ 
\begin{proof}
 The equality holds by definition. 
\end{proof}

 \item[\textbf{Step 4:}] (cf. \cite[pg. 241 (5-6)]{GS}). There is an isomorphism

     $$\bigoplus_{\nu\in P(n+1)}  H^{i+ 2|\nu|}(A\times W_{\nu}, \mathbb{Q})(|\nu|) \cong \bigoplus_{\nu\in P(n+1)} \bigoplus_{t\in A[d(\nu)]} H^{i+ 2|\nu|}(A\times W^t_{\nu}, \mathbb{Q})(|\nu|). $$
    \begin{proof}
        The above isomorphism exists as a result of the disjoint union 
        $$
        W_{\nu} = \coprod_{t\in A[d(\nu)]} W_{\nu}^t,
        $$ for each $\nu\in P(n+1)$.
        The right-hand side is $\mathbb{Q}[\Gamma]$-isomorphic to 
        $$
         \bigoplus_{\nu\in P(n+1)} [H^{i+ 2|\nu|}(A\times W^0_{\nu}, \mathbb{Q})(|\nu|)\otimes B_{\nu}],
        $$
        by Corollary \ref{representation theory of the cohomology}. The action of $\Gamma$ on $W_{\nu}$ factors through an action of $m\Gamma$, and so we get an isomorphism of $\mathbb{Q}[\Gamma]$-modules.
    \end{proof}

        \item[\textbf{\textbf{Step 5:}}] (cf. \cite[pg. 241 (6-7)]{GS}). There is an isomorphism
    $$
    \bigoplus_{\nu\in P(n+1)} \bigoplus_{t\in A[d(\nu)]} H^{i+ 2|\nu|}(A\times W^t_{\nu}, \mathbb{Q})(|\nu|) \cong \bigoplus_{\nu\in P(n+1)} \bigoplus_{t\in A[d(\nu)]} H^{i+ 2|\nu|}(A^{(\nu)}, \mathbb{Q})(|\nu|).
    $$
    \begin{proof}
    By the proof of Lemma \ref{GS step 7}, there are canonical isomorphisms 
    $$
    H^i(A^{(\nu)}, \mathbb{Q}) \cong H^i(A\times W_{\nu}^t, \mathbb{Q}),
    $$
    for each $\nu\in P(n+1)$ and $t\in A[d(\nu)]$, induced by isomorphisms
    $$
    A\times W_{\nu}^t \longrightarrow A^{(\nu)}
    $$
    which themselves act trivially on cohomology. There is therefore a $\Gamma$-action on $A^{(\nu)}$ which factors through an action of $m\Gamma$. So as a result, 
    $$
\bigoplus_{\nu\in P(n+1)} [H^{i+2|\nu|}(A^{(\nu)}, \mathbb{Q})(|\nu|)\otimes B_{\nu}] \cong \bigoplus_{\nu\in P(n+1)} [H^{i+2|\nu|}(A\times W_{\nu}^0, \mathbb{Q})(|\nu|)\otimes B_{\nu}], 
    $$
    for $B_{\nu}$ the pullback via the map $\Gamma\rightarrow m\Gamma$ of the $m\Gamma$-regular representation. 
    \end{proof}

\end{itemize}

\begin{Ex}
Let $\nu = (n+1)$, so that $d(\nu) = n+1$. The group $\Gamma$ is isomorphic to $A[n+1]$. There is a transitive $\Gamma$-action on the elements $W^{\gamma}_{(n+1)}$ which yields a $\Gamma$-regular representation 
$$
\bigoplus_{\gamma\in \Gamma}H^{2n+2}(A\times W^{\gamma}_{(n+1)}, \mathbb{Q}),
$$
and by the equivariance properties of the isomorphism (\ref{GS}), there is a $\Gamma$-regular representation in the middle cohomology 
$$
H^{2n+2}(A\times Kum_n(A), \mathbb{Q}).
$$
\end{Ex} 

Let us now proceed to the proof of the main results. We start with an easy lemma:

\begin{Lem}\label{smallest factor}
    Let $j>1$ be the smallest factor dividing $n+1$. If $|\nu|>\frac{n+1}{j}$, then $d(\nu) = 1$.
\end{Lem}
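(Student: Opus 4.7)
The plan is to argue the contrapositive: assume $d(\nu) > 1$ and show that $|\nu| \leq (n+1)/j$. Write $\nu$ in the form $n+1 = \nu_1 \cdot 1 + \nu_2 \cdot 2 + \cdots + \nu_r \cdot r$ as in Definition \ref{partition definition}, so $|\nu| = \sum_i \nu_i$ and $d(\nu) = \gcd\{i : \nu_i \neq 0\}$.

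The first observation is that $d(\nu)$ must divide $n+1$ itself. Indeed, $d(\nu)$ divides every index $i$ for which $\nu_i \neq 0$, so it divides the sum $\sum_i i \cdot \nu_i = n+1$. Since $d(\nu) > 1$ by assumption, $d(\nu)$ admits a prime divisor, which is also a prime divisor of $n+1$, hence $\geq j$. In particular $d(\nu) \geq j$.

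The second step is an elementary size estimate. For each $i$ with $\nu_i \neq 0$ we have $i \geq d(\nu)$, so
\[
n+1 \;=\; \sum_{i : \nu_i \neq 0} i \cdot \nu_i \;\geq\; d(\nu) \sum_{i} \nu_i \;=\; d(\nu)\,|\nu|.
\]
Dividing by $d(\nu)$ and using $d(\nu) \geq j$ yields $|\nu| \leq (n+1)/d(\nu) \leq (n+1)/j$, contradicting the hypothesis $|\nu| > (n+1)/j$. Therefore $d(\nu) = 1$, as required. There is no real obstacle here; the content is purely arithmetic and the whole argument should fit in a few lines.
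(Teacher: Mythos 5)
Your proof is correct and follows essentially the same route as the paper: the paper's factorization $n+1 = d(\nu)\sum_i \beta_i$ with $\beta_i = (i/d(\nu))\nu_i \geq \nu_i$ is exactly your estimate $\sum_i i\nu_i \geq d(\nu)\sum_i \nu_i$, followed by the same contradiction with $d(\nu) \geq j$. If anything, your write-up is slightly more complete, since you justify the step $d(\nu) \mid n+1$ and hence $d(\nu) \geq j$, which the paper asserts without comment.
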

\begin{proof}
    Suppose for a contradiction that $d(\nu)>1$. We may write 
    $$
    n+1 = \sum_i i \nu_i = d(\nu)\sum_i\beta_i,
    $$
    where $\beta_i\geq \nu_i$ for each $i$. Recall that $|\nu| = \sum_i \nu_i$, so by assumption 
    $$
    j\sum_i \nu_i > d(\nu)\sum_i\beta_i.
    $$
    However, $d(\nu)\geq j$ and $\sum_i \beta_i \geq \sum_i \nu_i$, therefore we have a contradiction.
\end{proof}

\begin{Rem}
Proposition \ref{Gamma equivariance of isomorphism} and Lemma \ref{smallest factor} together imply that a necessary condition for $\Gamma$ to act trivially on the cohomology 
$$H^{i+2|\nu|}(A^{(\nu)}, \mathbb{Q})(|\nu|)$$ is that $|\nu|> \frac{n+1}{j}$. 
\end{Rem}

\begin{Lem}\label{surjective in degree}
    Let $t\in \mathfrak{M}_{v^{\perp}}^0$ be a point parametrizing the fibers $Y_{t}$ and $\mathcal{M}_{t}$. Let $j>1$ be the smallest prime divisor of $n+1$. The pullback map
    $$
    i^*: H^*(\mathcal{M}_t, \mathbb{Q})\longrightarrow H^*(Y_t, \mathbb{Q})
    $$
    is surjective in degrees $<2(n+1)(j-1)/j$.
\end{Lem}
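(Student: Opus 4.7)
The plan is to reduce the surjectivity of $i^*$ in degree $k$ to the statement that $\Gamma$ acts trivially on $H^k(Y_t, \mathbb{Q})$, and then to apply Proposition \ref{Gamma equivariance of isomorphism} together with Lemma \ref{smallest factor}.

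First, I would observe that the image of $i^*$ coincides with the $\Gamma$-invariant subring $H^*(Y_t, \mathbb{Q})^\Gamma$. By construction of the universal family $\Pi$ in (\ref{definition of family of moduli spaces}) and the deformed form of Lemma \ref{Markman 10.1(3)}, the fiber $\mathcal{M}_t$ is the quotient of $T_\ell \times Y_t$ by the antidiagonal action of $\Gamma$, where $T_\ell$ is the fiber of $\pi : \mathcal{T}\to \Omega_{v^\perp}$ over $\ell = Per(t)$. Since $\Gamma$ acts on the abelian variety $T_\ell$ by translations, its action on $H^*(T_\ell, \mathbb{Q})$ is trivial, and the Künneth formula yields
$$H^*(\mathcal{M}_t, \mathbb{Q}) \cong H^*(T_\ell, \mathbb{Q}) \otimes H^*(Y_t, \mathbb{Q})^\Gamma,$$
exactly as in Lemma \ref{cohomology ring of M}. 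The inclusion $i : Y_t \hookrightarrow \mathcal{M}_t$ as a fiber of the Albanese map then identifies $i^*$ with the projection onto the degree-zero part of $H^*(T_\ell)$, so $\mathrm{Im}(i^*) = H^*(Y_t, \mathbb{Q})^\Gamma$, and surjectivity of $i^*$ in degree $k$ is equivalent to $\Gamma$ acting trivially on $H^k(Y_t, \mathbb{Q})$.

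Next, Lemma \ref{deformation invariant} asserts that $\Gamma$ deforms as a trivial local system over $\mathfrak{M}^0_{v^\perp}$, so parallel transport identifies the $\Gamma$-representation on $H^k(Y_t, \mathbb{Q})$ with the one on $H^k(Kum_n(A), \mathbb{Q})$. It therefore suffices to prove that $\Gamma$ acts trivially on $H^k(Kum_n(A), \mathbb{Q})$ whenever $k < 2(n+1)(j-1)/j$. Since the translation action of $\Gamma \subset A[n+1]$ on $A$ is trivial on cohomology, Künneth reduces this further to showing that $\Gamma$ acts trivially on $H^k(A \times Kum_n(A), \mathbb{Q})$.

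I would then apply Proposition \ref{Gamma equivariance of isomorphism}. Setting $k = i + 2n + 2$, the proposition gives a $\mathbb{Q}[\Gamma]$-module isomorphism
$$H^k(A \times Kum_n(A), \mathbb{Q}) \cong \bigoplus_{\nu \in P(n+1)} H^{k - 2n - 2 + 2|\nu|}(A^{(\nu)}, \mathbb{Q}) \otimes B_\nu,$$
where $\Gamma$ acts trivially on $H^*(A^{(\nu)}, \mathbb{Q})$ and acts on $B_\nu$ through the quotient $\Gamma \to m\Gamma$ with $m = (n+1)/d(\nu)$. When $d(\nu) = 1$, the subgroup $m\Gamma = (n+1)\Gamma$ is trivial, so $B_\nu$ is the one-dimensional trivial representation and the corresponding summand is a trivial $\Gamma$-module. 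A summand is nonzero only when $k - 2n - 2 + 2|\nu| \geq 0$, i.e.\ $|\nu| \geq n+1 - k/2$. The hypothesis $k < 2(n+1)(j-1)/j$ rearranges to $n+1 - k/2 > (n+1)/j$, so every contributing $\nu$ satisfies $|\nu| > (n+1)/j$, and Lemma \ref{smallest factor} then forces $d(\nu) = 1$. Hence $\Gamma$ acts trivially on every nonzero summand, proving the claim.

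The argument is essentially a bookkeeping exercise given the machinery already in place. The main conceptual subtlety is ensuring that the identification $\mathrm{Im}(i^*) = H^*(Y_t, \mathbb{Q})^\Gamma$ persists in the deformed setting, which is precisely what the triviality of the local system in Lemma \ref{deformation invariant} guarantees; the degree bound then drops out of the elementary inequality in Lemma \ref{smallest factor}.
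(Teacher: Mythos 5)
Your proof is correct and follows essentially the same route as the paper: identify $\mathrm{Im}(i^*)$ with $H^*(Y_t,\mathbb{Q})^{\Gamma}$ via the quotient description of $\mathcal{M}_t$ (Lemma \ref{Markman 10.1(3)} and Lemma \ref{cohomology ring of M}), reduce by Lemma \ref{deformation invariant} and the K\"{u}nneth theorem to the $\Gamma$-action on $H^k(A\times Kum_n(A),\mathbb{Q})$, and then combine Proposition \ref{Gamma equivariance of isomorphism} with the vanishing bound $|\nu|\geq n+1-k/2$ and Lemma \ref{smallest factor} to force $d(\nu)=1$, hence trivial $B_{\nu}$, on every contributing summand. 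Your inequality bookkeeping is in fact cleaner than the paper's, whose displayed intermediate inequality $i+\tfrac{2n+2}{j}\leq 0$ has its direction reversed (the necessary condition for a non-$\Gamma$-invariant summand is $i+\tfrac{2n+2}{j}\geq 0$), although the paper's final conclusion agrees with yours.
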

\begin{proof}
 Proposition \ref{Gamma equivariance of isomorphism} and Lemma \ref{smallest factor} together imply that the summands of 
\begin{equation}\label{GS RHS}
\bigoplus_{\nu\in P(n+1)} \bigoplus_{t\in A[d(\nu)]} H^{i+2|\nu|}(A^{(\nu)}, \mathbb{Q})(|\nu|)
\end{equation}
which contribute to the complement of $H^k(A\times Kum_n(A), \mathbb{Q})^{\Gamma}$ in $H^k(A\times Kum_n(A), \mathbb{Q})$ are contained in the collection of those for which $|\nu|\leq \frac{n+1}{j}$.
By Theorem \ref{GS Theorem 7}, a necessary condition for the existence of summands of $H^k(A\times Kum_n(A), \mathbb{Q})$ in the complement of $H^k(A\times Kum_n(A))^{\Gamma}$ is that $|\nu|\geq -i/2$. Theorem \ref{GS Theorem 7} also implies that the contribution of (\ref{GS RHS}) to $H^k(A\times Kum_n(A))^{\Gamma}$ occurs in degree $k = i+2n+2$, so we have $i = k-2n-2$. Combining these inequalities yields the inequality
$$
i+\frac{2n+2}{j}\leq 0,
$$
and substituting $i = k-2n-2$ yields the inequality
$$
k-2n-2+\frac{2n+2}{j}\leq 0.
$$
The latter inequality is hence a necessary condition for the existence of non-$\Gamma$-invariant summands. Therefore, if  
$k<2(n+1)(j-1)/j$, then $H^k(A\times Kum_n(A), \mathbb{Q})$ is $\Gamma$-invariant and by the K\"{u}nneth theorem this implies that $H^{k}(Kum_n(A), \mathbb{Q})$ is $\Gamma$-invariant as well. $H^k(Y_t, \mathbb{Q})$ is therefore $\Gamma$-invariant by Lemma \ref{deformation invariant}.
\end{proof}

We can now prove Theorem \ref{maintheorem}:
\begin{Th}(Theorem \ref{maintheorem})\label{proof of main theorem}
 Let $Y$ be a variety of generalized Kummer deformation type of dimension $2n$ and let $j>1$ be the smallest prime divisor of $n+1$. The Lefschetz standard conjecture holds for $Y$ in degrees $<2(n+1)(j-1)/{j}$.
\end{Th}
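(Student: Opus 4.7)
The plan is to combine the two main technical ingredients assembled in Subsection~\ref{The decomposition theorem and proofs of the main results}, namely Theorem~\ref{LSC for Kummers} and Lemma~\ref{surjective in degree}, to conclude the LSC in the claimed range. First I would fix a point $t\in \mathfrak{M}_{w^{\perp}}^0$ so that the fiber $Y_{t}$ of the universal family $p:\mathcal{Y}\to\mathfrak{M}_{w^{\perp}}^0$ is (deformation equivalent to) the given $Y$, and let $\mathcal{M}_{t}$ denote the corresponding fiber of the universal family $\Pi:\tilde{\mathcal{M}}\to\mathfrak{M}_{w^{\perp}}^0$ of (\ref{universal family}). The LSC is a deformation invariant in this setting (since the algebraic correspondences we construct deform in the family $\Pi$), so it suffices to prove the conclusion for $Y_{t}$.

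Next I would invoke Lemma~\ref{surjective in degree}, which says that the pullback $i^{*}:H^{k}(\mathcal{M}_{t},\mathbb{Q})\to H^{k}(Y_{t},\mathbb{Q})$ induced by the inclusion $Y_{t}\hookrightarrow\mathcal{M}_{t}$ is surjective for all $k<2(n+1)(j-1)/j$. Setting $k_{0}:=\lfloor 2(n+1)(j-1)/j\rfloor-1$ (or the largest integer strictly below the bound), the hypothesis of Theorem~\ref{LSC for Kummers} is then satisfied for this value of $k_{0}$, since surjectivity in a given degree implies surjectivity in all lower degrees via Proposition~\ref{surjectivity of the restriction maps}. Applying Theorem~\ref{LSC for Kummers} directly yields the LSC for $Y_{t}$ in every degree $0\le k\le k_{0}$, i.e. in every degree $k<2(n+1)(j-1)/j$.

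Finally, I would note two cosmetic points to close the argument cleanly. First, the LSC in degree $k$ and in degree $4n-k$ are equivalent for a variety of dimension $2n$, but the bound $2(n+1)(j-1)/j$ is always $\le 2n$ (with equality only when $j=n+1$), so no degrees above the middle need separate treatment in this statement; this is the content of Corollary~\ref{maincorollary}. Second, the LSC in middle degree is automatic for any smooth projective variety, which is used implicitly when $n+1$ is prime. I do not expect any obstacle at this stage: all the real work—constructing the self-adjoint correspondence (Subsection~\ref{self adjoint correspondence}), deforming the hyperholomorphic sheaf $E^{1}$ along twistor-type curves in $\mathfrak{M}_{w^{\perp}}^0$ (Proposition~\ref{twistor family}, Proposition~\ref{algebraicity of the theta class}), the inductive Charles–Markman-style argument (Corollary~\ref{CharlesCorollary}), and the decomposition-theoretic computation (Proposition~\ref{Gamma equivariance of isomorphism}, Lemma~\ref{smallest factor})—has already been done. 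The proof of the theorem is therefore a short citation, and the only care required is to verify that the numerical range in Lemma~\ref{surjective in degree} matches the range claimed in the statement.
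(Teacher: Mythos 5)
Your proposal is correct and follows essentially the same route as the paper: the paper's proof is precisely the two-step citation you give, combining Lemma~\ref{surjective in degree} (surjectivity of $i^{*}:H^{k}(\mathcal{M}_{t},\mathbb{Q})\to H^{k}(Y_{t},\mathbb{Q})$ for $k<2(n+1)(j-1)/j$) with Theorem~\ref{LSC for Kummers} applied via the correspondence (\ref{restriction of f}). Your aside about deformation invariance of the LSC is unnecessary (and not justified as stated), since $Y$ itself already occurs as a fiber $Y_{t}$ of the universal family, which is how the paper proceeds; the remaining remarks about the middle degree and the bound $2(n+1)(j-1)/j\le 2n$ agree with the paper's closing discussion.
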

\begin{proof}
By Lemma \ref{surjective in degree}, the pullback map $i^*: H^k(\mathcal{M}_t, \mathbb{Q})\rightarrow H^k(Y_t, \mathbb{Q})$ is surjective for degrees $k<2(n+1)(j-1)/j$, as the image of $i^*$ is $H^k(Y_t, \mathbb{Q})^{\Gamma}$. Therefore, the Lefschetz standard conjecture is satisfied for $Y_t$ in degree $k$ by Theorem \ref{LSC for Kummers} via the correspondence (\ref{restriction of f}).
\end{proof}

For what remains we shall perform an analysis of the case where $n+1$ is prime. In this case the only partition such that $d(\nu) \neq 1$ is $\nu = (n+1)\in P(n+1)$, for which $|\nu|= 1$, and so $d(\nu) = n+1$. Note that we have 
$$\Gamma\cong A[d(\nu)]\cong A[n+1].$$
We may write the direct summand corresponding to the partition $(n+1)$ on the right-hand-side of (\ref{GS}) as
$$
\bigoplus_{\gamma\in \Gamma} H^{i+2|\nu|}(A\times W_{(n+1)}^{\gamma}, \mathbb{Q})(|\nu|).
$$
Recall that as described in Example \ref{smallest partition}, $b^{-1}(W^{(n+1)}_{(n+1)}) = K^{[n+1]}_{(n+1)}$ is a collection of Lagrangian subvarieties in the middle cohomology of $Kum_n(A)$.
\noindent
It is then clear that in the middle cohomology there are $(n+1)^4$ direct summands in the right-hand side of (\ref{GS}), corresponding to the $(n+1)^4$ torsion points of $A$ which in turn correspond to elements of $\Gamma$, since $\Gamma$ is isomorphic to $A[n+1]$. Indeed, fixing $\nu = (n+1)$, we may rephrase the contents of Proposition \ref{Gamma equivariance of isomorphism} in this case as

\begin{Prop}\label{regular representation}
The direct sum
$$
 \bigoplus_{\gamma\in \Gamma}H^{2n+2}(A\times W_{(n+1)}^{\gamma}, \mathbb{Q})
$$
is a $\Gamma$-regular representation. The equivariance properties of the isomorphism (\ref{GS}) yields a $\Gamma$-regular representation in the middle cohomology of $A\times Kum_n(A)$. What's more, in degree $k \neq 2n+2$, the pullback $\Gamma$-action on $H^k(A\times Kum_n(A), \mathbb{Q})$ is trivial.
\end{Prop}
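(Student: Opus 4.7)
The plan is to specialize Proposition \ref{Gamma equivariance of isomorphism} to the hypothesis that $n+1$ is prime. The crux is the observation noted in the paragraph above the statement: when $n+1$ is prime, the only partition $\nu \in P(n+1)$ with $d(\nu) \neq 1$ is $\nu = (n+1)$ itself, since for any other partition $d(\nu)$ would be a proper divisor of $n+1$ and must therefore equal $1$. For this distinguished partition $d(\nu) = n+1$ and $|\nu| = 1$; with $m = (n+1)/d(\nu) = 1$ the map $\Gamma \to m\Gamma$ is the identity, so the representation $B_{(n+1)}$ of Proposition \ref{Gamma equivariance of isomorphism} is precisely the regular $\Gamma$-representation.

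For the first assertion I would invoke Corollary \ref{representation theory of the cohomology} applied to $\nu = (n+1)$. Since each $W^{\gamma}_{(n+1)}$ is a single point indexed by $\gamma \in A[n+1] \cong \Gamma$, and the diagonal translation action of $\Gamma$ on $\bigsqcup_{\gamma} W^{\gamma}_{(n+1)}$ is simply transitive by Proposition \ref{transitive gamma action}, the direct sum in the statement is canonically identified as a $\mathbb{Q}[\Gamma]$-module with $H^{*}(A) \otimes B_{(n+1)}$, which is a direct sum of copies of the regular representation. The second assertion then follows by transport through the $\Gamma$-equivariant Göttsche--Soergel decomposition (\ref{GS equivariant}): the $\nu=(n+1)$, $i=0$ summand sits inside the middle cohomology $H^{2n+2}(A \times Kum_n(A))$ and contributes $H^{2}(A)(1) \otimes B_{(n+1)}$, which visibly contains the $\Gamma$-regular representation.

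For the final assertion, the argument proceeds by elimination: for every other partition $\nu \neq (n+1)$ we have $d(\nu) = 1$, hence $m\Gamma = \{0\}$ and $B_\nu$ is the pullback via $\Gamma \to \{0\}$ of the (one-dimensional) regular representation of the trivial group, which is the trivial $\Gamma$-representation. Consequently the only potentially non-invariant contribution to any graded piece of $H^{i+2n+2}(A \times Kum_n(A))$ comes from the $\nu = (n+1)$ summand $H^{i+2}(A)(1) \otimes B_{(n+1)}$. Combining this with the Künneth isomorphism $H^k(A \times Kum_n(A)) \cong \bigoplus_{a+b=k} H^a(A) \otimes H^b(Kum_n(A))$, together with the triviality of the translation $\Gamma$-action on $H^*(A)$, one then locates precisely the degrees in which a non-trivial $\Gamma$-action can appear.

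The main technical obstacle is the bookkeeping between the $i$-grading on the right-hand side of (\ref{GS}), the $|\nu|$-shift and Tate twist, and the Künneth decomposition along the $A$-factor. Carefully tracking these indices is what pins down the cohomological degrees in which the regular representation can occur; all other degrees then automatically carry only $\Gamma$-invariant classes.
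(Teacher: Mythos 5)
Your overall route coincides with the paper's: the printed proof is a two-line appeal to Proposition \ref{transitive gamma action} and Proposition \ref{Gamma equivariance of isomorphism}, and your expansion of the first two assertions --- only $\nu=(n+1)$ has $d(\nu)\neq 1$ when $n+1$ is prime, $m=1$ makes $B_{(n+1)}$ the regular representation, all other $B_{\nu}$ are pulled back from the trivial group --- is exactly the intended specialization. (One cosmetic caveat: read literally, $H^{2n+2}(A\times W^{\gamma}_{(n+1)},\mathbb{Q})=0$ for $n>1$, since each $W^{\gamma}_{(n+1)}$ is a single point and $A\times W^{\gamma}_{(n+1)}\cong A$; the object meant, and the one you in fact use, is the $i=0$ summand $H^{2}(A\times W^{\gamma}_{(n+1)},\mathbb{Q})(1)$ of (\ref{GS}) contributing to the middle degree. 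You silently slide to ``$H^{*}(A)\otimes B_{(n+1)}$,'' which papers over this mismatch rather than flagging it.)

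The genuine gap is in the third assertion, precisely the step you postpone as ``bookkeeping'': carried out, that bookkeeping does not deliver triviality in all degrees $k\neq 2n+2$. In (\ref{GS equivariant}) the $\nu=(n+1)$ block is $H^{i+2}(A,\mathbb{Q})(1)\otimes B_{(n+1)}$, which is nonzero for every $i$ with $-2\leq i\leq 2$, so non-$\Gamma$-invariant classes appear in $H^{k}(A\times Kum_n(A),\mathbb{Q})$ for every $k$ in the range $2n\leq k\leq 2n+4$, not only for $k=2n+2$. What the K\"{u}nneth comparison you invoke actually yields (matching dimensions degree by degree, using triviality of the $\Gamma$-action on $H^{*}(A)$) is that the non-invariant part of $H^{*}(Kum_n(A),\mathbb{Q})$ is concentrated in the middle degree $2n$, of dimension $(n+1)^4-1$; but then $H^{0}(A)\otimes H^{2n}(Kum_n(A))\subset H^{2n}(A\times Kum_n(A))$ already carries a non-trivial action, and $2n\neq 2n+2$ --- concretely, for $n+1=3$ the $80$ permuted classes in $H^{4}(Kum_2(A))$ give non-invariant classes in $H^{4}(A\times Kum_2(A))$ while $2n+2=6$. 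So your proposed argument, completed honestly, proves the statement in the form the paper actually uses afterwards (triviality of the $\Gamma$-action on $H^{k}(Kum_n(A))$ for $k\neq 2n$, with the regular representation appearing in degree $2n$), but it cannot establish the literal claim about $H^{k}(A\times Kum_n(A))$ for all $k\neq 2n+2$; you would need to identify and repair this discrepancy rather than defer it --- a point that the paper's own one-line proof glosses over as well.
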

\begin{proof}
Since $\Gamma$ acts transitively on itself by translation, the result follows from Proposition \ref{transitive gamma action} and Proposition \ref{Gamma equivariance of isomorphism}.
\end{proof}

\begin{Rem}
Let $Y_t$ be a variety of generalized Kummer type parametrized by a point $t\in\mathfrak{M}_{w^{\perp}}^0$, and assume that $Y_t$ is of dimension $2n$ for which $n+1$ is prime. By Lemma \ref{surjective in degree}, the image of $i^{*}: H^k(\mathcal{M}_t, \mathbb{Q})\rightarrow H^k(Y_t, \mathbb{Q})$ is surjective in all degrees $k<2n$. In the middle degree $k = 2n$, Theorem \ref{GS Theorem 7} and Proposition \ref{regular representation} imply that $H^{2n}(Y_t, \mathbb{Q})$ is spanned by a $\Gamma$-regular representation. The prior classes are $\Gamma$-invariant, by Corollary \ref{generate translation invariant subring}. The cycles that appear in the $\Gamma$-regular representation satisfy the LSC in the middle cohomological degree via the correspondence $[\Delta]^*$, where $\Delta\subset Y_t\times Y_t$ is the class of the diagonal. The correspondence $[\mathcal{Z}]$ in the formulation of the LSC is the class $[\Delta]\in CH^*(Y_t\times Y_t)$ in middle cohomological degree and the class of the self-adjoint correspondence (\ref{restriction of f}) in degrees $<2n$.
\end{Rem}

For $\nu$ any partition besides $(n+1)$, the direct summand $H^i(A\times W_{\nu}^{(n+1)}, \mathbb{Q})$ is $\Gamma$-invariant by Proposition \ref{Gamma equivariance of isomorphism}, so Corollary \ref{maincorollary} follows immediately as a result of Theorem \ref{maintheorem}.

\vspace{5mm}

\textbf{Acknowledgements:} It is my pleasure to acknowledge my advisor Eyal Markman for many enlightening conversations and for all of his support. I graciously thank Tom Braden for helpful conversations on intersection cohomology. I thank Paul Hacking and Katrina Honigs for useful discussions regarding this project and Wendelin Lutz for reading a previous version and providing comments. I sincerely thank the anonymous referees for their helpful comments and suggestions.

\end{document}